\newcommand{\opnorm}{\@ifstar\@opnorms\@opnorm}
\newcommand{\@opnorms}[1]{%
  \left|\mkern-1.5mu\left|\mkern-1.5mu\left|
   #1
  \right|\mkern-1.5mu\right|\mkern-1.5mu\right|
}
\newcommand{\@opnorm}[2][]{%
  \mathopen{#1|\mkern-1.5mu#1|\mkern-1.5mu#1|}
  #2
  \mathclose{#1|\mkern-1.5mu#1|\mkern-1.5mu#1|}
}
\newtheorem{lemma}{Lemma}[section]
\newtheorem{prop}[lemma]{Proposition}
\newtheorem{theorem}[lemma]{Theorem}
\newtheorem{cor}[lemma]{Corollary}
\numberwithin{equation}{section}
\theoremstyle{remark}
\newtheorem{rem}[lemma]{Remark}
\theoremstyle{definition}
\newtheorem{exam}[lemma]{Example}
\newcommand{\lge}{\langle}
\newcommand{\rge}{\rangle}
\newcommand{\ax}{\mathcal{A}}
\newcommand{\bx}{\mathcal{B}}
\newcommand{\fx}{\mathcal{F}}
\newcommand{\gx}{\mathcal{G}}
\newcommand{\hx}{\mathcal{H}}
\newcommand{\kx}{\mathcal{K}}
\newcommand{\lx}{\mathcal{L}}
\newcommand{\mx}{\mathcal{M}}
\newcommand{\nx}{\mathcal{N}}
\newcommand{\rx}{\mathcal{R}}
\newcommand{\nz}{\mathbb{N}}
\newcommand{\rz}{\mathbb{R}}
\newcommand{\cz}{\mathbb{C}}
\newcommand{\ez}{\mathbb{E}}
\newcommand{\pz}{\mathbb{P}}
\newcommand{\Ga}{\Gamma}
\newcommand{\Om}{\Omega}
\newcommand{\al}{\alpha}
\newcommand{\de}{\delta}
\newcommand{\si}{\sigma}
\newcommand{\ga}{\gamma}
\newcommand{\la}{\lambda}
\newcommand{\ta}{\theta}
\newcommand{\eps}{\varepsilon}
\newcommand{\8}{\infty}
\mathchardef\dash="2D
\newcommand{\Aut}{\operatorname{Aut}}
\newcommand{\Ent}{\operatorname{Ent}}
\newcommand{\Lip}{\operatorname{Lip}}
\newcommand{\Fix}{\operatorname{Fix}}
\title[Poincar\'e and transportation cost inequalities]{Poincar\'e type inequalities for group measure spaces and related transportation cost inequalities}
\author{Qiang Zeng}
\date{\today}
\address{Department of Mathematics, University of Illinois, Urbana, IL 61801}
\email{zeng8@illinois.edu}
\subjclass[2010]{46L53, 60E15, 47D06}
\keywords{Poincar\'e type inequalities, transportation cost inequalities, noncommutative $L_p$ spaces, group measure spaces, Wiener chaos, Rademacher chaos, quantum metric spaces}
\begin{document}

\begin{abstract}
Let $G$ be a countable discrete group with an orthogonal representation $\al$ on a real Hilbert space $H$. We prove $L_p$ Poincar\'e inequalities for the group measure space $L_\8(\Om_H,\ga)\rtimes G$, where both the group action and the Gaussian measure space $(\Om_H, \ga)$ are associated with the representation $\al$. The idea of proof comes from Pisier's method on the boundedness of Riesz transform and Lust-Piquard's work on spin systems. Then we deduce a transportation type inequality from the $L_p$ Poincar\'e inequalities in the general noncommutative setting. This inequality is sharp up to a constant (in the Gaussian setting). Several applications are given, including Wiener/Rademacher chaos estimation and new examples of Rieffel's compact quantum metric spaces.
\end{abstract}
\maketitle
\section{Introduction}
Gross' logarithmic Sobolev inequality (LSI) \cite{Gro} is a powerful tool and  has many applications in different areas of mathematics. Based on LSI, Bobkov and G\"otze proved the exponential integrability and the transportation cost inequality for 1-Wasserstein distance in \cite{BG}. It was proved later that LSI implies a stronger transportation cost inequality for 2-Wasserstein distance by Otto--Villani \cite{OV} and Bobkov--Gentil--Ledoux \cite{BGL}. It is well known now that the transportation cost inequalities imply concentration inequalities due to the work of Marton \cite{Mar}; see \cite{BG} and the references therein for more details in this direction. On the other hand, it is known that the $L_p$ Poincar\'e type inequalities also imply concentration phenomena. In \cite{ELP}, Efraim and Lust-Piquard proved the following $L_p$ Poincar\'e inequalities with constant $C\sqrt{p}$ for $2\le p<\8$ for the Walsh system
\[
\|f-\ez f\|_p\le C\sqrt{p} \| |\nabla f|\|_p
\]
where $\nabla f$ is the discrete gradient. Similar results hold for CAR algebras. Here and in what follows $C,C',C_1,c,c_1,etc.$ are absolute constants which may vary from line to line.  In the noncommutative setting, the $L_p$ Poincar\'e type inequalities with constant $Cp$ were proved for certain semigroups acting on finite von Neumann algebras under Bakry--Emery's $\Ga_2$-criterion in \cite{JZ}. The constant $C\sqrt{p}$ was obtained only with $L_\8$ norm on the right-hand side. The aim of this paper is to give consequences when the constant is $C\sqrt{p}$ and to provide more examples of this situation. We first prove the $L_p$ Poincar\'e inequalities for group (Gaussian) measure spaces, and then show that $L_p$ Poincar\'e inequalities with constant $C\sqrt{p}$ imply a transportation type inequality in general noncommutative setting. The latter seems to be new even in the commutative case. Moreover, it was observed in \cite{JZ} that LSI may fail in the non-diffusion setting but the Poincar\'e type inequalities still hold. This suggests that Poincar\'e type inequalities may be a simpler but more universal approach to transportation and concentration inequalities even though they usually provide less good constants compared with LSI. 

Unless specified otherwise, we consider a noncommutative probability space $(\nx, \tau)$ where $\nx$ is  a finite von Neumann algebra and $\tau$ a normal faithful tracial state. Then the noncommutative $L_p$ space $L_p(\nx,\tau)$ is the completion of $\nx$ with respect to $\|f\|_p=\tau[(f^*f)^{p/2}]^{1/p}$ for $0<p<\8$ and $\|f\|_\8=\|f\|$. Here and in the following $\|\cdot\|$ denotes the operator norm. It is well known that $L_p(\nx,\tau)$ is a Banach space for $1\le p \le \8$. For example, for a classical probability space $(\Om, \fx, \pz)$, we may take $\nx=L_\8(\Om, \pz)$ and $\tau(f)=\ez(f)=\int f d\pz$ for $f\in\nx$. Then $L_p(\nx,\tau)=L_p(\Om,\pz)$. Let $(T_t)_{t\ge0}$ be a \emph{standard} semigroup acting on $\nx$ with generator $A$, i.e., $T_t=e^{-tA}$. Following \cites{JMe, JZ}, a standard semigroup $(T_t)$ is a pointwise $\si$-weak (weak$*$) continuous semigroup such that every $T_t$ is  normal unital completely positive and self-adjoint on $L_2(\nx,\tau)$. The standard semigroup is a noncommutative analogue of a symmetric Markov semigroup in classical probability theory. Then the gradient form associated to $A$ (Meyer's ``carr\'e du champs'') is defined as
\[
\Ga_A(f_1,f_2)=\frac12[A(f_1^*)f_2+f_1^*A(f_2)-A(f_1^*f_2)]
\]
for $f_1,f_2$ in the domain of $A$. Let ${\rm Fix}=\{x\in \nx:T_t x=x\}$ be the fixed point algebra of $T_t$. It was shown in \cite{JX07} that ${\rm Fix}$ is a von Neumann subalgebra of $\nx$, thus there exists a unique conditional expectation $E_{\Fix}: \nx\to {\rm Fix}$. In this paper, we are interested in the following Poincar\'e type inequalities: for $2\le p<\8$
\begin{equation}\label{poin}
      \|x-E_{\rm Fix} x\|_p\le C\sqrt{p} \max\{ \|\Ga_A(x,x)^{1/2}\|_p,\|\Ga_A(x^*,x^*)^{1/2}\|_p\},
  \end{equation}
for $x\in\nx$.

In the first part of this paper, we consider a countable discrete group $G$ with an orthogonal representation $\al$ on a real Hilbert space $H$. From here we can construct a group measure space $L_\8(\rz^d,\ga_d)\rtimes_{\hat\al} G$, on which there is a canonical trace. Here $d$ is the dimension of $H$ and $\ga_d$ is the canonical product Gaussian measure. Put $\ell_2(G)={\rm span}\{\de_g: g\in G\}$ where $\de_g$ is the unit vector with $\de_g(g)=1$ and $\de_g(h)=0$ for $h\neq g$. Note that $L_\8(\rz^d,\ga_d)\rtimes_{\hat\al} G$ is a von Neumann subalgebra of $L_\8(\rz^d,\ga_d)\overline\otimes B(\ell_2(G))$. Here and in what follows $B(H)$ denotes the algebra of all bounded linear operators on a Hilbert space $H$. The construction of $L_\8(\rz^d,\ga_d)\rtimes_{\hat\al} G$ and other preliminary materials will be recalled in Section 2.

Consider the Ornstein--Uhlenbeck semigroup $(P_t)$ acting on $L_\8(\rz^d,\ga_d)$. Then $P_t\otimes id_{\ell_2(G)}$ acts on $L_\8(\rz^d,\ga_d)\overline\otimes B(\ell_2(G))$. By the construction of $L_\8(\rz^d,\ga_d) \rtimes_{\hat\al} G$, $P_t\otimes id_{\ell_2(G)}$ restricted to $L_\8(\rz^d,\ga_d) \rtimes_{\hat\al} G$ is a well-defined semigroup, and we denote the restriction by $T_t$. Then it is easy to check that $(T_t)$ is a standard semigroup on $L_\8(\rz^d,\ga_d) \rtimes_{\hat\al} G$ and $E_{\rm Fix} = E_{\lx(G)}$, where $\lx(G)$ is the group von Neumann algebra of $G$ constructed as follows. Let $\la: G\to B(\ell_2(G))$ be the left regular representation. Then the group von Neumann algebra $\lx(G)$ is the closure of linear span of $\la(G)$ in the weak operator topology. It is well known that $\lx(G)$ is a subalgebra of $L_\8(\rz^d,\ga_d) \rtimes_{\hat\al} G$ and admits a canonical normal faithful tracial state given by $\tau(f)=\lge \de_e,f\de_e\rge $ for $f\in\lx(G)$, where $e$ is the identity element of $G$. To be more specific, a generic element of $\lx(G)$ can be written as a Fourier series $f=\sum_g\hat f(g)\la(g)$. Then $\tau(f)=\hat f(e)$. Our first main result is the $L_p$ Poincar\'e type inequalities for $L_\8(\rz^d,\ga_d)\rtimes_{\hat\al} G$.

\begin{theorem}\label{gpp0}
  Let $G$ be a countable discrete group with an orthogonal representation $\al$ on a real Hilbert space $H$ of dimension $d\in\nz\cup \{\8\}$. Let $T_t$ act on $L_\8(\rz^d,\ga_d)\rtimes_{\hat\al} G$ as above, where $\hat\al$ is determined by $\al$. Then for $2\le p <\8$ and $f\in L_\8(\rz^d,\ga_d)\rtimes_{\hat\al} G$,  we have
  \begin{equation}\label{poi0}
      \|f-E_{\lx(G)} f\|_p\le C\sqrt{p} \max\{ \|\Ga_{L\rtimes I}(f,f)^{1/2}\|_p,\|\Ga_{L\rtimes I}(f^*,f^*)^{1/2}\|_p\}.
  \end{equation}
  Here $L\rtimes I$ is the generator of $T_t$.
\end{theorem}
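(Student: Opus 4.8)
The plan is to reduce \eqref{poi0} to a noncommutative reverse Riesz-transform estimate and then run Pisier's rotation argument, with the constant $\sqrt p$ produced by the noncommutative Khintchine inequality exactly as in Lust-Piquard's treatment of spin systems.

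I would first make the generator and the gradient form explicit. Since $T_t$ is the restriction of $P_t\otimes \mathrm{id}_{\ell_2(G)}$, its generator $L\rtimes I$ is the restriction of $N\otimes\mathrm{id}$, where $N$ is the number operator of the Ornstein--Uhlenbeck semigroup. The fact that the action $\hat\al$, the semigroup $P_t$, and the Malliavin directional derivatives $\partial_i$ are all governed by the single representation $\al$ is what lets me lift each $\partial_i$ to a closable derivation on $L_\8(\rz^d,\ga_d)\rtimes_{\hat\al}G$ that annihilates $\lx(G)$ and intertwines as $\partial_i T_t=e^{-t}T_t\partial_i$. A direct computation then gives $N\otimes\mathrm{id}=\sum_i\partial_i^*\partial_i$ on the crossed product, together with $\Ga_{L\rtimes I}(f,f)=\sum_i(\partial_i f)^*(\partial_i f)$ and $\Ga_{L\rtimes I}(f^*,f^*)=\sum_i(\partial_i f)(\partial_i f)^*$. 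Thus the two terms on the right of \eqref{poi0} are precisely the column norm $\big\|(\sum_i(\partial_i f)^*(\partial_i f))^{1/2}\big\|_p$ and the row norm $\big\|(\sum_i(\partial_i f)(\partial_i f)^*)^{1/2}\big\|_p$ of the gradient $\de f=(\partial_i f)_i$, and \eqref{poi0} becomes a statement about controlling $\|f-E_{\lx(G)}f\|_p$ by $\sqrt p$ times the maximum of these two quantities.

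Next I would set up an integral representation in the spirit of Pisier. Because $E_{\lx(G)}f=\lim_{t\to\8}T_t f$ and $\tfrac{d}{dt}T_t=-(N\otimes\mathrm{id})T_t$, integrating yields
\[
f-E_{\lx(G)}f=\int_0^\8(N\otimes\mathrm{id})T_tf\,dt=\sum_i\partial_i^*\Big(\int_0^\8 e^{-t}T_t(\partial_i f)\,dt\Big),
\]
where the second equality uses $N\otimes\mathrm{id}=\sum_i\partial_i^*\partial_i$ and the intertwining relation. Equivalently $f-E_{\lx(G)}f=\de^*(K\,\de f)$ with the bounded multiplier $K=\int_0^\8 e^{-t}T_t\,dt=(N\otimes\mathrm{id}+I)^{-1}$, so matters reduce to bounding a divergence $\de^*$ applied to a semigroup-smoothed gradient. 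This is exactly the object handled by the reverse Riesz inequality. I would then invoke Pisier's method: using the Mehler representation $P_tg(x)=\int g(e^{-t}x+\sqrt{1-e^{-2t}}\,y)\,d\ga_d(y)$ together with Gaussian integration by parts in $y$, the operators $\partial_i^* T_t$ get rewritten as integration against the auxiliary Gaussian variables $y_i$. Performing this inside the crossed product rewrites $\de^*(K\de f)$ as an average, over $t$ and over the auxiliary Gaussian space, of sums of the form $\sum_i y_i\,(\partial_i f)(\cdots)$. Applying the noncommutative Khintchine inequality to the Gaussian family $(y_i)_i$ — whose optimal constant is of order $\sqrt p$ and whose $L_p$ norm is the maximum of the row and column square functions — bounds this average by $C\sqrt p$ times $\max\{\|\de f\|_{\mathrm{col}},\|\de f\|_{\mathrm{row}}\}$. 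This is precisely the mechanism producing both the constant $\sqrt p$ and the maximum of the two $\Ga$-terms. Finally I would note that uniformity is automatic: since $T_t$ acts as the identity on $B(\ell_2(G))$ and each $\partial_i$ is an $\lx(G)$-bimodule derivation, every estimate tensorizes trivially over the group leg and is independent of $\dim H$, so the bound holds with an absolute constant for arbitrary $G$ and for $d=\8$.

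I expect the hardest part to be making Pisier's rotation and Gaussian integration by parts rigorous at the operator level inside the crossed product — realizing $\partial_i^* T_t$ as multiplication by the auxiliary Gaussian variables while respecting the $\hat\al$-twist and the closability of the derivations — and then extracting the sharp constant $\sqrt p$ (rather than the generic $p$) from the noncommutative Khintchine step uniformly in both $d$ and $G$. The algebraic compatibility furnished by the single representation $\al$ is what keeps the derivations well defined and the whole argument dimension-free, and it is the feature I would lean on most heavily.
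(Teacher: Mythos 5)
Your strategy is the paper's own: represent $f-E_{\lx(G)}f$ as an integral of the derivative of the Mehler semigroup, which turns the difference into an average (over a time/angle parameter and an auxiliary Gaussian variable $y$) of rotated gradients paired with $y$, and then apply the Gaussian noncommutative Khintchine inequality with constant $C\sqrt p$, identifying the row and column square functions with the two $\Ga_{L\rtimes I}$-terms. The paper parametrizes the semigroup by $\cos^L\ta=P_{-\ln\cos\ta}$ and integrates over $[0,\pi/2]$ instead of writing $f-E_{\lx(G)}f=\de^*(N\otimes\mathrm{id}+I)^{-1}\de f$ and integrating by parts, but these are two presentations of the same Pisier-type computation.

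Two of your intermediate claims, however, are false as stated and would need the paper's bookkeeping to repair. First, the coordinate derivatives $\partial_i$ do \emph{not} lift to $\lx(G)$-bimodule derivations on the crossed product: since $\la(g)(\xi\rtimes\la(h))=\hat\al_g(\xi)\rtimes\la(gh)$ and, by the chain rule (Lemma \ref{eqv1}), $\partial_i(\hat\al_g\xi)(x)=\lge(\nabla\xi)(\al_{g^{-1}}x),\al_{g^{-1}}e_i\rge$, the operator $\partial_i$ commutes with the left $\lx(G)$-action only when $\al_g e_i=e_i$. Only the \emph{full} gradient, paired with the auxiliary Gaussian vector $y$ and viewed inside $L_\8(\rz^{2d},\ga_{2d})\rtimes G$ with the \emph{diagonal} action, is equivariant; this is exactly what Lemmas \ref{eqv1} and \ref{diff} and the identity \eqref{minu} arrange. (Your basis-independent identities $N\otimes\mathrm{id}=\sum_i\partial_i^*\partial_i$ and $\Ga_{L\rtimes I}(f,f)=\sum_i(\partial_if)^*(\partial_if)$ do survive, because summing over $i$ reconstitutes an $\al$-invariant inner product.) Second, and for the same reason, the Khintchine step does not ``tensorize trivially over the group leg'': what is needed is the conditional row/column Khintchine inequality for crossed products (Theorem \ref{khin}), taken with respect to the conditional expectation $E_x$ onto $L_\8^x(\rz^d,\ga_d)\rtimes G$, and it is precisely the computation of $E_x(F^*F)$ and $E_x(FF^*)$ --- which carries the twist $\hat\al_{g^{-1}}$ --- that produces $\Ga_{L\rtimes I}(f,f)$ and $\Ga_{L\rtimes I}(f^*,f^*)$ as in \eqref{gax}. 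You correctly flag the $\hat\al$-twist as the crux, and the fix is exactly the paper's argument, but the proof as proposed would not go through for a nontrivial orthogonal representation without replacing the individual $\partial_i$'s by the equivariant gradient-times-$y$ object and the plain Khintchine inequality by its conditional crossed-product version.
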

We always understand $\|\Ga_A(f,f)^{1/2}\|_p=\8$ if $\Ga_A(f,f)$ is not well-defined. By density, it is easily seen that \eqref{poi0} still holds for unbounded $f\in L_p(L_\8(\rz^d,\ga_d)\rtimes_{\hat\al} G)$. To illustrate Theorem \ref{gpp0} for readers from more classical probability background, let us consider $G$ to be a finite group with $|G|=d$. Let $\al$ be an orthogonal representation of $G$ on $\rz^d$. Then $L_\8(\rz^d,\ga_d)\rtimes_{\hat\al} G\subset L_\8(\rz^d,\ga_d)\overline\otimes M_d=L_\8(\rz^d,\ga_d; M_d)$, where $M_d$ denotes the algebra of complex $d\times d$ matrices. Hence, we have the $L_p$ Poincar\'e inequalities for matrix-valued Gaussian functions in a certain subspace of $L_p(\rz^d,\ga_d; M_d)$. The proof is harder than the scalar-valued case because of the noncommutativity. This result will be proved in Section 3.

We may also consider certain Poincar\'e type inequalities for $\lx(G)$. Let $\psi$ be a conditional negative length (cn-length for short) function on $G$. Then it is well known that $\psi$ determines an orthogonal representation $\al$ on a real Hilbert space $H$ and a map $b_\psi:G\to H$ satisfying the cocycle law $b_\psi(gh)=b_\psi(g)+\al_g(b_\psi(h))$. $b_\psi$ is called a 1-cocycle on $G$. From here we can construct the group measure space $L_\8(\rz^d,\ga_d)\rtimes_{\hat\al} G$. Consider the semigroup $S_t$ acting on $\lx(G)$ defined by $S_t\la(g) = e^{-t\psi(g)}\la(g)$
for $g\in G$. Then $(S_t)$ is a standard semigroup. See \cite{JZ} for a proof of this fact. It extends
to a strongly continuous semigroup of contractions on $L_2(\lx(G))$. The generator is
given by $A\la(g) = \psi (g)\la(g)$. The following result can also be regarded as Poincar\'e type inequalities for $\lx(G)$.
\begin{cor}
  Let $G$ be a discrete group with cn-length function $\psi$. Let $2\le p <\8$. Then for $f\in\lx(G)$,
  \[
  \|\pi(f)-S_{1/2}(f)\|_{L_p(L_\8(\rz^d,\ga_d)\rtimes_{\hat\al} G)}\le C\sqrt{p} \max\{ \|\Ga_\psi(f,f)^{1/2}\|_{L_p(\lx(G))}, \|\Ga_\psi(f^*,f^*)^{1/2}\|_{L_p(\lx(G))}\}.
  \]
  Here $\pi: \lx(G)\to L_\8(\rz^d,\ga_d)\rtimes_{\hat\al} G$ is a trace preserving $*$-homomorphism given by $\pi(\la(g))=e^{i\lge b_\psi(g),\cdot\rge }\rtimes \la(g)$ and $b_\psi$ is the 1-cocycle determined by $\psi$.
\end{cor}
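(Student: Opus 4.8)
The plan is to deduce the corollary by applying Theorem \ref{gpp0} to the single element $\pi(f)\in L_\8(\rz^d,\ga_d)\rtimes_{\hat\al} G$ and then translating both sides of \eqref{poi0} back to $\lx(G)$ through $\pi$. Throughout I would write $\xi_g=e^{i\lge b_\psi(g),\cdot\rge}$ for the associated Gaussian plane wave, a unit-modulus (hence bounded) element of $L_\8(\rz^d,\ga_d)$, so that $\pi(\la(g))=\xi_g\rtimes\la(g)$, and I would use freely the standard facts $\psi(g)=\|b_\psi(g)\|_H^2$, $\psi(g^{-1})=\psi(g)$, the cocycle law $b_\psi(g^{-1}h)=b_\psi(g^{-1})+\al_{g^{-1}}(b_\psi(h))$ and its consequence $b_\psi(g^{-1})=-\al_{g^{-1}}(b_\psi(g))$.

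First I would identify the left-hand side. Since $E_{\lx(G)}$ averages out the Gaussian variables, $E_{\lx(G)}(\xi_g\rtimes\la(g))=\ez[\xi_g]\,\la(g)$, and a direct Gaussian computation gives $\ez[\xi_g]=e^{-\|b_\psi(g)\|_H^2/2}=e^{-\psi(g)/2}$. Hence $E_{\lx(G)}\pi(\la(g))=e^{-\psi(g)/2}\la(g)=S_{1/2}\la(g)$, and by linearity and normality $E_{\lx(G)}\pi(f)=S_{1/2}(f)$ for every $f\in\lx(G)$; this is precisely where the value $1/2$ in the statement comes from. Thus the left-hand side of \eqref{poi0} applied to $\pi(f)$ is exactly $\|\pi(f)-S_{1/2}(f)\|_p$.

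The main step is to establish the intertwining of gradient forms
\[
\Ga_{L\rtimes I}\big(\pi(\la(g)),\pi(\la(h))\big)=\lge b_\psi(g),b_\psi(h)\rge\,\pi(\la(g^{-1}h))=\pi\big(\Ga_\psi(\la(g),\la(h))\big),
\]
where the last equality uses the elementary identity $\Ga_\psi(\la(g),\la(h))=\frac12[\psi(g)+\psi(h)-\psi(g^{-1}h)]\la(g^{-1}h)=\lge b_\psi(g),b_\psi(h)\rge\la(g^{-1}h)$. To obtain the first equality I would compute the action of $L\rtimes I$ on plane waves, using the explicit action of the Ornstein--Uhlenbeck generator $L$ on Gaussian plane waves, which yields $(L\rtimes I)\pi(\la(g))=(\psi(g)+i\lge b_\psi(g),\cdot\rge)\,\pi(\la(g))$: the generator acts by the diagonal scalar $\psi(g)$ plus a first-order cocycle term. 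Substituting the three summands of $\Ga_{L\rtimes I}(\pi(\la(g)),\pi(\la(h)))=\frac12[(L\rtimes I)(\pi(\la(g))^*)\pi(\la(h))+\pi(\la(g))^*(L\rtimes I)\pi(\la(h))-(L\rtimes I)(\pi(\la(g))^*\pi(\la(h)))]$ into the crossed-product rule $\la_k x=\hat\al_k(x)\la_k$ and repeatedly applying the cocycle law, the purely imaginary linear terms $\lge b_\psi(\cdot),\cdot\rge$ all cancel, leaving the scalar $\frac12[\psi(g)+\psi(h)-\psi(g^{-1}h)]=\lge b_\psi(g),b_\psi(h)\rge$ times $\pi(\la(g^{-1}h))$. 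Extending by sesquilinearity and density gives $\Ga_{L\rtimes I}(\pi(f),\pi(f))=\pi(\Ga_\psi(f,f))$ and likewise for $f^*$.

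Finally I would invoke that $\pi$, being a trace-preserving unital $*$-homomorphism, extends to an isometry $L_p(\lx(G))\hookrightarrow L_p(L_\8(\rz^d,\ga_d)\rtimes_{\hat\al} G)$ for every $p$ and commutes with the positive square root, so that $\|\Ga_{L\rtimes I}(\pi(f),\pi(f))^{1/2}\|_p=\|\pi(\Ga_\psi(f,f)^{1/2})\|_p=\|\Ga_\psi(f,f)^{1/2}\|_{L_p(\lx(G))}$, and the same with $f^*$. Plugging these identifications, together with $E_{\lx(G)}\pi(f)=S_{1/2}(f)$, into the inequality \eqref{poi0} of Theorem \ref{gpp0} applied to $\pi(f)$ yields exactly the claimed bound. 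I expect the gradient-form computation to be the main obstacle: one must track carefully how $\la$ conjugates the Gaussian variables through $\hat\al$ and verify that the first-order cocycle contributions to $L\rtimes I$ cancel in the carr\'e du champ, which is precisely where the cocycle law and the orthogonality of $\al$ are used.
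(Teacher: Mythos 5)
Your proposal is correct, but it takes a genuinely different route from the paper. The paper does \emph{not} invoke Theorem \ref{gpp0} as a black box; instead it re-runs the Mehler-formula argument of Theorem \ref{gmpo} for the specific coefficients $f_g e^{i\lge b_\psi(g),\cdot\rge}$ (using Lemma \ref{fini} and Lemma \ref{diff}), arrives at the identity
\[
S_{1/2}f-\pi(f)=iE_{L_\8^x(\rz^d,\ga_d)\rtimes G}\Big[\int_0^{\pi/2}(\rx_\ta\otimes id_{\ell_2(G)})(\pi^x\de_\psi^y(f))\,d\ta\Big],
\]
deduces $\|\pi(f)-S_{1/2}f\|_p\le\frac{\pi}{2}\|\de_\psi f\|_p$, and then applies the Khintchine inequality \eqref{khgp} to the Gaussian derivation $\de_\psi f$. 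You instead apply \eqref{poi0} directly to $\pi(f)$ and prove the intertwining $\Ga_{L\rtimes I}(\pi(f),\pi(f))=\pi(\Ga_\psi(f,f))$; your computation checks out: $L\xi_g=(\psi(g)+i\lge b_\psi(g),\cdot\rge)\xi_g$, the drift term is first order so the imaginary linear terms cancel in the carr\'e du champ, and the cocycle identity $\al_{g^{-1}}(b_\psi(h)-b_\psi(g))=b_\psi(g^{-1}h)$ turns $\hat\al_{g^{-1}}(\bar\xi_g\xi_h)$ into $\xi_{g^{-1}h}$, giving exactly $\pi(\Ga_\psi(\la(g),\la(h)))$. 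Since $\pi$ is a trace-preserving $*$-homomorphism it commutes with positive square roots and is an $L_p$-isometry, so the right-hand sides match, and $E_{\lx(G)}\pi(f)=S_{1/2}f$ matches the left-hand sides. Two small points to make explicit: you need $\pi(f)^*=\pi(f^*)$ to handle the row term, and you should note that for finitely supported $f$ the coefficients $c_g\xi_g$ are bounded, $C^1$, with bounded gradient and (by Lemma \ref{fini}) depend on finitely many coordinates, so they fall within the class for which Theorem \ref{gpp0} was established. What each approach buys: yours is a cleaner derivation of the corollary from the theorem via a structural identity; the paper's yields the intermediate bound by $\frac{\pi}{2}\|\de_\psi f\|_p$ in terms of the Gaussian derivation, which is of independent use and sidesteps any discussion of the domain of $L\rtimes I$. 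Both ultimately rest on the same Khintchine inequality (Theorem \ref{khin}).
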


Let us return to the general setting and consider the noncommutative probability space $(\nx,\tau)$. Let $\phi(t)=e^{t^2}-1$. It is well known that $\phi$ is a Young function. Define for $x\in \nx$,
\[
\|x\|_\phi = \inf \{c>0: \tau[\phi(|x|/c)]\le 1\}.
\]
Then the Orlicz space $L_\phi(\nx)$ is the completion of $\nx$ in this norm.
Recall that the entropy of a positive $\tau$-measurable operator $\rho$ (see, e.g., \cite{FK}) is defined as
$$\Ent(\rho)=\tau\Big[\rho\ln \Big(\frac{\rho}{\tau(\rho)}\Big)\Big].$$
For a $\tau$-measurable operator $x$, we introduce the exponential integrability condition of Bobkov and G\"otze
\begin{equation}\label{exp0}
\tau(e^{x-E_{\Fix}x})\le \tau(e^{c\Ga_A(x,x)}).
\end{equation}
Then we have the following transportation type inequality.
\begin{theorem}\label{ttra}
  Let $(\nx,\tau)$ be a noncommutative probability space. Suppose the Poincar\'e type inequalities \eqref{poin} or the exponential integrability \eqref{exp0} hold for all self-adjoint $x\in\nx$ with $E_{\Fix}x=0$ and $\|\Ga_A(x,x)^{1/2}\|_{\phi}\le 1$. Then
  \begin{equation}\label{teq1}
    \sup_{ \|\Ga_A(x,x)^{1/2}\|_{\phi}\le 1} |\tau(x\rho)-\tau(xE_{\Fix}\rho)|\le C\max\{\sqrt{\Ent(\rho)}, ~ \Ent(\rho)\}
  \end{equation}
  for all $\tau$-measurable positive operator $\rho$ with $\tau(\rho)=1$.
\end{theorem}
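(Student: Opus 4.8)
The plan is to run a noncommutative Bobkov--G\"otze argument, with entropy duality playing the role of the classical Legendre transform. First I would dispose of the two outer parameters hidden in the supremum. Since $E_{\Fix}$ is a trace-preserving conditional expectation, $\tau(x\rho)-\tau(xE_{\Fix}\rho)=\tau((x-E_{\Fix}x)\rho)$, so replacing $x$ by $x-E_{\Fix}x$ I may assume $E_{\Fix}x=0$. This replacement does not change the constraint: for $z\in\Fix$ one has $Az=Az^{*}=0$ and $z^{*}z\in\Fix$ (as $\Fix$ is a von Neumann algebra), whence $\Ga_A(z,z)=-\tfrac12 A(z^{*}z)=0$, and the Cauchy--Schwarz inequality for the nonnegative sesquilinear form $\Ga_A$ then forces $\Ga_A(z,\cdot)=\Ga_A(\cdot,z)=0$; by sesquilinearity $\Ga_A(x-E_{\Fix}x,x-E_{\Fix}x)=\Ga_A(x,x)$. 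Since the hypotheses and the natural Lipschitz seminorm concern self-adjoint elements, it suffices (splitting a general $x$ into its self-adjoint and anti-self-adjoint parts) to bound $\tau(x\rho)$ from above over self-adjoint $x$ with $E_{\Fix}x=0$ and $\|\Ga_A(x,x)^{1/2}\|_\phi\le 1$; replacing $x$ by $-x$ recovers the absolute value.

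The core tool is the Gibbs variational principle: by Klein's inequality the relative entropy of $\rho$ against $\sigma=e^{\lambda x}/\tau(e^{\lambda x})$ is nonnegative, which rearranges to
\[
\lambda\,\tau(x\rho)\le \Ent(\rho)+\ln\tau(e^{\lambda x}),\qquad \lambda>0 .
\]
This holds for every self-adjoint $x\in\nx$ and every density $\rho$ (by approximation when $\rho$ is unbounded; if $\Ent(\rho)=\8$ the asserted inequality is trivial). It remains only to control the logarithmic moment generating function.

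The heart of the matter is the sub-Gaussian bound $\ln\tau(e^{\lambda x})\le C_1\lambda^{2}$ for $0<\lambda\le\lambda_0$, with absolute $C_1,\lambda_0$, valid under $\|\Ga_A(x,x)^{1/2}\|_\phi\le1$. Writing $\Ga=\Ga_A(x,x)$, this normalization is exactly $\tau(e^{\Ga})\le 2$. Under hypothesis \eqref{exp0}, applied to $\lambda x$ (legitimate for $\lambda\le1$ since $\Ga_A(\lambda x,\lambda x)=\lambda^{2}\Ga$), I get $\tau(e^{\lambda x})\le\tau(e^{c\lambda^{2}\Ga})$, and the operator H\"older inequality $\tau(e^{s\Ga})\le\tau(e^{\Ga})^{s}\le 2^{s}$ for $0\le s\le1$ yields $\ln\tau(e^{\lambda x})\le (c\ln2)\lambda^{2}$ for $\lambda\le\min\{1,c^{-1/2}\}$. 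Under hypothesis \eqref{poin} instead, for self-adjoint $x$ it reads $\|x\|_p\le C\sqrt p\,\|\Ga^{1/2}\|_p$; combined with the Orlicz--$L_p$ comparison $\|\Ga^{1/2}\|_p\le C'\sqrt p\,\|\Ga^{1/2}\|_\phi\le C'\sqrt p$ this gives $\|x\|_p\le C_2 p$, and feeding $|\tau(x^{k})|\le\|x\|_k^{k}\le(C_2k)^{k}$ together with $\tau(x)=0$ into the power series for $\tau(e^{\lambda x})$ produces the same $\lambda^{2}$ bound on a small interval. Either way the log-MGF is controlled only up to a finite threshold $\lambda_0$, and this restriction is the source of the linear term below.

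Finally I would optimize in $\lambda\in(0,\lambda_0]$. For small entropy, $\sqrt{\Ent(\rho)/C_1}\le\lambda_0$, taking $\lambda=\sqrt{\Ent(\rho)/C_1}$ gives $\tau(x\rho)\le 2\sqrt{C_1\,\Ent(\rho)}$; for large entropy, $\Ent(\rho)>C_1\lambda_0^{2}$, taking $\lambda=\lambda_0$ gives $\tau(x\rho)\le \Ent(\rho)/\lambda_0+C_1\lambda_0\le 2\Ent(\rho)/\lambda_0$. Combining the two regimes and taking the supremum over admissible $x$ yields \eqref{teq1} with an absolute constant. I expect the main obstacle to be the third step: establishing the uniform sub-Gaussian log-MGF bound in the noncommutative setting --- making sense of $e^{\lambda x}$ and justifying the operator H\"older estimate $\tau(e^{s\Ga})\le\tau(e^{\Ga})^{s}$, the Orlicz--$L_p$ comparison for $\tau$-measurable operators, and especially tracking that the estimate survives only for $\lambda\le\lambda_0$, which is precisely what forces the Bernstein-type shape $\max\{\sqrt{\Ent(\rho)},\Ent(\rho)\}$ rather than a pure $\sqrt{\Ent(\rho)}$.
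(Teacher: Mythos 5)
Your proposal is correct and follows essentially the same route as the paper: reduce to self-adjoint $x$ with $E_{\Fix}x=0$ using $\Ga_A(x-E_{\Fix}x,x-E_{\Fix}x)=\Ga_A(x,x)$, apply the noncommutative Gibbs/entropy duality $\la\tau(x\rho)\le \Ent(\rho)+\ln\tau(e^{\la x})$ (the paper's Lemma \ref{ent1}, via \cite{JZ}*{Lemma 3.17}), establish a quadratic bound on $\ln\tau(e^{\la x})$ that is valid only up to a finite threshold $\la_0$, and optimize in $\la$ over the two entropy regimes --- which is exactly the paper's Legendre-transform computation with the Young function $f(t)=C_\eps t^2$ for $t\le t_0$ and $f(t)=\8$ beyond. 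Your derivation of the sub-Gaussian bound from \eqref{exp0} via $\tau(e^{s\Ga})\le\tau(e^{\Ga})^s\le 2^s$ is a slightly slicker substitute for the paper's term-by-term expansion of $\tau(e^{ct^2\Ga_A(x,x)})$ using the $\psi_2$ moment growth, but the strategy and the resulting Bernstein-type shape are identical.
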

Let us now indicate the connection between Theorem \ref{ttra} and transportation cost inequalities in classical probability. Let $(\Om, d)$ be a metric space equipped with a probability measure $\mu$. Assume $\nu$ is a probability measure absolutely continuous with respect to $\mu$. Suppose that there exists $x_0\in \Om$ such that $\int d(x,x_0) d\pz<\8$ for $\pz=\mu$ and $\nu$.  Let $g=\frac{d\nu}{d\mu}$. By the Kantorovich--Rubinstein formula (see, e.g., \cite{Vil}), the 1-Wasserstein distance can be written as
\begin{equation}\label{w1de}
W_1(\mu,\nu)=\sup_{\|f\|_{\Lip}\le 1}\Big|\int f gd\mu-\int f d\mu\Big|.
\end{equation}
Here $\|\cdot\|_{\Lip}$ denotes the Lipschitz constant. Suppose $\int e^{tf}d\mu\le e^{ct^2/2}$ for  all $t>0$ and all $f$ with $\int f d\mu =0$, $\|f\|_{\Lip}\le 1$. Then Bobkov and G\"otze showed in \cite{BG} that
\begin{equation}\label{bg1}
  W_1(\mu,\nu)\le \sqrt{2c \Ent(g)}=\sqrt{2c D(\nu||\mu)},
\end{equation}
for all $\nu$ absolutely continuous with respect to $\mu$. Here $D(\nu||\mu)=\int \ln \frac{d\nu}{d\mu}d\nu$ is the relative entropy; see also, e.g., \cites{Eff, Tro} and their references therein for the notion of quantum relative entropy.

Recall that the gradient form for Laplacian is the modulus of gradient $\Ga_{-\Delta}(f,f)=|\nabla f|^2 \le \|f\|_{\Lip}^2$. Given two states $\xi,\eta$ of the von Neumann algebra $\nx$, let us define
\begin{equation}\label{q1de}
Q_1(\xi, \eta) = \sup \{|\xi(x)-\eta(x)|: x \mbox{ self-adjoint}, \|\Ga_A(x,x)^{1/2}\|_{\8}\le 1\}.
\end{equation}
In the case that $\xi(\cdot)=\mu(\cdot)$, $\eta(\cdot)=\nu(\cdot)$ and the gradient form is associated to the Laplacian, we clearly have $W_1(\mu,\nu)= Q_1(\mu,\nu)$. The classical definition of Lipschitz functions is hard to generalize to the noncommutative setting. But the gradient form is well-defined. Therefore, Junge and the author simply take \eqref{q1de} as the definition of noncommutative 1-Wasserstein distance in \cite{JZ}. By extending the proof of \eqref{bg1}, in the same paper it was showed that if
\begin{equation}\label{exp1}
\tau(e^{t(x-E_{\Fix}x)})\le e^{ct^2}
\end{equation}
for all $t>0$ and self-adjoint $x\in \nx$ with $\|\Ga_A(x,x)^{1/2}\|_\8\le 1$, then
\begin{equation}\label{q1tr}
  Q_1(\rho,E_{\Fix}\rho)\le C\sqrt{\Ent(\rho)}
\end{equation}
for all $\tau$-measurable positive operators $\rho$ with $\tau(\rho)=1$. Here given two $\tau$-measurable operators $\rho,\si$, $Q_1(\rho,\si):=Q_1(\xi,\eta)$ for $\xi(x)=\tau(x\rho)/\tau(\rho)$ and $\eta(x)=\tau(x\si)/\tau(\si)$.  This quantity $Q_1$ is also closely related to the distance used by Rieffel to define his quantum metric spaces \cites{Ri1, Ri2}. Indeed, new quantum metric spaces were found using a variant of $Q_1$ (the self-adjoint condition was removed) in \cites{JMe,JMP}.

Notice that in \eqref{poi0} and \eqref{exp0}, we actually have better bounds than \eqref{exp1}. Hence, instead of requiring the strong condition $\|\Ga_A(x,x)^{1/2}\|_\8\le1$, we can ask for $\|\Ga_A(x,x)^{1/2}\|_\phi\le 1$. This motivates the following definition \begin{equation}\label{qphi}
   Q_{\phi}(\xi,\eta)=\sup \{|\xi(x)-\eta(x)|: x \mbox{ self-adjoint}, \|\Ga_A(x,x)^{1/2}\|_{\phi}\le 1\}.
\end{equation}
Then the conclusion \eqref{teq1} of Theorem \ref{ttra} can be rewritten as
\begin{equation}\label{teq2}
  Q_{\phi}(\rho, E_{\Fix}\rho)\le C\max\{\sqrt{\Ent(\rho)}, ~ \Ent(\rho)\}.
\end{equation}
Here $Q_\phi(\rho, E_{\Fix}\rho)$ is defined from $Q_{\phi}(\xi,\eta)$ in a similar way to $Q_1$. Notice that $\|\Ga_A(x,x)^{1/2}\|_{\phi}\le 1$ allows $\|\Ga_A(x,x)^{1/2}\|_p\le c\sqrt{p}$. Clearly, $Q_{\phi}$ is much bigger than $Q_1$ in general. Moreover, \eqref{teq2} implies that a phase transition behavior may happen for $Q_\phi$ depending on the entropy functional. In Section 4, we will prove Theorem \ref{ttra} and elaborate on its relationship with different transportation cost inequalities obtained in \cites{Ta,OV,BGL}. We also show that the linear term $\Ent(\rho)$ gives the correct order when the entropy is large. Thus \eqref{teq1} is sharp up to a constant. As an immediate consequence of Theorem \ref{ttra}, we show that the entropy functional gives an upper bound for the Wiener and Rademacher chaos of order 1 and 2. See Section 4 for the precise definition of chaos.
\begin{cor}
  Let $(\Om, \gx, \pz)$ be the Gaussian (resp. Rademacher) measure space. Let $\hx_n$ denote the the Wiener (resp. Rademacher) chaos of order $n$ and $P_{\hx_n}: L_2(\Om,\gx, \pz)\to \hx_n$  the orthogonal projection. Then for any positive function $f\in L_2(\Om,\gx,\pz)$ with $\|f\|_1=1$, we have
\[
\|P_{\hx_1}f\|_2\le C\sqrt{\Ent(f)},  \quad \|P_{\hx_2}(f)\|_2\le C'\max\{\sqrt{\Ent(f)},~\Ent(f)\}.
\]
\end{cor}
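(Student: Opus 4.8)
The plan is to apply Theorem~\ref{ttra}, supplemented by the sharper estimate \eqref{q1tr}, to the commutative probability space $(\nx,\tau)=(L_\8(\Om,\pz),\ez)$ equipped with the Ornstein--Uhlenbeck semigroup in the Gaussian case and the Bonami--Beckner noise semigroup in the Rademacher case. In both models the generator $A$ acts as $n\cdot\mathrm{id}$ on $\hx_n$, so $\Fix=\hx_0$ and $E_{\Fix}=\ez$, and the $L_p$ Poincar\'e inequality \eqref{poin} holds (it is the classical Gaussian inequality, a special case of Theorem~\ref{gpp0} for trivial $G$, resp.\ the Walsh estimate of \cite{ELP}), so the hypotheses of Theorem~\ref{ttra} are satisfied. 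Taking $\rho=f$, the normalization $\|f\|_1=1$ gives $\tau(\rho)=1$ and $E_{\Fix}\rho=\ez[f]=1$; thus for real-valued (self-adjoint) $y$ one has $\xi(y)-\eta(y)=\ez[fy]$. The link to the projections is the Hilbert space duality
\[
\|P_{\hx_n}f\|_2=\sup\{|\ez[fy]|:\ y\in\hx_n,\ \|y\|_2\le1\},
\]
which holds because $P_{\hx_n}$ is self-adjoint and fixes $\hx_n$, and because $\ez[y]=0$ for $n\ge1$.

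For the order-one bound I would use that $\Ga_A(y,y)$ is constant on $\hx_1$: writing $y=\sum_i a_i\xi_i$ (resp.\ $y=\sum_i a_i r_i$), the carr\'e du champs gives $\Ga_A(y,y)=\sum_i a_i^2=\|y\|_2^2$, whence $\|\Ga_A(y,y)^{1/2}\|_\8=\|y\|_2$. Every $y\in\hx_1$ with $\|y\|_2\le1$ is therefore admissible for $Q_1$, and since the Gaussian (resp.\ Rademacher) measure satisfies \eqref{exp1}, the bound \eqref{q1tr} yields $Q_1(\rho,E_{\Fix}\rho)\le C\sqrt{\Ent(f)}$. Passing to the supremum over such $y$ gives $\|P_{\hx_1}f\|_2\le C\sqrt{\Ent(f)}$.

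For the order-two bound $\Ga_A(y,y)$ is no longer bounded, so only the $\phi$-constraint of $Q_\phi$ is available, and the heart of the matter is the dimension-free comparison
\[
\|\Ga_A(y,y)^{1/2}\|_\phi\le K\|y\|_2\qquad(y\in\hx_2)
\]
for a universal $K$. To establish it I would first note that $\Ga_A(y,y)$ lies in $\hx_0\oplus\hx_2$ --- in the Gaussian case $\Ga_A(y,y)=|\nabla y|^2$ with each $\partial_k y\in\hx_1$, and the Walsh computation is analogous --- and that the Dirichlet form identity gives $\ez[\Ga_A(y,y)]=\langle Ay,y\rangle=2\|y\|_2^2$. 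Since $\Ga_A(y,y)$ belongs to a fixed (degree $\le2$) chaos, all of its $L_q$ norms are comparable by Nelson's (resp.\ Bonami--Beckner's) hypercontractivity, so $\|\Ga_A(y,y)\|_q\le c\,q\,\|y\|_2^2$ and hence $\|\Ga_A(y,y)^{1/2}\|_p\le c'\sqrt p\,\|y\|_2$ for all $p\ge2$. Combined with the standard equivalence $\|g\|_\phi\asymp\sup_{p\ge2}p^{-1/2}\|g\|_p$ for $\phi(t)=e^{t^2}-1$, this proves the comparison.

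With the comparison in hand the argument concludes quickly: for $y\in\hx_2$ with $\|y\|_2\le1$ the element $y/K$ satisfies $\|\Ga_A(y/K,y/K)^{1/2}\|_\phi\le1$ and so is admissible for $Q_\phi$, whence by \eqref{teq2} $|\ez[fy]|\le K\,Q_\phi(\rho,E_{\Fix}\rho)\le KC\max\{\sqrt{\Ent(f)},\Ent(f)\}$; taking the supremum over $y$ gives the second inequality. The main obstacle is precisely the displayed $\phi$-norm comparison: one must control the sub-Gaussian Orlicz norm of $\Ga_A(y,y)^{1/2}$ by $\|y\|_2$ with a constant independent of the dimension $d$, and it is the fixed-chaos hypercontractivity that makes $K$ dimension-free.
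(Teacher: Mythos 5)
Your proposal is correct and follows essentially the same route as the paper: Hilbert-space duality reduces $\|P_{\hx_n}f\|_2$ to testing $f$ against unit-norm chaos elements $h$ with $\ez[h]=0$, one verifies $\|\Ga_A(h,h)^{1/2}\|_\phi\le K\|h\|_2$, and then \eqref{q1tr} (order one, where $\Ga_A(h,h)$ is constant) and Theorem~\ref{tran} via Proposition~\ref{clpo} (order two) finish the argument. The only substantive difference is how the bound $\||\nabla h|\|_p\le c\sqrt{p}\,\|h\|_2$ for $h\in\hx_2$ is obtained: the paper computes $\nabla h$ explicitly in coordinates and applies Minkowski's and the Khintchine--Kahane inequalities to the resulting vector of first-chaos entries, whereas you invoke hypercontractivity and equivalence of moments on the fixed low-order chaos containing $\Ga_A(h,h)$ --- both are valid and essentially equivalent, since Khintchine for Gaussian (resp.\ Rademacher) sums is exactly first-chaos hypercontractivity.
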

One can also deduce concentration and isoperimetric-type inequalities from the weaker inequality \eqref{q1tr} as indicated in \cite{BG}. The point here is that in the absence of logarithmic Sobolev inequality, \eqref{teq1} and \eqref{q1tr} may be good alternatives of transportation cost inequalities derived from LSI.

Furthermore, we will show that $\|\cdot\|_\phi$ gives new examples of quantum metric spaces in Section 5. Let $C_r^*(G)$ (resp. $\cz(G)$) denote the reduced group $C^*$-algebra of $G$ (resp. group algebra of $G$). Define $\opnorm{a}=\max\{\|\Ga_A(a,a)^{1/2}\|_\phi, \|\Ga_A(a^*,a^*)^{1/2}\|_\phi\}$ for $a\in \cz(G)$. See Section 5 for unexplained notation and terminology in the following result.
\begin{cor}
  $(C_r^*(G),\cz(G),\opnorm{\cdot})$ is a compact quantum metric space provided one of the following holds:
  \begin{enumerate}
    \item $G$ is finitely generated with rapid decay and $\inf_{|g|=k}\psi(g)\ge c_\al(1+k)^\al$ for some $\al>0$.
    \item ${\rm dim} H_\psi <\8$, the kernel of $\psi$ is $\{e\}$ and $\inf_{b_\psi(g)\neq 0}\psi(g)>0$. Here $b_\psi: G\to H_\psi$ is the 1-cocycle associated to $\psi$ on $G$.
  \end{enumerate}
\end{cor}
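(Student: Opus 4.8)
The plan is to verify Rieffel's criterion for a compact quantum metric space \cites{Ri1,Ri2}: writing $A_0=C_r^*(G)$, $\mathcal A_0=\cz(G)$ and $L=\opnorm{\cdot}$, it suffices to check that (i) $L(a)=0$ if and only if $a\in\cz1$, and (ii) the image in $A_0/\cz1$ of the ball $\{a\in\cz(G):L(a)\le1\}$ is totally bounded for the quotient operator norm (lower semicontinuity of $L$, coming from the fact that $\|\cdot\|_\phi$ is a norm and $\Ga_A$ a positive form, being routine). The common computational input is the gradient form: from $A\la(g)=\psi(g)\la(g)$, $\la(g)^*\la(h)=\la(g^{-1}h)$ and $b_\psi(g^{-1}h)=\al_{g^{-1}}(b_\psi(h)-b_\psi(g))$ one gets
\[
\Ga_A(\la(g),\la(h))=\tfrac12\bigl(\psi(g)+\psi(h)-\psi(g^{-1}h)\bigr)\la(g^{-1}h)=\langle b_\psi(g),b_\psi(h)\rangle\,\la(g^{-1}h).
\]

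For (i), taking the trace gives $\tau(\Ga_A(a,a))=\sum_g|\hat a(g)|^2\psi(g)$ for $a=\sum_g\hat a(g)\la(g)$. Since $\|\cdot\|_\phi\ge\|\cdot\|_2$, the vanishing $L(a)=0$ forces $\Ga_A(a,a)=0$, hence $\hat a$ is supported on $\ker\psi=\{g:\psi(g)=0\}=\{g:b_\psi(g)=0\}$; conversely $\Ga_A$ vanishes there because $b_\psi$ does. Thus $L(a)=0$ iff $a$ is supported on $\ker\psi$, and in both cases $\ker\psi=\{e\}$: in case (1) any $g\neq e$ has $|g|\ge1$, so $\psi(g)\ge c_\al 2^\al>0$; in case (2) this is assumed. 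In case (2) the same identity yields more: since $\al$ is orthogonal, $\|b_\psi(g)-b_\psi(h)\|^2=\psi(g^{-1}h)\ge\inf_{b_\psi\neq0}\psi=:\de>0$ for $g\neq h$, so $b_\psi$ embeds $G$ as a $\de^{1/2}$-separated subset of the finite-dimensional space $H_\psi$; consequently $\{g:\psi(g)\le R\}$ is finite for every $R$, i.e. $\psi$ is proper.

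For (ii) the starting point is the a priori bound from the $L_p$ Poincar\'e inequality (the Corollary specializing Theorem \ref{gpp0}): combining $\|\Ga_A(a,a)^{1/2}\|_p\le C\sqrt p\,\|\Ga_A(a,a)^{1/2}\|_\phi$ with \eqref{poin} gives $\|a-\tau(a)\|_p\le C'p\,L(a)$ for all $2\le p<\8$, so the ball is bounded and enjoys uniform exponential ($L_p$-scale) integrability. I would then run a total-boundedness argument of Arzel\`a--Ascoli type \cite{JMP}, approximating $a-\tau(a)$ by finitely supported elements. The natural smoothing is the semigroup itself: each $S_t$ is unital completely positive, hence a complete contraction on $C_r^*(G)$, and $S_t\la(g)=e^{-t\psi(g)}\la(g)$ damps high frequencies. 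In case (1), finite generation makes each ball $\{|g|\le N\}$ finite, so truncations live in finite-dimensional subspaces, while property RD in the form $\|b\|_{C_r^*}\le C\|(1+|\cdot|)^s\hat b\|_2$ converts weighted $\ell_2$-control of the tail into an operator-norm estimate; in case (2) the properness of $\psi$ replaces finite generation, the finite sets $\{\psi\le R\}$ supplying the finite-dimensional truncations. For each fixed $t>0$ the exponential weight $e^{-t\psi(g)}$ beats the polynomial RD weight, so $\{S_t a:L(a)\le1\}$ is totally bounded; one then covers the bounded ball by finitely many balls around these truncations.

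The main obstacle is precisely the uniform operator-norm tail estimate $\sup_{L(a)\le1}\|a-S_ta\|_{C_r^*}\to0$ as $t\to0^+$, and here the Orlicz norm in the definition of $L$ is essential. If one only used the $L_2$-consequence $\sum_g|\hat a(g)|^2\psi(g)\le1$ of $L(a)\le1$, then combining RD with the growth bound $\psi(g)\ge c_\al(1+|g|)^\al$ would force the tail to zero only under the stronger requirement $\al>2s$, where $s$ is the RD exponent; the same deficit appears if one tries to dominate the standard word-length Lip-norm from below. To reach the stated range $\al>0$ one must instead exploit the full strength of $\|\Ga_A(a,a)^{1/2}\|_\phi\le1$, equivalently the $C\sqrt p$ growth of $\|\Ga_A(a,a)^{1/2}\|_p$ across all $p$, feeding the $\sqrt p$-Poincar\'e bound $\|a-\tau(a)\|_p\le C'p$ into the RD (or properness) estimate so as to produce high-frequency decay of Sobolev type that outpaces the weight $(1+|g|)^{2s}$. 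Making this quantitative is the delicate step; once it is established, the finite-dimensionality of the truncations (from finite generation in (1), from properness in (2)) closes the total-boundedness argument, and Rieffel's criterion then yields that $(C_r^*(G),\cz(G),\opnorm{\cdot})$ is a compact quantum metric space.
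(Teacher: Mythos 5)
Your proposal does not close the decisive step, and you say so yourself: the uniform operator-norm compactness of the ball $\{a\in\cz(G):\opnorm{a}\le 1,\ \tau(a)=0\}$ is exactly what you defer as ``the delicate step'' to be ``made quantitative''. As written, the semigroup-truncation/Arzel\`a--Ascoli scheme is only a plan, and your own analysis shows why the naive version fails (an $L_2$-gradient bound plus RD needs $\al>2s$, not $\al>0$). The paper resolves this by a different mechanism that you do not supply: after reducing to relative compactness via the Ozawa--Rieffel lemma (Lemma \ref{cqms}), it fixes a \emph{single} large exponent $p$, uses the boundedness of the noncommutative Riesz transform to get $\|A^{1/2}x\|_p\le C(p)\max\{\|\Ga_\psi(x,x)^{1/2}\|_p,\|\Ga_\psi(x^*,x^*)^{1/2}\|_p\}\le C'(p)\opnorm{x}$ (here the Orlicz norm is used only through the one-exponent estimate $\|\cdot\|_p\le C\sqrt p\,\|\cdot\|_\phi$, not ``across all $p$''), and then invokes compactness of $A^{-1/2}:L_p^0\to L_\8^0$ for $p$ large enough --- obtained in case (1) from the heat-kernel decay $\|T_t:L_2^0\to\lx(G)\|\le Ct^{-n(s,\al)/4}$ that RD together with $\psi(g)\ge c_\al(1+|g|)^\al$ provides, and in case (2) from completely bounded multiplier theory. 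The parameter $\al>0$ enters only through how large $p$ must be chosen; no uniform tail estimate $\sup_{\opnorm{a}\le1}\|a-S_ta\|\to0$ is ever needed.

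There is a second omission. In this paper a compact quantum metric space requires the seminorm to satisfy the Leibniz condition \eqref{leib}, and for $\opnorm{a}=\max\{\|\Ga_A(a,a)^{1/2}\|_\phi,\|\Ga_A(a^*,a^*)^{1/2}\|_\phi\}$ this is not routine: the paper's Lemma \ref{lip} proves it by realizing $\Ga_A$ as $\lge\de(a),\de(a)\rge$ for a derivation into a KSGNS Hilbert $\lx(G)$-module, embedding that module into a column space $C(\lx(G))$ via Kasparov's absorption theorem, and only then obtaining the triangle inequality for the Orlicz norm of $\de(ab)=a\de(b)+\de(a)b$. Your proposal never addresses this. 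Finally, a smaller inaccuracy: the bound $\|a-\tau(a)\|_p\le C'p\,\opnorm{a}$ you invoke is not available from the paper's results for $\lx(G)$; Corollary \ref{gppo} controls $\|\pi(f)-S_{1/2}f\|_p$ in the crossed product, not $\|f-\tau(f)\|_p$ in $\lx(G)$, and the general inequality \eqref{poin} for these semigroups is deferred to a forthcoming paper.
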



\section{Preliminaries}\label{prel}
From now on, we always assume $2\le p<\8$ unless we specify otherwise.
\subsection{Poincar\'e type inequalities for Gaussian measures}
Let $-L=\Delta-x\cdot \nabla$ be the generator of Ornstein--Uhlenbeck semigroup $P_t$ in $\rz^d$ for $d<\8$. Let $\ga_d$ denote the standard Gaussian measure on $\rz^d$. Then by e.g. \cite{Pi} the Mehler formula for $\cos^L\ta:=P_{-\ln\cos\ta}$ holds: for all $f\in L_2(\ga_d)$ and $0\le \ta\le \pi/2$
\begin{equation}
  \label{mehl}
  (\cos^L\ta f) (x)= \int_{\rz^d} f(x \cos \ta + y\sin \ta) \ga_d(d y).
\end{equation}
Following \cite{ELP}, we have the Poincar\'e type inequality for Gaussian measures, which should be a classical result; see \cite{JZ} for another proof based on martingale inequalities. We present the proof here because it is our guideline for the group measure space setting. We write $|\nabla f|=(\sum_{i=1}^d(\frac{\partial f}{\partial x_i})^2)^{1/2}$
\begin{prop}\label{clpo}
  Let $p\ge2$ and $\phi\in L_1([0,\pi/2])$. Then for all $f\in L_\8(\rz^d,\ga_d)$,
  \[
  \left\|\int_0^{\pi/2} \phi(\ta)\frac{\partial}{\partial\ta}\cos^L\ta (f) d\ta \right\|_{L_p(\ga_d)}\le C\sqrt{p} \|\phi\|_{L_1([0,\pi/2])}\||\nabla f|\|_{L_p(\ga_d)}.
  \]
  In particular,
  \[
  \|f-\int f d\ga_d\|_{L_p(\ga_d)}\le C\sqrt{p}\||\nabla f|\|_{L_p(\ga_d)}.
  \]
\end{prop}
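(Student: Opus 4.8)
The plan is to prove the general statement first and then extract the Poincaré inequality as the special case $\phi \equiv 1$. The key identity is the fundamental theorem of calculus along the Mehler flow: since $\cos^L(\pi/2) = P_{-\ln 0} = P_\infty$ projects onto constants (giving $\int f \, d\ga_d$) and $\cos^L 0 = P_0 = \mathrm{id}$, we have
\[
f - \int f \, d\ga_d = -\int_0^{\pi/2} \frac{\partial}{\partial\ta} \cos^L\ta(f) \, d\ta,
\]
so the ``in particular'' clause indeed follows from the main estimate with $\phi \equiv 1$ (noting $\|1\|_{L_1([0,\pi/2])} = \pi/2$, absorbed into $C$). Thus the whole proposition reduces to bounding $\left\|\int_0^{\pi/2} \phi(\ta)\, \partial_\ta \cos^L\ta(f)\, d\ta\right\|_{L_p(\ga_d)}$.

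First I would differentiate the Mehler formula \eqref{mehl} in $\ta$. Writing $u(x,\ta) = f(x\cos\ta + y\sin\ta)$ and differentiating under the integral sign gives
\[
\frac{\partial}{\partial\ta}\cos^L\ta(f)(x) = \int_{\rz^d} \nabla f(x\cos\ta + y\sin\ta)\cdot(-x\sin\ta + y\cos\ta)\, \ga_d(dy).
\]
The crucial observation is that for fixed $x$, the point $x\cos\ta + y\sin\ta$ and the direction vector $-x\sin\ta + y\cos\ta$ are, under the Gaussian measure $\ga_d(dy)$, an \emph{independent} pair: the rotation $(y \mapsto (y', y''))$ with $y' = y\cos\ta + x\sin\ta$-type change of variables exhibits the integrand as a Gaussian average of $\nabla f$ at an independent Gaussian point, paired against an independent standard Gaussian vector. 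This is exactly the structure that makes a vector-valued Gaussian (Khintchine/Pisier) inequality applicable.

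The heart of the argument is then to integrate against $\phi(\ta)\, d\ta$, take the $L_p(\ga_d)$ norm in $x$, and bound everything by the $L_p$ norm of $|\nabla f|$ with constant $C\sqrt p$. I would fold the $\ta$-integral and the inner Gaussian average into a single integration against a product Gaussian-type measure, so that the object to estimate is of the form $\big\|\int g(x,\ta,y)\cdot v(\ta,y)\, d\mu\big\|_{L_p(\ga_d(dx))}$ where $v$ is (conditionally) a Gaussian random vector. Applying the noncommutative/vector-valued Gaussian inequality $\|\sum_k a_k g_k\|_p \le C\sqrt p\,(\sum_k |a_k|^2)^{1/2}$ (the $\sqrt p$ growth of the Gaussian $L_p$ norm) in the $(\ta,y)$ variables, and then using $\|\phi\|_{L_1}$ to control the $\ta$-average, produces the factor $C\sqrt p\,\|\phi\|_{L_1}$ times a quantity controlled by $\||\nabla f|\|_{L_p(\ga_d)}$.

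The main obstacle is the careful bookkeeping of the independence/change-of-variables step: one must verify that after the rotation in the $y$-variable the ``position'' argument of $\nabla f$ is distributed as $\ga_d$ \emph{independently} of the Gaussian ``direction'' vector being paired against it, uniformly in $\ta$, so that the $\sqrt p$ constant from the Gaussian inequality does not accumulate any $\ta$- or dimension-dependence. I expect the rest (Minkowski's integral inequality to pull the $\ta$-integral outside the $L_p$ norm, and absorbing $\|\phi\|_{L_1}$) to be routine. This scalar computation is precisely the template we will later lift to the group measure space $L_\8(\rz^d,\ga_d)\rtimes_{\hat\al} G$, where the same Mehler-flow identity and Gaussian inequality survive but must be interpreted in the noncommutative $L_p$ setting.
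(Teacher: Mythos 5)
Your proposal follows essentially the same route as the paper: differentiate the Mehler formula in $\ta$, recognize the integrand as a rotated copy of $\nabla f(x)\cdot y$, pull the $\ta$-integral out by Minkowski to collect the factor $\|\phi\|_{L_1}$, and finish with the Gaussian/Khintchine inequality, whose $C\sqrt{p}$ constant is the source of the stated bound; the reduction of the ``in particular'' clause to $\phi\equiv 1$ via $\lim_{t\to\8}P_tf=\int f\,d\ga_d$ is also exactly the paper's. One correction to your ``crucial observation'': for \emph{fixed} $x$, the pair $\bigl(x\cos\ta+y\sin\ta,\,-x\sin\ta+y\cos\ta\bigr)$ is \emph{not} independent under $\ga_d(dy)$ alone (both components are affine in the single Gaussian $y$, with covariance $\sin\ta\cos\ta\, I_d$), and no rotation of the $y$-variable by itself can decouple them. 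The independence you need is joint: the map $\rx_\ta(x,y)=(x\cos\ta+y\sin\ta,\,-x\sin\ta+y\cos\ta)$ preserves the product measure $\ga_d\times\ga_d$, so after bounding the $L_p(\ga_d(dx))$ norm by the $L_p(\ga_d\times\ga_d)$ norm (Minkowski/H\"older in $y$), the change of variables gives $\|\rx_\ta(\nabla f(x)\cdot y)\|_{L_p(\ga_d\times\ga_d)}=\|\nabla f(x)\cdot y\|_{L_p(\ga_d\times\ga_d)}$, uniformly in $\ta$; conditioning on $x$ then exhibits a genuine Gaussian sum with variance $|\nabla f(x)|^2$. Your later step of ``folding everything into a single product Gaussian measure'' is the correct repair and is what the paper actually does, so the slip does not derail the argument, but as literally stated the fixed-$x$ independence claim is false.
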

\begin{proof}
  By approximation, we may assume that $f$ is a bounded $C^1$ function with  $|\nabla f|$ bounded by some polynomial so that we can differentiate under integral in the following. Since $\lim_{t\to \8} P_t f=\int f d\ga_d$, by \eqref{mehl}, we have
  \begin{align*}
  \int_0^{\pi/2} \phi(\ta)\frac{\partial \cos^L\ta (f)}{\partial \ta}(x) d\ta &=
  \int_0^{\pi/2} \phi(\ta)\frac{\partial}{\partial\ta}\int f(x \cos \ta + y\sin \ta) \ga_d(d y) d \ta\\
  &=\int_0^{\pi/2}\int_{\rz^d} \phi(\ta)\rx_\ta (\nabla f(x)\cdot y) \ga_d(dy) d\ta,
  \end{align*}
  where $\rx_\ta$ is a measure preserving automorphism of $(\rz^d\times \rz^d,\ga_d\times \ga_d)$ given by $(\rx_\ta F)(x,y)=F(x\cos\ta+y\sin\ta, -x\sin\ta+y\cos\ta)$. By Minkowski's integral inequality and H\"older's inequality,
  \begin{align*}
  &\left\|\int_0^{\pi/2} \phi(\ta)\frac{\partial}{\partial\ta}\cos^L\ta (f) d\ta \right\|_{L_p(\ga_d)} \\
\le & \int_0^{\pi/2}|\phi(\ta)|\int_{\rz^d}\left(\int_{\rz^d}|\rx_\ta(\nabla f(x)\cdot y)|^p\ga_d(dx)\right)^{1/p}\ga_d(dy)d\ta\\
  \le & \int_0^{\pi/2}|\phi(\ta)|\int_{\rz^d}\left(\int_{\rz^d}|\rx_\ta(\nabla f(x)\cdot y)|^p\ga_d(dx)\ga_d(dy)\right)^{1/p}d\ta\\
  \le &\|\phi\|_{L_1([0,\pi/2])}\left\|\sum_{i=1}^d \frac{\partial f}{\partial x_i}(x)y_i\right\|_{L_p(\ga_d\times\ga_d)}.
  \end{align*}
  The first assertion follows from Khintchine's inequality. Taking $\phi(\ta)=1_{[0,\pi/2]}(\ta)$ gives the second one as in \cite{ELP}.
\end{proof}
We remark that by approximation the above result also holds for the standard Gaussian measure on $\rz^\8$, because we still have the Mehler formula in this setting; see, e.g., \cite{Nua}.

\subsection{Crossed products}\label{crpr}
We briefly recall the crossed product construction. Our reference is  \cite{Tak, JMP}. Let $G$ be a discrete group with left regular representation $\la: G\to B(\ell_2(G))$. Given a noncommutative probability space $(\nx,\tau)$, we may assume $\nx\subset B(H)$ for some Hilbert space $H$. Suppose a trace preserving action $\al$ of $G$ on $\nx$ is given, i.e., we have a group homomorphism $\al: G\to \Aut(\nx)$ (the $*$-automorphism groups of $\nx$) with $\tau(x)=\tau(\al_g(x))$ for all $x\in\nx, g\in G$. Identify $\ell_2(G)\otimes H$ with $\ell_2(G; H)$. Consider the representation $\pi$ of $\nx$ on $\ell_2(G;H)$ given by
\[
\pi(x)=\sum_{g\in G}\al_{g^{-1}}(x)\otimes e_{g,g},
\]
where $e_{g,h}$ is the matrix unit of $B(\ell_2(G))$. In other words, $\pi(x)\xi(g)=\al_{g^{-1}}(x)\xi(g)$ for $x\in \nx, \xi\in \ell_2(G;H)$. Then the crossed product of $\nx$ by $G$, denoted by $\nx\rtimes_\al G$, is defined as the weak operator closure of $1_\nx\otimes \la(G)$ and $\pi(\nx)$ in $B(\ell_2(G;H))$. We usually drop the subscript $\al$ if there is no ambiguity. Clearly, $\nx\rtimes G$ is a von Neumann subalgebra of $\nx\overline \otimes B(\ell_2(G))$. In the special case $\nx=\cz$, the complex number algebra, $\cz\rtimes G$ reduces to the group von Neumann algebra $\lx(G)$. Therefore, $\lx(G)$ is a von Neumann subalgebra of $\nx\rtimes G$ and there exists a unique conditional expectation $E_{\lx(G)}:\nx\rtimes G\to \lx(G)$. A generic element of $\nx\rtimes G$ can be written as
\begin{align*}
  \sum_{g\in G}f_g\rtimes \la(g)&=\sum_{g\in G}\pi(f_g)\la(g) = \sum_{g,h,h'} (\al_{h^{-1}}(f_g)\otimes e_{h,h})(1_\nx\otimes e_{gh',h'})\\
  &=\sum_{g,h}\al_{h^{-1}}(f_g)\otimes e_{h,g^{-1}h}.
\end{align*}
There is a canonical trace on $\nx\rtimes G$ given by
\[
\tau\rtimes\tau_G(f\rtimes \la(g))=\tau\otimes \tau_G(f\otimes\la(g))=\tau(f)\de_{g=e},
 \]
where we denote by $\tau_G$ the canonical trace on $\lx(G)$. The arithmetic in $\nx\rtimes G$ is given by
\[
(f\rtimes \la(g))^*=\al_{g^{-1}}(f^*)\rtimes \la(g^{-1})
 \]
and
\[
(f_1\rtimes \la(g_1))(f_2\rtimes \la(g_2))=(f_1\al_{g_1}(f_2))\rtimes \la(g_1g_2).
\]
In what follows, we may simply write $f\la(g)$ instead of $f\rtimes \la(g)$. The group measure space in this paper refers to a special case of the crossed product, i.e., $\nx=L_\8(\Om, \mu)$ for some (standard) probability space $(\Om, \mu)$ .

\subsection{Gaussian measure space construction}
Our reference of this subsection is \cite{Str}*{Chapter 8} and \cite{Nua}*{Chapter 1}. Let $H$ be a real Hilbert space of dimension $d$, where $d\in\nz\cup\{+\8\}$. Identify $H$ as $\ell_2(d)$. Following the well known Gaussian measure space construction (see, e.g., \cites{Val}), we consider the linear map $B: \ell_2(d)\to L_2(\rz^d,\ga_d)$ given by $B(h)(y)=\sum_{i=1}^d\lge h,e_i\rge y_i$, where $(e_i)$ is an orthonormal basis of $\ell_2(d)$ and $y_i$ is the $i$-th coordinate map. If $d<\8$, $B(h)(y)=\lge h,y\rge$; if $d=\8$, $(\rz^d, \ga_d)$ is the measure space obtained from Kolmogorov's construction for which all the cylinder set measures are standard Gaussian measures. Note that $\lge B(h), B(k)\rge_{L_2(\rz^d,\ga_d)} =\lge h,k\rge_{\ell_2(d)}$. Let $\al: G\to O(H)$ be an orthogonal representation of $G$ on $H$. Then there exists a $G$-action $\al^*$ on $(\rz^d,\ga_d)$ preserving the Gaussian measure $\ga_d$; see \cite{Str}*{Theorem 8.3.14}. By abuse of notation, we simply write $\al$ for $\al^*$ because they are indeed the same if $d<\8$. The action $\al$ on $(\rz^d,\ga_d)$ induces an action $\hat{\al}$ on $L_2(\rz^d,\ga_d)$, such that $\hat{\al}_g(B(h))=B(\al_g(h))$ and
\begin{equation}\label{eqv0}
\hat{\al}_g(f)(x)=f(\al(g^{-1})x) = f(\al_{g^{-1}}(x) )
\end{equation}
for $f\in L_2(\rz^d,\ga_d)$. Clearly, $\hat\al$ extends naturally to isometric actions on $L_p(\rz^d,\ga_d)$ for $1\le p\le \8$. In the following we will consider the von Neumann algebra $\mx = L_\8(\rz^d, \ga_d)\rtimes_{\hat\al} G$ and simply forget the subscript $\hat\al$ in the notation of $\mx$ if there is no ambiguity.

Let $P_t$ be the Ornstein--Uhlenbeck semigroup acting on $L_\8(\rz^d,\ga_d)$; see \cites{Fan, Nua} for the case $d=\8$. Then $P_t\otimes id_{\ell_2(G)}$ is a semigroup acting on $L_\8(\rz^d,\ga_d) \overline\otimes B(\ell_2(G))$. Since the action $\al: G\curvearrowright (\rz^d,\ga_d)$ is linear and measure preserving, by the Mehler formula or by the functoriality of the Gaussian functor $\Ga$ \cite{BKS}, $P_t$ is $G$-equivariant, i.e.,
\begin{equation}\label{eqva}
  P_t\circ \hat\al_g=\hat\al_g\circ P_t.
\end{equation}
This will be the starting point of the $L_p$ Poincar\'e inequalities \eqref{poin} for group measure spaces in the next section, because our extension of the Ornstein--Uhlenbeck semigroups to the group measure spaces relies on \eqref{eqva}. Indeed, \eqref{eqva} implies
$$P_t\otimes id_{\ell_2(G)}(L_\8(\rz^d,\ga_d)\rtimes G )\subset L_\8(\rz^d,\ga_d)\rtimes G.$$
Define $P_t\rtimes id_{G}=P_t\otimes id_{\ell_2(G)}|_{L_\8(\rz^d,\ga_d)\rtimes G}$ and write $T_t=P_t\rtimes id_{G}$. Since the fixed point algebra of $P_t$ is trivial, the fixed point algebra of $T_t$ is $\lx(G)$. It is well known that $(T_t)$ extends to contractions on $L_p(L_\8(\rz^d,\ga_d)\rtimes G)$ and $\lim_{t\to \8} T_t f=E_{\lx(G)}f$ for $f\in L_p(L_\8(\rz^d,\ga_d)\rtimes G)$.

\subsection{Groups with affine representations}
Let $G$ be a countable discrete group with the conditional negative length (cn-length) function $\psi: G\to\rz_+$. Recall that $\psi$ is conditional negative if $\sum_g a_g=0\Rightarrow \sum_{g,h}\bar a_g a_h\psi(g^{-1}h)\le 0$. Then $\psi$ determines an affine representation which is given by an orthogonal representation $\al:G\to O(H)$ over a real Hilbert space $H$ together with a map $b_\psi:G\to H$ satisfying the cocycle law, i.e., $b_\psi(gh)=b_\psi(g)+\al_g(b_\psi(h))$; see, e.g., \cite{BO}. By the above Gaussian measure space construction, we get the finite von Neumann algebra $L_\8(\rz^d, \ga_d)\rtimes G$. Define the Gaussian derivation
\begin{align*}
  \de_\psi: \lx(G)&\to M_\8:=\cap_{0<p<\8}L_p(L_\8(\rz^d,\ga_d)\rtimes G),\\
  \la(g)&\mapsto B(b_\psi(g))\rtimes \la(g).
\end{align*}
Clearly, $\de_\psi$ is well-defined. Note that $M_\8$ is a $\lx(G)\dash\lx(G)$ bimodule with left and right actions given by $\la(h)(f\rtimes \la(g))=f\rtimes\la(hg)$ and $(f\rtimes \la(g))\la(h)=f\rtimes \la(gh)$. Then the derivation property $\de_\psi(f_1f_2)= f_1\de_\psi(f_2)+\de_\psi(f_1)f_2$ can be checked directly from the arithmetic in $L_\8(\rz^d,\ga_d)\rtimes G$. For our later development, we need to construct $b_\psi$ explicitly.
\begin{lemma}\label{fini}
For any $g,h\in G$, $b_\psi(g)\in \ell_2(d)$ and $\al_g(b_{\psi}(h))$ have at most finitely many nonzero coordinates.
\end{lemma}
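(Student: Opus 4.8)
The plan is to exploit the freedom in the identification $H\cong\ell_2(d)$ of Subsection 2.3: the assertion is plainly false for an arbitrary orthonormal basis, so the real content of the lemma is that the orthonormal basis realizing $H\cong\ell_2(d)$ can be, and should be, manufactured out of the cocycle values themselves. Recall that in the standard construction the Hilbert space $H=H_\psi$ is the closed linear span of $\{b_\psi(g):g\in G\}$ (concretely, $b_\psi(g)$ is the class of $\de_g-\de_e$ in the completion of the finitely supported mean-zero functions on $G$ equipped with the positive semidefinite form attached to $\psi$). Since $G$ is countable, this span is separable, which is exactly why $d\in\nz\cup\{\8\}$ and why the coordinate picture is available at all.

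First I would fix the basis explicitly. Enumerate $G=\{g_1,g_2,\dots\}$ and apply the Gram--Schmidt process to the sequence $b_\psi(g_1),b_\psi(g_2),\dots$, discarding the zero vector $b_\psi(e)$ and any term already lying in the span of its predecessors. This yields an orthonormal basis $(e_i)$ of $H$ with the decisive property that each partial span $\mathrm{span}\{e_1,\dots,e_n\}$ coincides with $\mathrm{span}\{b_\psi(g_1),\dots,b_\psi(g_k)\}$ for a suitable finite $k$. Consequently every $b_\psi(g)$ sits in some finite-dimensional coordinate subspace $\mathrm{span}\{e_1,\dots,e_N\}$ and therefore has at most $N$ nonzero coordinates. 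This settles the first assertion, and it is precisely where the identification $H\cong\ell_2(d)$ is to be read off.

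The second assertion then follows formally from the cocycle law. Solving $b_\psi(gh)=b_\psi(g)+\al_g(b_\psi(h))$ gives $\al_g(b_\psi(h))=b_\psi(gh)-b_\psi(g)$. Both $b_\psi(gh)$ and $b_\psi(g)$ have finitely many nonzero coordinates by the first part, and the vectors of $\ell_2(d)$ with finitely many nonzero coordinates form a linear subspace; hence the difference $\al_g(b_\psi(h))$ again has finitely many nonzero coordinates. The argument is short, and the only genuine point—the main obstacle, in the sense that overlooking it would render the lemma vacuous—is the recognition that finiteness is not intrinsic to $H$ but is engineered by building the orthonormal basis from the countable family $\{b_\psi(g)\}$ via Gram--Schmidt. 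Once the basis is chosen in this way, both claims are immediate, the second reducing to the first through the cocycle identity.
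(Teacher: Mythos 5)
Your proposal is correct and follows essentially the same route as the paper: the paper also constructs $H_\psi$ as the completion of $\rz G/N_\psi$ with $b_\psi(g)=\de_g+N_\psi$, applies Gram--Schmidt to the enumerated family $(b_\psi(g_k))_k$ to get $b_\psi(g_k)=\sum_{j=1}^k b_{kj}e_j$, and reads off the second assertion from $\al_g(b_\psi(h))=b_\psi(gh)-b_\psi(g)$. Your observation that the finiteness is engineered by the choice of basis rather than intrinsic to $H$ is exactly the point of the paper's construction.
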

\begin{proof}
Let $\rz G$ be the algebraic group algebra of $G$, i.e.,
\[
\rz G=\{x: x=\sum_{g} c_g\de_g, c_g\in \rz\}
\]
where the sum is over finite many elements. Let $K(g,h)=\frac12(\psi(g)+\psi(h)-\psi(g^{-1}h))$ for $g,h\in G$ and define
\[
 [\sum_g a_g \de_g, \sum_{g'}a_{g'}\de_{g'}]=\sum_{g,g'}a_g a_{g'} K(g,g').
\]
Since $\psi$ is conditional negative, $K$ is a positive semidefinite matrix. Put $N_\psi=\{x\in \rz G: [ x,x] =0\}$. Then we define an inner product on $\rz G/N_\psi$ as $\lge x+N_\psi,y+N_\psi\rge=[x,y]$ for $x,y\in\rz G$. Clearly $\lge\cdot,\cdot\rge$ is a well-defined inner product. Let $H_\psi$ be the norm closure of $\rz G/N_\psi$. We define $b_\psi: G\to H_\psi$ by $b_\psi(g)=\de_g+N_\psi$ and $\al_g(b_{\psi}(h))=b_\psi(gh)-b_\psi(g)$. Then $H_\psi, b_\psi, \al$ thus constructed satisfy the cocycle law. We may utilize the Gram--Schmidt procedure on $(b_\psi(g))_{g\in G}$ and obtain an orthonormal basis $(e_j)$ such that
\[
b_\psi(g_k)=\sum_{j=1}^k b_{kj} e_j,
\]
where $(g_k)$ is an enumeration of $G$. Hence, $b_\psi(g)$ only depends on finitely many $e_j$'s for all $g\in G$ and $H_\psi\cong \ell_2(d)$.
\end{proof}
A direct consequence of this construction is $\|b_\psi(g)\|^2=\psi(g)$ for $g\in G$.

\subsection{Khintchine inequality}
We briefly recall the modified Khintchine inequality derived in \cite{JMP}*{Section 4.1}. Let $(\mx,\tau)$ be a noncommutative probability space. Suppose a discrete group $G$ acts on $\mx$ and preserves the trace $\tau$. By the Gaussian measure space construction explained previously, we may consider the linear map $B:H\to L_2(\Om,\mu)$ given by $B(h)=\sum_k\lge h,e_k\rge \zeta_k$, where $(\zeta_k)$ is a family of centered independent Gaussian random variables in a probability space $(\Om,\mu)$, and $(e_k)$ is an orthonormal basis of $H$. Put
\[
G_p(\mx)\rtimes G=\{\sum_{h\in H}\sum_{g\in G} (B(h)\otimes f_{g,h})\la(g)\} \subset L_p(L_\8(\Om,\mu;\mx)\rtimes G),
\]
where $f_{g,h}$ is affiliated to $\mx$. Recall that conditional expectations between von Neumann algebras extend to contractions between noncommutative $L_p$ spaces. Consider the conditional expectation
\[
E: L_p(L_\8(\Om,\mu;\mx)\rtimes G)\to L_p(\mx\rtimes G)
, \quad E(\sum_g f_g\la(g))=\sum_{g}(\int_\Om f_g d\mu)\la(g).
\]
For $F\in L_\8(\Om,\mu;\mx)\rtimes G$, define the conditional row (resp. column) space $L_p^r(E)$ (resp. $L_p^c(E)$) with norm $\|F\|_{L_p^r(E)}=\|E(FF^*)^{1/2}\|_p$ (resp. $\|F\|_{L_p^c(E)}=\|E(F^*F)^{1/2}\|_p$). Put $L^{rc}_p(E)=L_p^c(E)\cap L_p^r(E)$. Define $RC_p(\mx)\rtimes G$ as $G_p(\mx)\rtimes G$ with the norm inherited from $L_p^{rc}(E)$. The following Khintchine inequality was proved in \cite{JMP}*{Theorem 4.3} with the best order of constant obtained in \cite{JZ1}.
\begin{theorem}\label{khin}
  Let $2\le p<\8$ and $F\in L_\8(\Om,\mu;\mx)\rtimes G$. Then
  \[
  \|F\|_{G_p(\mx)\rtimes G}\le C\sqrt{p} \|F\|_{RC_p(\mx)\rtimes G}.
  \]
\end{theorem}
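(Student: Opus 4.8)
The $RC_p$ norm is, by definition, $\|F\|_{RC_p(\mx)\rtimes G}=\max\{\|E(FF^*)^{1/2}\|_p,\ \|E(F^*F)^{1/2}\|_p\}$, so the assertion is a \emph{conditional Gaussian Khintchine inequality}
\[
\|F\|_p\le C\sqrt p\,\max\big\{\|E(FF^*)^{1/2}\|_p,\ \|E(F^*F)^{1/2}\|_p\big\}.
\]
The plan is to exhibit $F$ as a first Gaussian chaos over $\mx\rtimes G$, to establish the inequality for even integers $p=2m$ by the Gaussian moment method, and then to extrapolate to all $2\le p<\8$ while keeping the $\sqrt p$ order.

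First I would record the algebraic reduction. Since $B$ is linear and $(\zeta_k)$ is orthonormal in $L_2(\Om,\mu)$, collecting coefficients turns any $F$ as above into
\[
F=\sum_j (\zeta_j\otimes 1)\,B_j,\qquad B_j=\sum_{g,h}\lge h,e_j\rge\,(1\otimes f_{g,h})\la(g)\in\mx\rtimes G .
\]
Because each $B_j$ lies in the range $\mx\rtimes G$ of $E$, bimodularity of $E$ together with $\int_\Om \zeta_j\zeta_l\,d\mu=\de_{jl}$ gives $E(F^*F)=\sum_j B_j^*B_j$, and the analogous computation yields the companion quantity $E(FF^*)$. This makes the first-chaos structure explicit and identifies the two quantities on the right-hand side as the column and row square functions of the coefficients.

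For the core estimate I would fix an even integer $p=2m$ and write $\hat\tau$ for the trace of $L_\8(\Om,\mu;\mx)\rtimes G$. Starting from
\[
F^*F=\sum_{a,b}B_a^*\,(\zeta_a\zeta_b\otimes 1)\,B_b,
\]
I would compute $\|F\|_{2m}^{2m}=\hat\tau[(F^*F)^m]$ by integrating out the $2m$ Gaussian factors with the Wick (Isserlis) formula. Since the $G$-action preserves the joint Gaussian law, this expresses $\|F\|_{2m}^{2m}$ as a sum over the $(2m-1)!!$ pair partitions of $\{1,\dots,2m\}$, each contributing, after summing the free indices, a trace $\tau[\,\cdot\,]$ of an ordered product of the $B_a^*$ and $B_b$. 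I would dominate every such contribution by the maximum of $\|(\sum_jB_j^*B_j)^{1/2}\|_{2m}^{2m}$ and $\|(\sum_jB_jB_j^*)^{1/2}\|_{2m}^{2m}$. As there are $(2m-1)!!$ partitions and $((2m-1)!!)^{1/2m}\sim\sqrt{2m/e}\lesssim\sqrt p$, taking $2m$-th roots produces the constant of order $\sqrt p$; complex interpolation between consecutive even exponents, or the extrapolation of \cite{JZ1} tailored to preserve the optimal order, then delivers all $2\le p<\8$.

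The hard part is the per-partition operator estimate inside the moment expansion. Since the $B_j$ do not commute, one cannot freely reorder the string $B_{a_1}^*B_{b_1}B_{a_2}^*\cdots$ to split it into clean row and column factors; crossing pair partitions in particular resist a naive Cauchy--Schwarz and must be absorbed into the row and column square functions by operator-convexity arguments. Controlling \emph{every} partition uniformly by these two square functions \emph{without} degrading the constant from $\sqrt p$ to $p$ is the combinatorial heart of the noncommutative Khintchine inequality; this is exactly what is carried out in \cite{JMP} and, with the sharp order of the constant, in \cite{JZ1}.
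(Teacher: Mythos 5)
You should first note that the paper does not prove this statement at all: Theorem~\ref{khin} is imported verbatim from \cite{JMP}*{Theorem 4.3}, with the sharp order $\sqrt{p}$ of the constant credited to \cite{JZ1}. Measured against that, your proposal does not close the gap either, because its one genuinely hard step is deferred to the same two references. Your algebraic reduction is fine: writing $F=\sum_j \pi(\zeta_j\otimes 1)B_j$ with $B_j\in\mx\rtimes G$, using that the conditional expectation $E$ is an $(\mx\rtimes G)$-bimodule map and that $E(\pi(\zeta_a\zeta_b))=\de_{ab}$, correctly identifies $E(F^*F)=\sum_j B_j^*B_j$ and $E(FF^*)=\sum_j B_jB_j^*$, and the partition count $((2m-1)!!)^{1/2m}\sim\sqrt{2m/e}$ is the right source of the $\sqrt{p}$. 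But the sentence ``I would dominate every such contribution by the maximum of $\|(\sum_jB_j^*B_j)^{1/2}\|_{2m}^{2m}$ and $\|(\sum_jB_jB_j^*)^{1/2}\|_{2m}^{2m}$'' \emph{is} the theorem. For crossing pair partitions this domination is not a consequence of Cauchy--Schwarz or of any reordering argument (the $B_j$ neither commute with each other nor with the $\la(g)$'s), and the known routes to it --- Haagerup--Pisier duality, Buchholz-type combinatorics of partitions, or the central-limit reduction used in \cite{JZ1} --- each constitute the substance of the proof rather than a routine verification. Acknowledging this and citing \cite{JMP} and \cite{JZ1} for it means your argument proves nothing that is not already contained in the citation the paper gives.

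Two further points deserve attention if you want to turn the sketch into a proof. First, the passage from even integers to all $2\le p<\8$ is not as innocent as ``complex interpolation between consecutive even exponents'': the target norm $\|F\|_{RC_p(\mx)\rtimes G}$ lives in the intersection space $L_p^r(E)\cap L_p^c(E)$, and interpolation of such intersections (and of the conditional row/column spaces themselves) requires Junge's theory of conditional $L_p$ spaces; one cannot simply bound $\|F\|_p\le\|F\|_{2m}$ and then replace $\|F\|_{RC_{2m}}$ by $\|F\|_{RC_p}$, since the latter is the smaller quantity. Second, your use of the Wick formula inside the trace of the crossed product implicitly uses that the diagonal $G$-action preserves the joint Gaussian law and that the canonical trace on $L_\8(\Om,\mu;\mx)\rtimes G$ restricts to integration against $\mu$ on the Gaussian variables; you state the former, but it should be made explicit, since it is exactly the equivariance that makes the coefficient operators $B_j$ land in $\mx\rtimes G$ where $E$ is bimodular.
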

We will use the case $\mx=L_\8(\rz^d,\ga_d)$ and $(\Om,\mu)=(\rz^d,\ga_d)$. Then we have $L_\8(\Om,\mu;\mx)\cong L_{\8}(\rz^{2d},\ga_{2d})$ and $L_{\8}(\rz^{2d},\ga_{2d})\rtimes G$ is simply extended from $L_{\8}(\rz^{d},\ga_{d})\rtimes G$ by diagonal action $\hat\al_g(\xi(\cdot)\eta(\cdot))(x,y) =(\hat\al_g\xi)(x)(\hat\al_g\eta)(y).$ It follows from Theorem \ref{khin} that
\begin{align*}
  &\Big\|\sum_{g}[\xi_g(x)\eta_g(y)]\la(g)\Big\|_{L_p(L_{\8}(\rz^{2d},\ga_{2d}) \rtimes G)}\\
\le &~ C\sqrt{p} \max\Big\{\Big\|\sum_{g,h}\Big(\int \hat\al_{g^{-1}}(\bar \xi_g(x)\xi_h(x))\ga_d(dx)\Big)[\hat\al_{g^{-1}}(\bar \eta_g(y)\eta_h(y))]\la(g^{-1}h)\Big\|^{1/2}_{L_{p/2}(L_{\8}(\rz^{d},\ga_{d}) \rtimes G)},\\
&\quad\Big\|\sum_{g,h}\Big(\int \hat\al_{g^{-1}}(\xi_g(x)\bar\xi_h(x)) \ga_d(dx)\Big)[ \hat\al_{g^{-1}}( \eta_g(y)\bar\eta_h(y)]\la(gh^{-1})\Big\|^{1/2}_{L_{p/2}(L_{\8}(\rz^{d},\ga_{d}) \rtimes G)}\Big\}.
\end{align*}
Here we assumed that $\xi$ is affiliated to  $L_\8(\Om,\mu)$ while $\eta$ is affiliated to $\mx$. Note that $\hat\al_g$ preserves the Gaussian measure $\ga_d$. In particular, if $\xi_g = \lge b_\psi(g),\cdot\rge$, then
\[
\int \bar \xi_g(x)\xi_h(x)\ga_d(dx)=\lge b_\psi(g), b_\psi(h)\rge_{\ell_2(d)}.
\]
Similarly, for $\mx=\cz$ and $f\in\lx(G)$, we have
\begin{equation}\label{khgp}
    \|\de_\psi f\|_{L_p(L_\8(\rz^d, \ga_d)\rtimes G)}\le C\sqrt{p}\max\{\|\Ga_\psi(f,f)^{1/2}\|_{L_p(\lx(G))}, \|\Ga_\psi(f^*,f^*)^{1/2}\|_{L_p(\lx(G))}\}.
\end{equation}
The right-hand side of \eqref{khgp} follows from the arithmetic of crossed products as explained in Section \ref{crpr}. A detailed calculation was given in the proof of \cite{JMP}*{Theorem 4.6}.

\subsection{Noncommutative Orlicz spaces}
Our reference of this subsection is \cite{FK}. Let $\mx$ be a semifinite von Neumann algebra with a normal semifinite faithful trace $\tau$. Given a $\tau$-measurable operator $x$, the distribution function of $x$ is defined as
\[
\la_s(x)=\tau(E_{(s,\8)}(|x|)),\quad s>0
\]
where $E_{(s,\8)(|x|)}$ is the spectral projection of $|x|$ corresponding to the interval $(t,\8)$. The generalized singular number of $x$ is given by
\[
\mu_t(x)=\inf\{s\ge0: \la_s(x)\le t\}.
\]
Then \cite{FK}*{Corollary 2.8} asserts that, for any continuous increasing function $f$ on $[0,\8)$ with $f(0)=0$ and any $\tau$-measurable operator $x$, one has
\begin{equation}\label{sing}
  \tau(f(|x|))=\int_0^\8 f(\mu_t(x)) dt.
\end{equation}
Recall that a Young (or Orlicz in some literature) function $\phi:\rz_+\to\bar\rz_+$ is convex, increasing  with $\phi(0)=0$ and $\lim_{t\to\8}\phi(t)=\8$. The noncommutative Orlicz space $L_\phi(\mx,\tau)$ is defined as the space of all $\tau$-measurable operators such that $\tau(\phi(|x|/c))<\8$ for some $c>0$. $L_\phi(\mx,\tau)$ is a Banach space with the norm
\[
\|x\|_\phi = \inf \{c>0: \tau[\phi(|x|/c)]\le 1\}.
\]
$L_\phi(\mx,\tau)$ can also be defined as a noncommutative symmetric function space; see, e.g., \cites{BC,Xu,PS,KS} and the references therein for more information. When $\tau$ is finite, $L_\phi(\mx,\tau)$ can be obtained from the completion of $\mx$ in this norm. In the following, we will mainly consider the Orlicz space with $\phi(t)=e^{t^2}-1$. The following fact is standard. We include a quick proof for completeness.
\begin{prop}\label{orli}
  Let $a,b, x, y$ be $\tau$-measurable operators. Then,
  \begin{enumerate}
    \item $\|x\|_\phi=\|x^*\|_\phi=\||x|\|_\phi$;
    \item if $0\le x\le y$, then $\|x\|_\phi\le \|y\|_\phi$.
  \end{enumerate}
  In particular, $\|ax\|_\phi\le \|a\|_\8\|x\|_\phi$, $\|x b\|_\phi\le \|x\|_\phi \|b\|_\8$.
\end{prop}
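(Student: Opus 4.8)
The plan is to reduce everything to the integral representation \eqref{sing} together with two standard invariance/monotonicity properties of the generalized singular numbers recorded in \cite{FK}, namely $\mu_t(x)=\mu_t(x^*)$ and the implication $0\le x\le y\Rightarrow \mu_t(x)\le\mu_t(y)$ for all $t>0$. Throughout I would apply \eqref{sing} to the function $t\mapsto\phi(t/c)$, which is continuous and increasing and vanishes at $0$ because $\phi(0)=0$, so the representation is legitimate for the specific $\phi(t)=e^{t^2}-1$ under consideration.

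First I would dispose of part (1). The equality $\|x\|_\phi=\||x|\|_\phi$ is immediate, since the defining expression for $\|\cdot\|_\phi$ only sees $|x|$. For $\|x\|_\phi=\|x^*\|_\phi$, I would write $\tau[\phi(|x|/c)]=\int_0^\8\phi(\mu_t(x)/c)\,dt$ via \eqref{sing}, and likewise for $x^*$; since $\mu_t(x)=\mu_t(x^*)$, the two integrals agree for every $c>0$, so the admissible sets $\{c>0:\tau[\phi(|x|/c)]\le1\}$ coincide and hence so do the infima.

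Next, for part (2) I would again use \eqref{sing}. From $0\le x\le y$ and the monotonicity of singular numbers, $\mu_t(x)\le\mu_t(y)$ for all $t$; since $\phi$ is increasing this gives $\phi(\mu_t(x)/c)\le\phi(\mu_t(y)/c)$ pointwise, and integrating yields $\tau[\phi(x/c)]\le\tau[\phi(y/c)]$ for every $c>0$. Hence any $c$ admissible for $y$ is admissible for $x$, so the infimum defining $\|x\|_\phi$ is taken over a larger set and $\|x\|_\phi\le\|y\|_\phi$.

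Finally, the ``in particular'' clauses follow from (1) and (2) together with the homogeneity $\|\la|x|\|_\phi=\la\|x\|_\phi$ for $\la\ge0$, which is read off directly from the definition. For $\|ax\|_\phi$ I would start from the operator inequality $|ax|^2=x^*a^*ax\le\|a\|_\8^2\,|x|^2$ and invoke operator monotonicity of the square root to obtain $|ax|\le\|a\|_\8|x|$; then parts (1) and (2) give $\|ax\|_\phi=\||ax|\|_\phi\le\|a\|_\8\||x|\|_\phi=\|a\|_\8\|x\|_\phi$. The bound on $\|xb\|_\phi$ then follows by applying this to the adjoint: $\|xb\|_\phi=\|b^*x^*\|_\phi\le\|b\|_\8\|x^*\|_\phi=\|b\|_\8\|x\|_\phi$, using (1) at both ends. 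Since every ingredient is a quoted property of singular numbers or a one-line manipulation, there is no genuine obstacle; the only point requiring a little care is verifying that $t\mapsto\phi(t/c)$ meets the hypotheses of \eqref{sing} so that the integral representation is valid for our $\phi$.
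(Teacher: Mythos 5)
Your argument is correct and follows essentially the same route as the paper: both reduce (1) and (2) to the identity \eqref{sing} together with the properties $\mu_t(x)=\mu_t(x^*)=\mu_t(|x|)$ and $0\le x\le y\Rightarrow\mu_t(x)\le\mu_t(y)$ from \cite{FK}, and both derive the ``in particular'' clauses from $x^*a^*ax\le\|a\|_\infty^2x^*x$, operator monotonicity of the square root, and the adjoint identity $\|xb\|_\phi=\|b^*x^*\|_\phi$. No substantive differences.
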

\begin{proof}
  Note that $\psi(x):=\phi(x/c)$ is a Young function. Since $\mu_t(x)=\mu_t(x^*)=\mu_t(|x|)$ (\cite{FK}*{Lemma 2.5}), by \eqref{sing}, we have $\tau(\psi(|x|))=\tau(\psi(|x^*|))$. Then the first assertion follows. If $0\le x\le y$, then $\mu_t(x)\le \mu_t(y)$ for all $t>0$ (\cite{FK}*{Lemma 2.5}). Since $\psi$ is increasing, using \eqref{sing} again, we find $\tau(\psi(x))\le \tau(\psi(y))$. This gives the second assertion. Notice that $0\le x^*a^*ax\le \|a\|_\8^2 x^*x$ and that the square root is operator monotone. We have $0\le |ax|\le \|a\|_\8 |x|$. It follows that $\|ax\|_\phi\le \|a\|_\8\|x\|_\phi$. The last inequality is immediate once we note that $\|x b\|_\phi=\|b^*x^*\|_\phi$.
\end{proof}

\section{Poincar\'e type inequalities }
\subsection{Group measure spaces}
The idea of our proof for the Poincar\'e type inequalities goes back to Pisier \cite{Pi} where he deduced a magic formula to connect the Riesz transform and the Gaussian measure space. This strategy was further developed by Lust-Piquard in various situations. In particular, in \cite{ELP} Efraim and Lust-Piquard proved the Poincar\'e type inequalities for Walsh systems and CAR algebras following her earlier works, which motivates our proof.
\begin{lemma}\label{eqv1}
Let $\hat\al: G\to \Aut(L_\8(\rz^d,\ga_d))$ be the measure preserving action given by \eqref{eqv0}. Suppose $f\in L_\8(\rz^d,\ga_d)$ is differentiable and depends on finitely many coordinates if $d=\8$. Then for $g\in G$,
\[
\frac{\partial \hat\al_g(f)}{\partial x_i} (x) = \lge(\nabla f)(\al_{g^{-1}}(x)), \al_{g^{-1}}(e_i)\rge,
\]
where $(e_i)$ is the standard basis of $\rz^d$. Therefore, $(\nabla\hat\al_g(f))(x)=\al_g[(\nabla f)(\al_{g^{-1}}(x))]$ and
\[
\lge \nabla \hat\al_g(f))(x), y\rge = \lge (\nabla f) (\al_{g^{-1}}(x)),\al_{g^{-1}}(y) \rge,
\]
where $\lge\cdot,\cdot\rge$ is the inner product in $\ell_2(d)$.
\end{lemma}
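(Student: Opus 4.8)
The plan is to reduce everything to the chain rule together with the orthogonality of $\al_g$. Write $A=\al_{g^{-1}}$, viewed as an orthogonal linear map on $\ell_2(d)$, so that by \eqref{eqv0} we have $\hat\al_g(f)(x)=f(Ax)$. Since $f$ depends on finitely many coordinates when $d=\8$, the gradient $\nabla f$ has only finitely many nonvanishing components, and differentiating $x\mapsto f(Ax)$ in the direction $e_i$ is legitimate: the chain rule gives
\[
\frac{\partial \hat\al_g(f)}{\partial x_i}(x)=\sum_j \frac{\partial f}{\partial y_j}(Ax)\, A_{ji},
\]
where the sum over $j$ is finite because $\partial f/\partial y_j$ vanishes for all but finitely many $j$. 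This is the only place the finite-coordinate hypothesis is used, and it guarantees that no convergence issue arises even when $d=\8$.

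Next I would identify the matrix entries $A_{ji}=\lge \al_{g^{-1}}(e_i),e_j\rge$ and write $\frac{\partial f}{\partial y_j}(Ax)=\lge (\nabla f)(Ax),e_j\rge$. Substituting and applying Parseval's identity $\sum_j\lge u,e_j\rge\lge v,e_j\rge=\lge u,v\rge$ with $u=(\nabla f)(\al_{g^{-1}}(x))$ and $v=\al_{g^{-1}}(e_i)$ collapses the sum to $\lge (\nabla f)(\al_{g^{-1}}(x)),\al_{g^{-1}}(e_i)\rge$, which is exactly the first displayed formula. For the gradient identity I would then assemble the components and use that $\al$ is orthogonal, so $\al_g^*=\al_{g^{-1}}$. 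Writing $w=(\nabla f)(\al_{g^{-1}}(x))$, the $i$-th component is $\lge w,\al_{g^{-1}}(e_i)\rge=\lge \al_g(w),e_i\rge$, whence $(\nabla\hat\al_g(f))(x)=\sum_i\lge \al_g(w),e_i\rge e_i=\al_g(w)=\al_g[(\nabla f)(\al_{g^{-1}}(x))]$; orthogonality also ensures $\al_g(w)\in\ell_2(d)$, so the gradient is a well-defined Hilbert space element. Finally, pairing with an arbitrary $y$ and moving $\al_g$ across the inner product by orthogonality gives $\lge (\nabla\hat\al_g(f))(x),y\rge=\lge \al_g(w),y\rge=\lge w,\al_{g^{-1}}(y)\rge$, the last claim.

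The computation is elementary, and I do not expect a serious obstacle; the only point demanding care is the $d=\8$ case, where one must confirm that the chain rule applies coordinatewise and that every sum occurring is genuinely finite or convergent. The finite-coordinate assumption on $f$ controls the differentiation step, while the orthogonality of $\al$ controls convergence of the reconstructed gradient, so the whole argument is essentially bookkeeping around these two facts.
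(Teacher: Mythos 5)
Your proof is correct and takes essentially the same approach as the paper: the paper's proof is also ``just the chain rule,'' computing $\frac{\partial}{\partial x_i}f(\al_{g^{-1}}(x))$ as a directional derivative of $f$ at $\al_{g^{-1}}(x)$ in the direction $\al_{g^{-1}}(e_i)$ and then moving $\al_{g^{-1}}$ across the inner product by orthogonality. Your coordinatewise version with Parseval and the explicit finiteness check for $d=\infty$ is the same computation written in components.
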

\begin{proof}
This is just the chain rule. Here is the direct calculation.
\begin{align*}
  &\frac{\partial }{\partial x_i} \hat\al_g(f)(x)=\lim_{t\to 0} \frac{\hat\al_g(f)(x+t e_i)-\hat\al_g(f)(x)}{t} \\
  =&\lim_{t\to 0} \frac{f(\al(g^{-1})(x)+t \al(g^{-1})(e_i))-f(\al(g^{-1})x)}{t}\\
  =&\lge (\nabla f)(\al(g^{-1})x), \al(g^{-1})e_i\rge=\lge \al(g)[(\nabla f)(\al(g^{-1})x)], e_i\rge.
\end{align*}
This gives the gradient of $\hat\al_g(f)$ at $x$.
\end{proof}
We follow the notation in Section 2. Let $f(x,y)$ be a measurable function on $(\rz^d\times\rz^d,\ga_d\times \ga_d)$. Recall that $\rx_\ta$ is the measure preserving automorphism on $L_\8(\rz^d\times\rz^d,\ga_d\times \ga_d)$ given by $(\rx_\ta f)(x,y)=f(x\cos\ta+y\sin\ta,-x\sin\ta+y\cos\ta)$, and that $T_t=P_t\rtimes id_G$ is the semigroup acting on $L_\8(\rz^d,\ga_d)\rtimes G$, which is a natural extension of the Ornstein--Uhlenbeck semigroup $P_t$ on $L_\8(\rz^d,\ga_d)$.
\begin{lemma}\label{diff}
  Suppose that $\xi$ is a bounded $C^1$ function on $\rz^d$ with $|\nabla \xi|$ bounded by some polynomial and $\xi$ depends on only finitely many coordinates. Then for $g\in G$ and $0<\ta<\pi/2$,
\begin{align*}
\frac{\partial}{\partial\ta}T_{-\ln\cos\ta} (\xi\rtimes \la(g)) (x) = \sum_h \int\rx_\ta[\lge (\nabla \xi)(\al_{h}(x)), \al_{h}(y)\rge] \ga_d(dy)\otimes e_{h,g^{-1}h} .
\end{align*}
\end{lemma}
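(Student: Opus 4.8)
The plan is to reduce the lemma to the single-coordinate Mehler computation already carried out in the proof of Proposition \ref{clpo}, together with the chain rule from Lemma \ref{eqv1}. The whole statement is a bookkeeping computation once these two ingredients are in place, so the work lies in tracking the action indices correctly through the crossed-product representation.

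First I would expand $\xi\rtimes\la(g)$ inside $L_\8(\rz^d,\ga_d)\overline\otimes B(\ell_2(G))$ using the matrix form of a generic crossed-product element recalled in Section \ref{crpr}; for a single group element this reads
\[
\xi\rtimes\la(g)=\sum_h \hat\al_{h^{-1}}(\xi)\otimes e_{h,g^{-1}h}.
\]
Since $T_t=P_t\otimes id_{\ell_2(G)}$ restricted to the crossed product and the matrix units $e_{h,g^{-1}h}$ are untouched by $P_t$, applying $T_{-\ln\cos\ta}$ acts entrywise:
\[
T_{-\ln\cos\ta}(\xi\rtimes\la(g))=\sum_h \cos^L\ta\big(\hat\al_{h^{-1}}(\xi)\big)\otimes e_{h,g^{-1}h},
\]
where $\cos^L\ta=P_{-\ln\cos\ta}$ as in \eqref{mehl}. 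Because the $e_{h,g^{-1}h}$ span mutually orthogonal corners of $B(\ell_2(G))$, the $\ta$-derivative can be taken entry by entry, reducing everything to computing $\frac{\partial}{\partial\ta}\cos^L\ta(\hat\al_{h^{-1}}(\xi))(x)$ for each fixed $h$.

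Next I would invoke the Mehler formula \eqref{mehl} and differentiate under the integral exactly as in the proof of Proposition \ref{clpo}, obtaining for any admissible $f$
\[
\frac{\partial}{\partial\ta}\cos^L\ta(f)(x)=\int_{\rz^d}\rx_\ta\big(\nabla f(x)\cdot y\big)\,\ga_d(dy).
\]
Applying this with $f=\hat\al_{h^{-1}}(\xi)$ and substituting the chain-rule identity of Lemma \ref{eqv1} in its inner-product form, namely $\lge\nabla\hat\al_{h^{-1}}(\xi)(x),y\rge=\lge(\nabla\xi)(\al_h(x)),\al_h(y)\rge$ (using orthogonality of $\al_h$), yields exactly the claimed entrywise derivative. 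Reassembling the sum over $h$ against $e_{h,g^{-1}h}$ then gives the stated formula.

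The only genuine technical point, rather than a conceptual obstacle, is justifying differentiation under the integral sign and passing the $\ta$-derivative through the (possibly infinite) sum over $h$. The hypotheses that $\xi$ is bounded $C^1$, depends on finitely many coordinates, and has $|\nabla\xi|$ dominated by a polynomial guarantee the integrability needed for the scalar Mehler differentiation, precisely as in Proposition \ref{clpo}. The interchange with the sum over $h$ is harmless since the terms live in orthogonal matrix corners, so convergence and differentiation may be verified coordinatewise in $B(\ell_2(G))$.
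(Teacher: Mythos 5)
Your proposal is correct and follows essentially the same route as the paper: expand the crossed-product element in matrix form, apply the Mehler formula entrywise, differentiate under the integral, and convert $\nabla\hat\al_{h^{-1}}(\xi)$ via the chain rule of Lemma \ref{eqv1} into the $\rx_\ta$-twisted inner product. The only cosmetic difference is that you quote the scalar derivative formula from Proposition \ref{clpo} and then substitute, while the paper differentiates the integrand directly and applies Lemma \ref{eqv1} inside the resulting expression; these are the same computation.
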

\begin{proof}
Since the Mehler formula \eqref{mehl} can be extended naturally to $T_t$, we have
\begin{align*}
T_{-\ln\cos\ta} (\xi\rtimes \la(g)) (x)&= \sum_h P_{-\ln\cos\ta}((\hat{\al}_{h^{-1}}\xi))(x)\otimes e_{h,g^{-1}h}\\
&= \sum_h \int (\hat\al_{h^{-1}}\xi)(x \cos\ta+y\sin\ta)\ga_d(dy)\otimes e_{h,g^{-1}h}.
\end{align*}
By Lemma \ref{eqv1}, we have
\begin{align*}
\frac{\partial}{\partial\ta}(\hat\al_{h^{-1}}\xi)(x\cos\ta+y\sin\ta)&=\lge (\nabla\hat\al_{h^{-1}}(\xi))(x\cos\ta+y\sin\ta), -x\sin\ta+y\cos\ta\rge\\
&=\lge (\nabla \xi)(\al_{h}(x\cos\ta+y\sin\ta)), \al_{h}(-x\sin\ta+y\cos\ta)\rge\\
&=\rx_\ta(\lge (\nabla \xi)(\al_{h}(x)), \al_{h}(y)\rge).
\end{align*}
We have assumed $\xi$ to be a nice function so that we can differentiate entrywise under integral and find
\[
\frac{\partial}{\partial\ta}T_{-\ln\cos\ta} (\xi\rtimes \la(g)) (x) = \sum_h \int\rx_\ta[\lge (\nabla \xi)(\al_{h}(x)), \al_{h}(y)\rge] \ga_d(dy)\otimes e_{h,g^{-1}h} . \qedhere
\]
\end{proof}
Recall that the fixed point algebra of $T_t$ is $\lx(G)$. Let $L$ denote the generator of $P_t$ and $L\rtimes I$ the generator of $T_t$.
\begin{theorem}\label{gmpo}
  Let $2\le p<\8$ and $f\in L_\8(\rz^d,\ga_d)\rtimes_{\hat\al} G$ where the action $\hat\al$ is the measure preserving action determined by the orthogonal representation $\al$ given by \eqref{eqv0}. Then
  \begin{equation}
  \begin{split}
    &\|f-E_{\rm \lx(G)} f\|_{L_p(L_\8(\rz^{d},\ga_{d})\rtimes G)}\\
  \le&~ C\sqrt{p} \max\{\|\Ga_{L\rtimes I}(f,f)^{1/2}\|_{L_p(L_\8(\rz^d,\ga_d)\rtimes G)}, \|\Ga_{L\rtimes I}(f^*,f^*)^{1/2}\|_{L_p(L_\8(\rz^d,\ga_d)\rtimes G)} \}. 
    \end{split}\tag{\ref{poi0}}
  \end{equation}
\end{theorem}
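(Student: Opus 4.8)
The plan is to transcribe the proof of Proposition \ref{clpo} into the crossed-product setting, replacing the scalar Khintchine inequality by its operator-valued form, Theorem \ref{khin}. By the density argument used in Proposition \ref{clpo} (and the hypotheses of Lemma \ref{diff}), I may assume $f=\sum_g \xi_g\rtimes\la(g)$ is a finite sum in which each $\xi_g$ is a bounded $C^1$ function depending on finitely many coordinates with $|\nabla\xi_g|$ of polynomial growth, so that every differentiation under the integral sign below is legitimate.

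The first step is the \emph{magic formula}. Since $T_0=\mathrm{id}$ while $\lim_{t\to\8}T_t=E_{\lx(G)}$, and since $-\ln\cos\ta$ increases from $0$ to $+\8$ as $\ta$ runs over $[0,\pi/2]$, the fundamental theorem of calculus gives
\[
f-E_{\lx(G)}f=-\int_0^{\pi/2}\frac{\partial}{\partial\ta}T_{-\ln\cos\ta}(f)\,d\ta,
\]
which is exactly the specialization $\phi\equiv 1$ of the integral appearing in Proposition \ref{clpo}. Invoking Lemma \ref{diff} term by term, the integrand becomes
\[
\frac{\partial}{\partial\ta}T_{-\ln\cos\ta}(f)(x)=\sum_{g,h}\int\rx_\ta\big[\lge(\nabla\xi_g)(\al_h(x)),\al_h(y)\rge\big]\,\ga_d(dy)\otimes e_{h,g^{-1}h}.
\]

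Next I would peel off the outer integrals. Applying Minkowski's integral inequality in $\ta$ and in $y$, exactly as in Proposition \ref{clpo}, and using that $\rx_\ta$ is a measure-preserving automorphism of $(\rz^d\times\rz^d,\ga_d\times\ga_d)$ so that the relevant $L_p$-norm is independent of $\ta$, the $\ta$-integration contributes only the harmless factor $\|1\|_{L_1[0,\pi/2]}=\pi/2$, and the whole problem reduces to an $L_p(L_\8(\rz^{2d},\ga_{2d})\rtimes G)$-estimate for the single Gaussian object
\[
F:=\sum_{g,h}\lge(\nabla\xi_g)(\al_h(x)),\al_h(y)\rge\otimes e_{h,g^{-1}h}.
\]
Rewriting $\lge(\nabla\xi_g)(\al_h(x)),\al_h(y)\rge=\lge(\nabla\hat\al_{h^{-1}}\xi_g)(x),y\rge$ by Lemma \ref{eqv1}, one sees that $F$ has precisely the shape required by Theorem \ref{khin}: the variable $y$ is the Gaussian (first-chaos) variable and the $x$-dependent coefficients are affiliated to $L_\8(\rz^d,\ga_d)\rtimes G$. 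Theorem \ref{khin} then bounds $\|F\|_p$ by $C\sqrt{p}$ times the larger of the two conditional norms $\|F\|_{L_p^r(E)}$ and $\|F\|_{L_p^c(E)}$.

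The final and most delicate step is to identify these two conditional norms with $\|\Ga_{L\rtimes I}(f,f)^{1/2}\|_p$ and $\|\Ga_{L\rtimes I}(f^*,f^*)^{1/2}\|_p$. Starting from $\Ga_A(f_1,f_2)=\tfrac12[A(f_1^*)f_2+f_1^*A(f_2)-A(f_1^*f_2)]$ with $A=L\rtimes I$, and using that for the Ornstein--Uhlenbeck generator the carr\'e du champ is $\Ga_L(\xi,\xi)=|\nabla\xi|^2$, I would compute $\Ga_{L\rtimes I}(f,f)$ directly from the crossed-product arithmetic of Section \ref{crpr}. The conditional expectation $E$ integrates out the $y$-variable, so the cross terms in $E(F^*F)$ and $E(FF^*)$ collapse, after the orthogonal (hence $\ga_d$-preserving) substitutions $\al_h$, into inner products of the form $\int\hat\al_{h^{-1}}\big(\lge\nabla\xi_g,\nabla\xi_{h}\rge\big)$ that coincide with the matrix entries of the two gradient forms; this is the operator-valued analogue of the scalar identity recorded right after Theorem \ref{khin} and carried out in detail in \cite{JMP}*{Theorem 4.6}. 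Matching the column norm to $\Ga_{L\rtimes I}(f,f)$ and the row norm to $\Ga_{L\rtimes I}(f^*,f^*)$ then yields \eqref{poi0}. I expect this bookkeeping---pairing the Gaussian integration against $E$, tracking the $\al_h$-twists hidden in the matrix-unit indices $e_{h,g^{-1}h}$, and verifying that the two conditional norms are genuinely the two gradient forms---to be the main obstacle, while every other step is a faithful noncommutative rewriting of Proposition \ref{clpo}.
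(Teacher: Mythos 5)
Your proposal is correct and follows essentially the same route as the paper: the Mehler/magic formula $E_{\lx(G)}f-f=\int_0^{\pi/2}\partial_\ta T_{-\ln\cos\ta}f\,d\ta$, Lemma \ref{diff} to produce the Gaussian first-chaos object, the measure-preserving rotation $\rx_\ta$ to make the $\ta$-integration contribute only the factor $\pi/2$, Theorem \ref{khin} for the $C\sqrt p$ bound, and the identification of the conditional row/column norms with $\Ga_{L\rtimes I}(f^*,f^*)$ and $\Ga_{L\rtimes I}(f,f)$. The only cosmetic difference is that the paper absorbs the $y$-integration via the contractive conditional expectation $E_{L_\8^x(\rz^d,\ga_d)\rtimes G}$ rather than an operator-valued Minkowski inequality, but this changes nothing of substance.
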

\begin{proof}
We follow the strategy of Proposition \ref{clpo} and take advantage of the techniques developed in \cite{JMP}. By approximation, we may assume that $f=\sum_{g\in G} f_g\la(g)\in L_\8(\rz^d,\ga_d)\rtimes G$ for finitely many $g\in G$ and that $f_g$'s satisfy the assumption of Lemma \ref{diff}. Note that $L_\8(\rz^d,\ga_d)\rtimes G \subset L_\8(\rz^d,\ga_d)\overline\otimes B(\ell_2(G))$. Then we have
\[
T_{-\ln\cos(\pi/2-\eps)}f- f=\int_0^{\pi/2-\eps} \frac{\partial}{\partial\ta}T_{-\ln\cos\ta}f d\ta.
\]
Sending $\eps\to 0$, we have
\[
E_{\lx(G)}f-f=\int_0^{\pi/2} \frac{\partial}{\partial\ta}T_{-\ln\cos\ta}f d\ta.
\]
We can extend the action $\hat\al: G\to \Aut(L_\8(\rz^d,\ga_d))$ to $$G\curvearrowright L_\8(\rz^{2d},\ga_{2d})=L_\8(\rz^{d},\ga_{d})\overline\otimes L_\8(\rz^{d},\ga_{d})$$
by diagonal action
$$\hat\al_g(\xi(\cdot)\eta(\cdot))(x,y)=(\hat\al_g\xi)(x)(\hat\al_g\eta)(y).$$
Noticing that $\hat\al_{g}\circ \rx_\ta=\rx_\ta\circ\hat\al_{g}$ for $g\in G$ and $\ta\in [0,\pi/2]$, by Lemma \ref{diff}, we have
\begin{align}
E_{\lx(G)}f-f&=\int_0^{\pi/2} \sum_{g,h}\int  \rx_\ta[ \lge \nabla f_g (\al_h(x)), \al_{h}(y)\rge] \ga_d(dy)\otimes e_{h,g^{-1}h}d\ta\nonumber\\
&=\int_0^{\pi/2} \sum_{g,h}\int \hat\al_{h^{-1}}[\rx_\ta( \nabla f_g(x), y\rge)] \ga_d(dy)\otimes e_{h,g^{-1}h}d\ta\nonumber\\
&= \int_0^{\pi/2} E_{L_\8^x(\rz^d,\ga_d)\rtimes G} \Big[\sum_{g} \rx_\ta[\lge \nabla f_g(x), y\rge]\rtimes\la(g)\Big] d\ta\nonumber\\
&= \int_0^{\pi/2} E_{L_\8^x(\rz^d,\ga_d)\rtimes G}\Big[\sum_{g} (\rx_\ta\otimes id_{\ell_2(G)})[(\lge \nabla f_g(x),y\rge)\rtimes \la(g)]\Big] d\ta.\label{minu}
\end{align}
Here we used the facts that $\sum_{g,h} \hat\al_{h^{-1}}[\rx_\ta( \lge \nabla f_g (x), y\rge)]\otimes e_{h,g^{-1}h}$ is in $L_p(L_\8(\rz^{2d},\ga_d)\rtimes G)$ for $2\le p <\8$ and that the conditional expectation
\[
E_{L_\8^x(\rz^d,\ga_d)\rtimes G}: L_\8(\rz^{2d},\ga_{2d})\rtimes G\to L_\8^x(\rz^d,\ga_d)\rtimes G
\]
extends to a contraction on $L_p(L_\8(\rz^{2d},\ga_{2d})\rtimes G)$. It follows from \eqref{eqva} or the chain rule that $\hat\al_g\circ L= L\circ \hat\al_g$. By the arithmetic of crossed products as explained in Section \ref{crpr}, we have
\begin{align*}
  &\Ga_{L\rtimes I}(f,f)\\
  =& \frac12\sum_{g,h} [L(\hat\al_{g^{-1}}(\bar f_g))\hat \al_{g^{-1}}(f_h)+\hat\al_{g^{-1}}(\bar f_g)\hat\al_{g^{-1}}(Lf_h)-L(\hat\al_{g^{-1}}(\bar f_g)\hat\al_{g^{-1}}(f_h))]\la(g^{-1}h)\\
  =&\sum_{g,h}\hat\al_{g^{-1}}(\lge \nabla f_h, \nabla f_g\rge )\la(g^{-1}h) \\
  =&\sum_{g,h}\lge \nabla f_h(\al_g(\cdot)), \nabla f_g(\al_g(\cdot))\rge \rtimes \la(g^{-1}h).
\end{align*}
We write $E_x$ for $E_{L_\8^x(\rz^d,\ga_d)\rtimes G}$. By the definition of $L_p^c(E)$, we have
\begin{align}
  &\|\sum_g \lge \nabla f_g(x),y\rge \rtimes \la(g)\|_{L_p^c(E_x)}\nonumber\\ 
  &= \|\sum_{g,h} \int\hat\al_{g^{-1}}(\overline{\lge \nabla f_g(x),y\rge}\lge \nabla f_h(x),y\rge) \ga_d(dy) \rtimes \la(g^{-1}h)\|^{1/2}_{L_{p/2}(L_\8(\rz^d,\ga_d)\rtimes G)}\nonumber\\
  &= \|\sum_{g,h}\lge \nabla f_h(\al_g(x)), \nabla f_g(\al_g(x))\rge \rtimes \la(g^{-1}h)\|^{1/2}_{L_{p/2}(L_\8(\rz^d,\ga_d)\rtimes G)}\nonumber\\
  &=\|\Ga_{L\rtimes I}(f,f)^{1/2}\|_{L_{p}(L_\8(\rz^d,\ga_d)\rtimes G)}. \label{gax}
\end{align}
Similarly,
\[
\|\sum_g \lge \nabla f_g(x),y\rge \rtimes \la(g)\|_{L_p^r(E_x)} = \|\Ga_{L\rtimes I}(f^*,f^*)^{1/2}\|_{L_{p}(L_\8(\rz^d,\ga_d)\rtimes G)}.
\]
It follows from Theorem \ref{khin}, \eqref{minu} and \eqref{gax} that
\begin{align*}
  &\|f-E_{\lx(G)}f\|_{L_p(L_\8(\rz^d,\ga_d)\rtimes G)}\\
  \le &~ \frac{\pi}2 \|\sum_g \lge \nabla f_g(x),y\rge \rtimes \la(g)\|_{L_p(L_\8(\rz^d\times \rz^d, \ga_d\times \ga_d)\rtimes G)}\\
\le&~ \frac{C\pi}2\sqrt{p} \max\{\|\Ga_{L\rtimes I}(f,f)^{1/2} \|_{L_p(L_\8(\rz^d,\ga_d)\rtimes G)}, \|\Ga_{L\rtimes I}(f^*,f^*)^{1/2}\|_{L_p(L_\8(\rz^d,\ga_d)\rtimes G)} \}.
\end{align*}
This completes the proof.
\end{proof}
\begin{rem}
In an earlier version of this paper, Theorem \ref{gmpo} was stated in the context that the representation $\al$ is determined by a conditional negative length function on $G$. The current form was suggested by the referee. The orthogonal representation $\al$ is crucial in our argument. It seems it is not enough to only assume \eqref{eqva} for a general measure preserving action $\al$, because such an action may destroy the Gaussian structure and the differentiability of functions.
\end{rem}

\subsection{Group von Neumann algebras}
Recall that $S_t$ is the semigroup acting on $\lx(G)$ given by $S_t\la(g)=e^{-t\psi(g)}\la(g)$. Define
\[
\pi: \lx(G)\to L_\8(\rz^d,\ga_d)\rtimes G,\quad
\pi(\la(g))=e^{i\lge b_\psi(g),\cdot\rge }\rtimes \la(g),
 \]
where $b_\psi$ is the the 1-cocycle determined by $\psi$. Then $\pi$ is a trace preserving $*$-homomorphism.
\begin{cor}\label{gppo}
  Let $G$ be a discrete group with cn-length function $\psi$. Let $2\le p <\8$. Then for $f\in\lx(G)$,
  \[
  \|\pi(f)-S_{1/2}(f)\|_{L_p(L_\8(\rz^d,\ga_d)\rtimes G)}\le C\sqrt{p} \max\{ \|\Ga_\psi(f,f)^{1/2}\|_{L_p(\lx(G))},\|\Ga_\psi(f^*,f^*)^{1/2}\|_{L_p(\lx(G))}\}.
  \]
  \end{cor}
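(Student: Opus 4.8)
The plan is to reduce the statement to Theorem \ref{gmpo} applied to the single element $\pi(f)\in L_\8(\rz^d,\ga_d)\rtimes G$, supported by two identities: one identifying $S_{1/2}(f)$ with the conditional expectation $E_{\lx(G)}\pi(f)$, and one identifying the gradient form of $\pi(f)$ on the big algebra with the $\pi$-transplant of $\Ga_\psi(f,f)$.

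First I would show $S_{1/2}(f)=E_{\lx(G)}\pi(f)$, and it suffices to check this on generators. Applying $E_{\lx(G)}$ to $\pi(\la(g))=e^{i\lge b_\psi(g),\cdot\rge}\rtimes\la(g)$ amounts to integrating out the Gaussian coefficient, and $\int_{\rz^d}e^{i\lge b_\psi(g),x\rge}\ga_d(dx)=e^{-\|b_\psi(g)\|^2/2}=e^{-\psi(g)/2}$, using $\|b_\psi(g)\|^2=\psi(g)$ from Section \ref{prel}. Since $S_{1/2}\la(g)=e^{-\psi(g)/2}\la(g)$, this gives $E_{\lx(G)}\pi(\la(g))=S_{1/2}\la(g)$, and by linearity and density the identity holds for all $f\in\lx(G)$. (Equivalently one may compute $\lim_{t\to\8}T_t\pi(\la(g))$ from the Mehler formula, since $E_{\lx(G)}=\lim_{t\to\8}T_t$.) Consequently the left-hand side equals $\|\pi(f)-E_{\lx(G)}\pi(f)\|_p$, and I would apply Theorem \ref{gmpo} to $F=\pi(f)$ (bounded, since $\pi$ is a $*$-homomorphism and $f\in\lx(G)$) to obtain
\[
\|\pi(f)-E_{\lx(G)}\pi(f)\|_p\le C\sqrt{p}\,\max\{\|\Ga_{L\rtimes I}(\pi(f),\pi(f))^{1/2}\|_p,\ \|\Ga_{L\rtimes I}(\pi(f)^*,\pi(f)^*)^{1/2}\|_p\}.
\]

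The crucial step is then the identity $\Ga_{L\rtimes I}(\pi(f),\pi(f))=\pi(\Ga_\psi(f,f))$, which I would verify on generators. Writing $w_g=e^{i\lge b_\psi(g),\cdot\rge}$, one has $\nabla w_g=i\,b_\psi(g)\,w_g$, so the diffusion carr\'e du champ of the coefficients $w_g,w_h$ produces the scalar $\lge b_\psi(g),b_\psi(h)\rge=K(g,h)$ times the Gaussian phase $\overline{w_g}w_h=e^{i\lge b_\psi(h)-b_\psi(g),\cdot\rge}$. Inserting this into the crossed-product formula for $\Ga_{L\rtimes I}$ from the proof of Theorem \ref{gmpo} (which applies $\hat\al_{g^{-1}}$ to the gradient inner product before attaching $\la(g^{-1}h)$), the cocycle law together with orthogonality of $\al$ collapses the phase: using $\al_{g^{-1}}(b_\psi(g))=-b_\psi(g^{-1})$ and $\al_{g^{-1}}(b_\psi(h))=b_\psi(g^{-1}h)-b_\psi(g^{-1})$ one gets $\al_{g^{-1}}(b_\psi(h))-\al_{g^{-1}}(b_\psi(g))=b_\psi(g^{-1}h)$, whence $\hat\al_{g^{-1}}(\overline{w_g}w_h)=w_{g^{-1}h}$. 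Since $\Ga_\psi(\la(g),\la(h))=K(g,h)\la(g^{-1}h)$, the resulting expression is exactly $\pi(\Ga_\psi(f,f))$. Because $\pi$ is a trace-preserving $*$-homomorphism, it is isometric on every $L_p$ and commutes with the positive square root, so $\|\Ga_{L\rtimes I}(\pi(f),\pi(f))^{1/2}\|_{L_p(L_\8(\rz^d,\ga_d)\rtimes G)}=\|\Ga_\psi(f,f)^{1/2}\|_{L_p(\lx(G))}$; using $\pi(f)^*=\pi(f^*)$ the same identity for $f^*$ handles the other term, and substituting yields the claim.

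I expect the main obstacle to be the gradient-form identity $\Ga_{L\rtimes I}(\pi(f),\pi(f))=\pi(\Ga_\psi(f,f))$. The subtlety is that $\pi$ does \emph{not} intertwine the generators: applying $L\rtimes I$ to $\pi(\la(g))$ produces, beyond the expected $\psi(g)$, a spurious first-order term $i\lge b_\psi(g),\cdot\rge$. These terms must cancel in the carr\'e du champ (the derivation/diffusion property), and the surviving Gaussian phase must collapse precisely through the cocycle law. An alternative route, bypassing Theorem \ref{gmpo}, is to re-run its proof for this specific element: write $\pi(f)-E_{\lx(G)}\pi(f)$ as a $\ta$-integral of $\frac{\partial}{\partial\ta}T_{-\ln\cos\ta}\pi(f)$ and bound it by the modified Khintchine inequality \eqref{khgp}, recognizing $\de_\psi f$ on the right-hand side. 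I would nonetheless favor the direct reduction to Theorem \ref{gmpo} as the shortest path.
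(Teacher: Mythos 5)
Your proposal is correct, but it takes a different route from the paper. The paper does not invoke Theorem \ref{gmpo} as a black box: it re-runs that theorem's proof with the specific coefficients $\xi_g=f_g e^{i\lge b_\psi(g),\cdot\rge}$, using \eqref{minu} and Lemma \ref{diff} to arrive at the identity $S_{1/2}f-\pi(f)=iE_{L_\8^x(\rz^d,\ga_d)\rtimes G}\bigl[\int_0^{\pi/2}(\rx_\ta\otimes id)(\pi^x\de_\psi^y(f))\,d\ta\bigr]$, hence the intermediate bound $\|\pi(f)-S_{1/2}f\|_p\le\frac{\pi}{2}\|\de_\psi f\|_p$, and then concludes by the Khintchine inequality \eqref{khgp} for the Gaussian derivation. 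You instead apply Theorem \ref{gmpo} directly to $F=\pi(f)$ and reduce everything to the two intertwining identities $E_{\lx(G)}\circ\pi=S_{1/2}$ and $\Ga_{L\rtimes I}(\pi(f),\pi(f))=\pi(\Ga_\psi(f,f))$; your verification of the latter is sound (the drift term in $L$ drops out of the carr\'e du champ, and the cocycle computation $\al_{g^{-1}}(b_\psi(h)-b_\psi(g))=b_\psi(g^{-1}h)$ collapses the phase correctly), and since $\pi$ is a trace-preserving $*$-homomorphism it is isometric on $L_p$ and commutes with the square root, so the right-hand sides match. Your route is shorter and isolates a clean structural fact (the gradient form intertwines with $\pi$) that the paper never states explicitly; the paper's route has the advantage of exposing the derivation $\de_\psi$ and the inequality $\|\pi(f)-S_{1/2}f\|_p\le\frac{\pi}{2}\|\de_\psi f\|_p$, which ties the corollary to Pisier's Riesz-transform formula and to the Khintchine machinery of Section \ref{prel}. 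Your closing remark correctly identifies the paper's actual argument as the alternative you chose not to pursue.
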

\begin{proof} The proof is similar to the proof of Theorem \ref{gmpo}. By approximation, it suffices to consider $f=\sum_{g} f_g\la(g)\in\lx(G)$ for finitely many $g$'s. Note that
\[
E_{\lx(G)}\pi(f)=\sum_g f_g \int e^{i\lge b_\psi(g),x\rge } \ga_d(dx) \la(g)=\sum_g f_g e^{-\psi(g)/2}\la(g) = S_{1/2} f.
\]
By \eqref{minu}, Lemma \ref{fini} and Lemma \ref{diff} with $\xi=f_g\exp(i\lge b_\psi(g),\cdot\rge)$, we have
\begin{align*}
S_{1/2} f-\pi(f)&= i\int_0^{\pi/2} E_{L_\8^x(\rz^d,\ga_d)\rtimes G}\Big[\sum_{g} f_g (\rx_\ta\otimes id_{\ell_2(G)})[( e^{i\lge b_\psi(g),x\rge} \lge b_\psi(g),y\rge)\rtimes \la(g)]\Big] d\ta.
\end{align*}
We extend the trace preserving $*$-homomorphism $\pi$
\begin{equation}
  \label{tph}
\begin{aligned}
  \pi: L_\8(\rz^{d}, \ga_{d})\rtimes G&\to L_\8(\rz^{2d}, \ga_{2d})\rtimes G\\
   f(y)\rtimes \la(g)&\mapsto [\exp(i\lge b_\psi(g),x \rge) f(y)] \rtimes \la(g).
\end{aligned}
\end{equation}
Clearly, $\pi$ extends to linear isometries between $L_p$ spaces so that the function $f$ in \eqref{tph} can be in $L_p(\rz^d,\ga_d)$.
Hence, we find the crucial identity
\[
S_{1/2}f-\pi(f)=iE_{L_\8^x(\rz^d,\ga_d)\rtimes G}\left[\int_0^{\pi/2}(\rx_\ta\otimes id_{\ell_2(G)}) (\pi^x \de_\psi^y(f))d\ta\right],
\]
where the $x,y$ in the superscript are used to specify the variables in order to take conditional expectation. It follows that
\[
\|\pi(f)- S_{1/2}f\|_{L_p(L_\8(\rz^{d},\ga_{d})\rtimes G)}\le \frac{\pi}2 \|\de_\psi f\|_{L_p(L_\8(\rz^{d},\ga_{d})\rtimes G)}.
\]
Then \eqref{khgp} completes the proof.
\end{proof}

\section{Transportation type inequalities and applications}
The readers are referred to \cite{Vil} for general questions on transportation problems. Let $(\Om, d)$ be a metric space. Let $\mu$ and $\nu$ be probability measures on $(\Om, d)$ with finite $p$-th moment. Recall that the $p$-Wasserstein distance between $\mu$ and $\nu$ is defined as
\[
W_p(\mu,\nu)=\inf \Big(\iint d(x,y)^p d\pi(x,y)\Big)^{1/p}
\]
where the infimum is taken over all probability measure $\pi$ on the product space $\Om\times \Om$ which is a coupling of $\mu$ and $\nu$. The $1$-Wasserstein distance has a functional representation as \eqref{w1de}.
There is also a functional representation of $W_2$ (see, e.g., \cite{Ra})
\begin{equation}\label{w2fu}
W_2^2(\mu,\nu)=\sup\int g d\nu-\int f d\mu
\end{equation}
where the sup is taken over all bounded continuous function pairs $(g,f)$ with $g(y)-f(x)\le d(x,y)^2$ for all $x,y\in\Om$.

The starting point of this section is the $L_p$ Poincar\'e type inequalities \eqref{poin}, i.e., for $2\le p<\8$,
\begin{equation}
      \|x-E_{\rm Fix} x\|_p\le C\sqrt{p} \max\{ \|\Ga_A(x,x)^{1/2}\|_p,\|\Ga_A(x^*,x^*)^{1/2}\|_p\}.\tag{\ref{poin}}
  \end{equation}
The known examples satisfying these inequalities include the classical Gaussian spaces as shown in Proposition \ref{clpo}, the Walsh systems and CAR algebras due to Efraim and Lust-Piquard \cite{ELP}, and the group (Gaussian) measure space proved in Theorem \ref{gmpo}. In a forthcoming paper, we will show that the free group von Neumann algebras and more examples satisfy \eqref{poin}. We recall a result from \cite{JZ}*{Corollary 3.19}, which is a noncommutative generalization of Bobkov and G\"otze's result in \cite{BG}. Let $(\nx,\tau)$ be a noncommutative probability space.
\begin{theorem}
Suppose $\tau(e^{t(x-E_{\rm Fix} x)})\le e^{ct^2}$ for any $\tau$-measurable self-adjoint operator $x$ affiliated to $\nx$ such that $\|\Ga_A(x,x)\|_\8\le 1$. Then
\begin{equation}\label{wse2}
 Q_1(\rho, E_{\rm Fix} \rho)\le \sqrt{2c \Ent(\rho)}.
\end{equation}
for all $\rho\ge0$ with $\tau(\rho)=1$.
\end{theorem}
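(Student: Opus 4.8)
The plan is to transcribe the Bobkov--G\"otze duality argument into the operator setting, replacing the Kantorovich--Rubinstein functional by the definition \eqref{q1de} of $Q_1$ and the classical entropy inequality by its noncommutative (Gibbs variational) analogue. First I would unwind $Q_1(\rho,E_{\rm Fix}\rho)$. Since $\tau(\rho)=\tau(E_{\rm Fix}\rho)=1$, the two states in play are $\xi(x)=\tau(x\rho)$ and $\eta(x)=\tau(xE_{\rm Fix}\rho)$, and self-adjointness of $E_{\rm Fix}$ for the trace pairing gives $\tau(xE_{\rm Fix}\rho)=\tau(E_{\rm Fix}(x)\rho)$. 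Hence $\xi(x)-\eta(x)=\tau((x-E_{\rm Fix}x)\rho)$ and
\[
Q_1(\rho,E_{\rm Fix}\rho)=\sup\{|\tau((x-E_{\rm Fix}x)\rho)|:x=x^*,\ \|\Ga_A(x,x)^{1/2}\|_\8\le1\},
\]
reducing the problem to estimating $\tau(h\rho)$ for $h=x-E_{\rm Fix}x$.

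The second ingredient is the noncommutative Gibbs variational principle: for self-adjoint $h$ affiliated to $\nx$ with $\tau(e^h)<\8$ and any density $\rho\ge0$ with $\tau(\rho)=1$,
\[
\tau(h\rho)\le \log\tau(e^h)+\Ent(\rho).
\]
I would derive this from nonnegativity of the relative entropy (Klein's inequality): comparing $\rho$ with the Gibbs state $\si=e^h/\tau(e^h)$, the inequality $D(\rho\|\si)\ge0$ expands precisely to $\tau(\rho\log\rho)-\tau(h\rho)+\log\tau(e^h)\ge0$, and $\Ent(\rho)=\tau(\rho\log\rho)$ because $\tau(\rho)=1$.

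Applying this with $h=t(x-E_{\rm Fix}x)$ for $t>0$, and invoking the hypothesis---which controls exactly $\tau(e^{t(x-E_{\rm Fix}x)})$ under $\|\Ga_A(x,x)\|_\8\le1$---to bound $\log\tau(e^{th})$ by $ct^2$, I obtain $t\,\tau(h\rho)\le ct^2+\Ent(\rho)$, that is $\tau(h\rho)\le ct+\Ent(\rho)/t$. Replacing $x$ by $-x$, which satisfies the same gradient constraint, removes the absolute value; minimizing over $t>0$ and then taking the supremum over admissible $x$ yields \eqref{wse2}.

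The main obstacle is rigor in the unbounded regime: $h=x-E_{\rm Fix}x$ is only $\tau$-measurable, so Klein's inequality and the identity $\tau(\rho\log\si)=\tau(h\rho)-\log\tau(e^h)$ must be justified by spectral truncation and a limiting argument, while finiteness of the pairing $\tau(h\rho)$ rests on the Orlicz duality between $h\in L_\phi$ (from the exponential integrability) and $\rho\in L\log L$ (guaranteed by $\Ent(\rho)<\8$; otherwise the bound is vacuous). One must also check that the posited exponential bound for the test operators $x$ is exactly what is needed for $h$. The reduction via $E_{\rm Fix}$ and the scalar optimization in $t$ are then routine.
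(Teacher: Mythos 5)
Your argument is essentially the one the paper relies on: the theorem is quoted from \cite{JZ}*{Corollary 3.19}, and the paper's own generalization (Lemma \ref{ent1}) runs exactly your route --- the entropy--exponential duality $\tau(h\rho)\le\log\tau(e^{h})+\Ent(\rho)$ (cited there as \cite{JZ}*{Lemma 3.17}, which you rederive via Klein's inequality), the identity $\tau(xE_{\Fix}\rho)=\tau(E_{\Fix}(x)\rho)$, and optimization over $t$. One bookkeeping point: minimizing $ct+\Ent(\rho)/t$ yields $2\sqrt{c\,\Ent(\rho)}=\sqrt{4c\,\Ent(\rho)}$, which matches the stated $\sqrt{2c\,\Ent(\rho)}$ only under the Bobkov--G\"otze normalization $\tau(e^{t(x-E_{\Fix}x)})\le e^{ct^{2}/2}$; with the hypothesis as written you lose a factor of $\sqrt{2}$, a discrepancy that appears to originate in the transcription of the statement rather than in your proof.
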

The definition of $Q_1$ was given \eqref{q1de}. As explained there, in the commutative setting with Laplacian as the generator, $Q_1 = W_1$. With the help of Poincar\'e type inequalities \eqref{poin}, one can easily show that for $t\in\rz$ and all self-adjoint element $x\in \nx$,
\begin{equation}\label{expx}
  \tau(e^{t(x-E_{\Fix}x)})\le e^{c\|\Ga_A(x,x)\|_\8 t^2}
\end{equation}
for some constant $c>0$. In fact, we only need the $L_\8$ norm instead of the $L_p$ norm on the right-hand side of \eqref{poin} to deduce \eqref{expx}; see \cite{JZ}*{Proposition 3.14}. This means that the transportation type inequality \eqref{wse2} holds for all the examples mentioned above which satisfy the $L_p$ Poincar\'e inequalities \eqref{poin}.

Assuming logarithmic Sobolev inequality, Bobkov and G\"otze proved an exponential integrability result, which reads in our context as
\begin{equation}\label{exp2}
  \tau(e^{x-E_{\Fix} x})\le \tau(e^{c\Ga_A(x,x)}).\tag{\ref{exp0}}
\end{equation}
Note that \eqref{exp2} is stronger than \eqref{expx}. Our next result says that \eqref{exp2} can be derived from \eqref{poin}.
\begin{theorem}
  Suppose the $L_p$ Poincar\'e type inequalities \eqref{poin} hold for all $p\ge 2$ and all self-adjoint $x\in\nx$. Then there exists $c>0$ such that \eqref{exp2} holds for all self-adjoint $x\in \nx$.
\end{theorem}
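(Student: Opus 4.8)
The plan is to expand the exponential as a power series, to control each moment of $y:=x-E_{\Fix}x$ by the Poincar\'e inequality \eqref{poin}, and then to match the resulting series term by term against the power series of $\tau(e^{cW})$, where $W:=\Ga_A(x,x)\ge 0$. We fix $c$ to be a sufficiently large absolute multiple of the square of the constant $C$ in \eqref{poin}; how large will be dictated below. If $\tau(e^{cW})=\8$ there is nothing to prove, so assume it is finite; then $\tau(W^s)<\8$ for every $s>0$. Since $y$ is bounded and self-adjoint, $e^y=\sum_{k\ge 0}y^k/k!$ converges in operator norm and, $\tau$ being continuous, $\tau(e^y)=\sum_{k\ge 0}\tau(y^k)/k!$. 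The linear term vanishes, for $\tau(y)=\tau(x)-\tau(E_{\Fix}x)=0$, and for every $k$ one has $|\tau(y^k)|\le \tau(|y|^k)=\|y\|_k^k$ by self-adjointness of $y$. Applying \eqref{poin} to $x$ with $p=k$ (the two maxands agreeing since $x=x^*$) bounds $\|y\|_k=\|x-E_{\Fix}x\|_k$ by $C\sqrt{k}\,\|W^{1/2}\|_k=C\sqrt{k}\,\tau(W^{k/2})^{1/k}$, whence $\|y\|_k^k\le (C\sqrt{k})^k\,\tau(W^{k/2})$.

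Consequently $\tau(e^y)\le 1+\sum_{k\ge 2}\frac{(C\sqrt{k})^k}{k!}\tau(W^{k/2})$, and by Stirling's bound $k!\ge (k/e)^k$ the $k$-th coefficient is at most $(Ce/\sqrt{k})^k$. I would then separate even and odd indices. For $k=2m$ the contribution is $(C^2e^2/2m)^m\tau(W^m)$, and a comparison of coefficients against the target term $\frac{c^m}{m!}\tau(W^m)$ via Stirling shows that, once $c$ exceeds a fixed multiple of $C^2$, each even contribution is at most $\tfrac12\frac{c^m}{m!}\tau(W^m)$. For $k=2m+1$ the half-integer power is handled by the Cauchy--Schwarz inequality for the trace, $\tau(W^{m+1/2})=\tau(W^{m/2}W^{(m+1)/2})\le \tau(W^m)^{1/2}\tau(W^{m+1})^{1/2}$; a weighted AM--GM then splits the odd contribution between the two neighbouring target terms $\frac{c^m}{m!}\tau(W^m)$ and $\frac{c^{m+1}}{(m+1)!}\tau(W^{m+1})$, with coefficients that (again by Stirling) are summable and can be made uniformly small by enlarging $c$.

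Collecting the estimates, the total multiple of each $\frac{c^m}{m!}\tau(W^m)$ produced by the even and odd contributions is at most $1$ for $c$ large enough, so $\tau(e^y)\le 1+\sum_{m\ge 1}\frac{c^m}{m!}\tau(W^m)=\tau(e^{cW})$, which is \eqref{exp2}. The only genuine difficulty is this last bookkeeping step: matching the half-integer moments $\tau(W^{m+1/2})$ coming from the odd powers against integer moments \emph{without} generating additive constants (this is why one uses the homogeneous Cauchy--Schwarz split rather than a crude $t^\theta\le 1+t$ bound, which would spoil the small-$W$ regime) and verifying that the assembled coefficients stay below $1$. Everything else reduces to Stirling's formula together with the elementary inequality $|\tau(y^k)|\le \|y\|_k^k$.
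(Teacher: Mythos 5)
Your proposal is correct and follows essentially the same route as the paper: expand $\tau(e^{x-E_{\Fix}x})$ in a Taylor series, bound $|\tau(y^k)|\le\|y\|_k^k$ by \eqref{poin} with $p=k$, split even and odd indices, interpolate the half-integer moments $\tau(W^{m+1/2})$ between the two neighbouring integer moments, and close with a Stirling-type comparison of coefficients against $\tau(e^{cW})$. The only cosmetic difference is that you use Cauchy--Schwarz plus a weighted AM--GM for the half-integer step where the paper uses the three-line H\"older interpolation $\|W\|_{j+1/2}\le\|W\|_j^{1-\theta}\|W\|_{j+1}^{\theta}$ followed by Young's inequality; these are equivalent.
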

\begin{proof}
  Since $E_{\Fix}x$ is in the multiplicative domain of $T_t$, we have $\Ga_A(x,x)=\Ga_A(x-E_{\Fix}x, x-E_{\Fix}x)$. Without loss of generality, we may assume $E_{\Fix}x=0$ and thus $\tau(x)=0$. By the Taylor series, we have
  \begin{align*}
  &\tau(e^{x})= 1+\sum_{k=2}^\8 \frac{\tau(x^k)}{k!} \le 1+\sum_{k=2}^\8 \frac{C^k k^{k/2}\tau(\Ga_A(x,x)^{k/2})}{k!}\\
  &=1+\sum_{j=1}^{\8} \frac{C^{2j} (2j)^{j}\tau(\Ga_A(x,x)^{j})}{(2j)!}+\sum_{j=1}^{\8} \frac{C^{2j+1} (2j+1)^{j+1/2}\tau(\Ga_A(x,x)^{j+1/2})}{(2j+1)!}.
  \end{align*}
  Choose $\ta\in(0,1)$ so that $\frac1{j+1/2}=\frac{1-\ta}{j}+\frac{\ta}{j+1}$. By the noncommutative H\"older inequality,
  \[
  \|\Ga_A(x,x)\|_{j+1/2}\le \|\Ga_A(x,x)\|_{j}^{1-\ta}\|\Ga_A(x,x)\|_{j+1}^\ta.
  \]
  By Young's inequality, we have
  \begin{align*}
      \tau(\Ga_A(x,x)^{j+1/2})&\le \frac{(2j+1)(1-\ta)}{2j}\tau(\Ga_A(x,x)^{j})+ \frac{(2j+1)\ta}{2j+2}\tau(\Ga_A(x,x)^{j+1})\\
      &\le \max\{\tau(\Ga_A(x,x)^{j}),~ \tau(\Ga_A(x,x)^{j+1})\}.
  \end{align*}
  Note that for $j\ge 1$,
  \[
  \frac{(2j+1)^{j+1/2}}{(2j+1)!}\le \min\Big\{\frac{(2j)^j}{(2j)!}, \sqrt{2j+2}\frac{(2j+2)^{j+1}}{(2j+2)!}\Big\}.
  \]
  Since $\sqrt{2j+2}$ is bounded by $C^{2j+2}$ for some $C>1$, we have
  \begin{align*}
  \tau(e^x)&\le 1+2\sum_{j=1}^{\8} \frac{C'^{2j} (2j)^{j}\tau(\Ga_A(x,x)^{j})}{(2j)!}+\sum_{j=2}^{\8} \frac{C'^{2j} (2j)^{j}\tau(\Ga_A(x,x)^{j})}{(2j)!}\\
  &\le 1+3\sum_{j=1}^{\8} \frac{C'^{2j} (2j)^{j}\tau(\Ga_A(x,x)^{j})}{(2j)!}.
  \end{align*}
  Notice the elementary inequality $\frac{(2j)^j}{(2j)!}=\frac{j^j}{j!(2j-1)!!}\le (\frac{e}2)^j\frac1{j!}$ for $j\in \nz$. We have
  \[
  \tau(e^x)\le 1+ \sum_{j=1}^{\8} \frac{C^{2j} (e/2)^j\tau(\Ga_A(x,x)^{j})}{j!}=\tau(e^{c\Ga_A(x,x)}).\qedhere
  \]
\end{proof}

Now that the better bounds in \eqref{poin} (compared with (3.2) in \cite{JZ}) result in the stronger exponential inequality \eqref{exp2} (compared with \eqref{expx}), we expect to have stronger transportation type inequality as well. On the other hand, Bobkov and G\"otze actually showed a sharper inequality based on logarithmic Sobolev inequality in the classical setting in \cite{BG}
\begin{equation}\label{shap}
Q(\mu,\nu):=\sup_{\|f\|_{\rm Lip}\le 1}\|f\|_{L^2(d\nu)}-\|f\|_{L^2(d\mu)}\le \sqrt{2c D(\nu||\mu)}.
\end{equation}
It was observed in the same paper that $W_1(\mu,\nu)\le Q(\mu,\nu)\le W_2(\mu,\nu)$. Our goal here is to give a stronger version (compared with \eqref{wse2}) of the transportation type inequality in the spirit of \eqref{shap}. Since we have all the $L_p$ norm of $\Ga_A(x,x)^{1/2}$ in the Poincar\'e type inequalities, it is natural to consider the situation where we do not have $\|\Ga_A(x,x)^{1/2}\|_\8\le 1$ but some mild control on $\|\Ga_A(x,x)^{1/2}\|_p$. Our approach is related to Milman's generalization \cite{Mi} of transportation cost inequalities in the commutative setting.

Given a Young function $\phi$, the complementary function $\phi^*$ is given by the Legendre transform $\phi^*(s)=\sup_{t\ge0}\{ ts-\phi(t)\}$ for $s\ge 0$. The following lemma follows easily from Young's inequality. We include a proof for completeness following \cites{BG, Mi}. Recall that $\Ent(y)=\tau(y\ln(y/\ln(y)))$ for a positive $\tau$-measurable operator $y$. We will need the exponential integrability
\begin{equation}\label{exp}
\tau(e^{t(x-E_{\Fix}x)})\le e^{\varphi(t)}.
\end{equation}

\begin{lemma}\label{ent1}
  Let $\varphi:[0,\8)\to[0,\8]$ be a strictly increasing Young function. Assume \eqref{exp} holds for all $t\ge0$ and all self-adjoint $\tau$-measurable operator $x$. Then for all positive operator $\rho\in \nx$ with $\tau(\rho)=1$, we have for any self-adjoint $x$,
  \[
  \tau(x\rho-xE_{\rm Fix}\rho)\le (\varphi^*)^{-1}(\Ent(\rho)).
  \]
  Here $h^{-1}:[0,\8)\to[0,\8]$ is the inverse of the function $h$ given by $$h^{-1}(t)=\sup\{s: h(s)\le t\}.$$
\end{lemma}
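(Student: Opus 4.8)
The plan is to collapse $\tau(x\rho)-\tau(xE_{\Fix}\rho)$ into a single linear functional of $\rho$, apply the entropy--exponential duality, and then optimize the free parameter $t$ against the Legendre transform $\varphi^*$. First I would use that the trace-preserving conditional expectation $E_{\Fix}$ is self-adjoint with respect to $\tau$, i.e. $\tau(a E_{\Fix}(b))=\tau(E_{\Fix}(a) b)$ for all $a,b$ (both sides equal $\tau(E_{\Fix}(a)E_{\Fix}(b))$ by the module property). Taking $a=x$, $b=\rho$ gives $\tau(x E_{\Fix}\rho)=\tau(E_{\Fix}(x)\rho)$, so that
\[
\tau(x\rho)-\tau(x E_{\Fix}\rho)=\tau\big((x-E_{\Fix}x)\rho\big).
\]
Writing $y=x-E_{\Fix}x$, this is self-adjoint with $E_{\Fix}y=0$, and since $\rho\ge0$ the number $s:=\tau(y\rho)$ is real. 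The hypothesis \eqref{exp} applies to $y$ directly, because $y-E_{\Fix}y=y$, whence $\tau(e^{ty})\le e^{\varphi(t)}$ for all $t\ge0$.

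The key analytic ingredient is the noncommutative Gibbs variational inequality: for any positive $\rho$ with $\tau(\rho)=1$ and any self-adjoint $z$,
\[
\tau(z\rho)\le \ln\tau(e^{z})+\Ent(\rho).
\]
I would derive this from the nonnegativity of the quantum relative entropy $D(\rho\|\sigma)=\tau(\rho\ln\rho-\rho\ln\sigma)\ge0$, evaluated at the Gibbs state $\sigma=e^{z}/\tau(e^{z})$. Since $\ln\sigma=z-\ln\tau(e^{z})$ is merely a scalar shift of $z$, one has $\tau(\rho\ln\sigma)=\tau(z\rho)-\ln\tau(e^{z})$, and rearranging $D(\rho\|\sigma)\ge0$ yields the claim (recall $\Ent(\rho)=\tau(\rho\ln\rho)$ here since $\tau(\rho)=1$).

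Applying this to $z=ty$ for $t\ge0$ and combining with the exponential bound gives
\[
t\,s=\tau(ty\,\rho)\le \ln\tau(e^{ty})+\Ent(\rho)\le \varphi(t)+\Ent(\rho),
\]
so that $ts-\varphi(t)\le \Ent(\rho)$ for every $t\ge0$. Taking the supremum over $t\ge0$ on the left gives precisely $\varphi^*(s)\le \Ent(\rho)$ by the definition of the Legendre transform. By the definition of the generalized inverse $h^{-1}(t)=\sup\{u:h(u)\le t\}$, the value $s$ then lies in $\{u:\varphi^*(u)\le\Ent(\rho)\}$, whence $s\le(\varphi^*)^{-1}(\Ent(\rho))$, which is the assertion. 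The main obstacle is the Gibbs variational inequality in the genuinely noncommutative, $\tau$-measurable setting: unlike the commutative case, $y$ and $\rho$ need not commute, so one must invoke the operator (rather than scalar) version of relative-entropy positivity and check the requisite integrability of $e^{ty}$ and $\rho\ln\rho$; once this duality is in place, the remaining optimization is routine.
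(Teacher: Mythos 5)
Your proof is correct and follows essentially the same route as the paper: the paper also reduces to $\tau((x-E_{\Fix}x)\rho)\le(\varphi(t)+\Ent(\rho))/t$ via the same self-adjointness of $E_{\Fix}$ and the entropy--exponential duality (which it cites as Lemma 3.17 of \cite{JZ} rather than rederiving from relative-entropy positivity as you do), and then optimizes over $t$. Your phrasing of the final step --- $\varphi^*(s)\le\Ent(\rho)$ hence $s\le(\varphi^*)^{-1}(\Ent(\rho))$ --- is just the equivalent dual form of the paper's $\inf_{t>0}(\varphi(t)+\Ent(\rho))/t$.
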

\begin{proof}
  It follows from \eqref{exp} that $\tau(e^{t(x-E_{\rm Fix}x)-\varphi(t)})\le 1$. By \cite{JZ}*{Lemma 3.17}, we find
  $$\tau((x-E_{\rm Fix}x)\rho)\le \frac{\varphi(t)+\Ent(\rho)}t.$$
  Note that $\tau((E_{\rm Fix}x)\rho)=\tau(xE_{\rm Fix}\rho)$. Since $\varphi^*$ is continuous, taking inf over $t$ on the right-hand side gives the assertion.
\end{proof}

From now on, let us fix the Young function $\phi(t)=e^{t^2}-1$. This function is usually called $\psi_2$ in literature. It is well known that $\|f\|_\phi\le C_1$ is equivalent to (see, e.g., \cite{Ver})
 \begin{equation}\label{pbd}
 \|f\|_p\le C_2\sqrt{p} \mbox{ for $p\ge 1$}.
 \end{equation}
Given two positive $\tau$-measurable operators $\rho$ and $\si$ with $\tau(\rho)=\tau(\si)=1$, recall from Section 1 that
 \[
 Q_{\phi}(\rho,\si)=\sup \{|\tau(x\rho-x\si)|: x \mbox{ self-adjoint}, \|\Ga_A(x,x)^{1/2}\|_{\phi}\le 1\}.
 \]
 The definition of $Q_\phi$ is motivated by the functional representations of Wasserstein distance \eqref{w1de} and \eqref{w2fu}. Notice that $Q_{\phi}$ is much bigger than $Q_1$ in general. One may compare the following result with various transportation cost inequalities obtained in \cite{BGL}*{Section 5}.
\begin{theorem}\label{tran}
  Suppose the exponential inequality \eqref{exp2} holds for all self-adjoint $x$ with $E_{\Fix} x=0$ and $\|\Ga_A(x,x)^{1/2}\|_\phi\le 1$ in a noncommutative probability space $(\nx,\tau)$. Then there exist absolute positive constants $H_0,C_1,C_2,C_3$ such that for any positive $\tau$-measurable operator $\rho$ affiliated to $\nx$ with $\tau(\rho)=1$, we have
  \begin{equation}\label{teqn}
  Q_{\phi}(\rho, E_{\rm Fix}\rho)\le \begin{cases}
     C_1\sqrt{\Ent(\rho)}, \mbox{ if } \Ent(\rho) <H_0,\\
     C_2\Ent(\rho)+C_3, \mbox{ if } \Ent(\rho) \ge H_0.
    \end{cases}
  \end{equation}
  In particular, there exists $C'>0$ such that
  \[
  Q_{\phi}(\rho, E_{\rm Fix}\rho)\le C'\max\{\sqrt{\Ent(\rho)}, \Ent(\rho)\}.
  \]
\end{theorem}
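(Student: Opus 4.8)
The plan is to extract from \eqref{exp2} a \emph{subgaussian} moment bound for the exponential parameter on a bounded interval, and then to dualize against the entropy exactly as in Lemma \ref{ent1}, optimizing the free parameter over that interval; the truncation of the interval is precisely what produces the two regimes. First I would reduce the quantity to be estimated. Since $E_{\Fix}$ is a trace-preserving conditional expectation, $\tau(xE_{\Fix}\rho)=\tau(E_{\Fix}(x)\rho)$, so $\tau(x\rho)-\tau(xE_{\Fix}\rho)=\tau((x-E_{\Fix}x)\rho)$; moreover $\Ga_A(x-E_{\Fix}x,x-E_{\Fix}x)=\Ga_A(x,x)$ because $E_{\Fix}x$ lies in the multiplicative domain of the semigroup. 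Replacing $x$ by $x-E_{\Fix}x$, and if necessary by $-x$, I may assume $E_{\Fix}x=0$ and write $Q_\phi(\rho,E_{\Fix}\rho)=\sup\{\tau(x\rho): x=x^*,\ E_{\Fix}x=0,\ \|\Ga_A(x,x)^{1/2}\|_\phi\le 1\}$. Finally I record that, by the definition of $\|\cdot\|_\phi$ with $\phi(t)=e^{t^2}-1$, the constraint $\|\Ga_A(x,x)^{1/2}\|_\phi\le 1$ is exactly $\tau(e^{\Ga_A(x,x)}-1)\le 1$.

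Next I would produce a subgaussian bound valid on a bounded range of the parameter. Fix such an $x$ and set $Y=\Ga_A(x,x)\ge0$, so $\tau(e^Y-1)\le 1$. For $0\le s\le 1$ the element $sx$ is self-adjoint with $E_{\Fix}(sx)=0$ and $\Ga_A(sx,sx)=s^2Y$, whence $\|\Ga_A(sx,sx)^{1/2}\|_\phi=s\le 1$; thus \eqref{exp2} applies and gives $\tau(e^{sx})\le\tau(e^{cs^2Y})$. With $s_0=\min\{1,c^{-1/2}\}$, for $s\le s_0$ one has $cs^2\le 1$, and using \eqref{sing} together with the scalar inequality $e^{\lambda u}-1\le\lambda(e^u-1)$ (valid for $0\le\lambda\le 1$, $u\ge0$) with $\lambda=cs^2$ yields $\tau(e^{cs^2Y}-1)\le cs^2\,\tau(e^Y-1)\le cs^2$. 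Hence
\[
\tau(e^{sx})\le 1+cs^2\le e^{cs^2},\qquad 0\le s\le s_0,
\]
which is the exponential integrability \eqref{exp} with $\varphi(s)=cs^2$ on $[0,s_0]$, uniformly over all admissible $x$.

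Then I would dualize and optimize. As in the proof of Lemma \ref{ent1}, the bound $\tau(e^{t(x-E_{\Fix}x)-ct^2})\le 1$ combined with \cite{JZ}*{Lemma 3.17} gives, for every $t\in(0,s_0]$,
\[
\tau((x-E_{\Fix}x)\rho)\le\frac{ct^2+\Ent(\rho)}{t}=ct+\frac{\Ent(\rho)}{t}.
\]
Minimizing the right-hand side over $t\in(0,s_0]$ yields the dichotomy: the unconstrained minimizer is $t^*=\sqrt{\Ent(\rho)/c}$, which lies in $(0,s_0]$ exactly when $\Ent(\rho)\le cs_0^2=:H_0$, giving the value $2\sqrt{c\,\Ent(\rho)}$; when $\Ent(\rho)\ge H_0$ the function $t\mapsto ct+\Ent(\rho)/t$ is decreasing on $(0,s_0]$, so its minimum is attained at $t=s_0$ with value $s_0^{-1}\Ent(\rho)+cs_0$. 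As these bounds are uniform in $x$, taking the supremum gives \eqref{teqn} with $C_1=2\sqrt c$, $C_2=s_0^{-1}$, $C_3=cs_0$, and the final ``in particular'' follows by absorbing $C_3\le(C_3/H_0)\Ent(\rho)$ in the regime $\Ent(\rho)\ge H_0$.

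The main obstacle is the restriction $\|\Ga_A(x,x)^{1/2}\|_\phi\le 1$ in \eqref{exp2}, which forbids applying it to $sx$ once $s>1$ and, more seriously, makes $\tau(e^{tx})$ impossible to bound uniformly for $t\ge s_0$: the admissible $x$ carry only subexponential tails beyond a fixed scale, so $\sup_x\tau(e^{tx})=\infty$ there. The correct move is therefore \emph{not} to seek a global Young function $\varphi$, but to let the parameter $t$ range only over $[0,s_0]$ and optimize on that interval. The phase transition at $H_0=cs_0^2$ is exactly the entropy level at which the unconstrained optimizer $t^*$ exits $(0,s_0]$, and recognizing this is what splits the estimate into the $\sqrt{\Ent(\rho)}$ and the linear $\Ent(\rho)$ regimes.
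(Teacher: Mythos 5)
Your proof is correct, and it shares the paper's overall skeleton --- obtain a subgaussian bound $\tau(e^{sx})\le e^{\varphi(s)}$ valid only for $s$ in a bounded interval, dualize against the entropy via \cite{JZ}*{Lemma 3.17} as in Lemma \ref{ent1}, and let the endpoint of the interval produce the two regimes --- but the technical route to the subgaussian bound is genuinely different. The paper expands $\tau(e^{ct^2\Ga_A(x,x)})$ as a power series, controls the $k$-th moment of $\Ga_A(x,x)$ by $(C_2\sqrt{2k})^{2k}$ using the equivalence \eqref{pbd} between the $\phi$-norm and $\sqrt{p}$-growth of $L_p$-norms, observes that the resulting series converges and is $O(t^2)$ on $[0,(1-\eps)/\sqrt{Ce}]$, and then computes the Legendre transform of the truncated quadratic explicitly. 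You instead use the exact characterization $\tau(e^{\Ga_A(x,x)}-1)\le 1$ of the unit ball of $\|\cdot\|_\phi$ together with the elementary scalar inequality $e^{\lambda u}-1\le \lambda(e^u-1)$ for $0\le\lambda\le1$, transferred to the operator $Y=\Ga_A(x,x)$ by functional calculus; this gives $\tau(e^{cs^2 Y})\le 1+cs^2$ on $[0,s_0]$ in one line, with explicit constants $C_1=2\sqrt{c}$, $C_2=s_0^{-1}$, $C_3=cs_0$, and with no $\eps$ or series manipulation. Your version is also slightly more careful on one point: the hypothesis grants \eqref{exp2} only for $x$ with $\|\Ga_A(x,x)^{1/2}\|_\phi\le1$, and by capping $s_0\le1$ you apply it only to $sx$ with $\|\Ga_A(sx,sx)^{1/2}\|_\phi=s\|\Ga_A(x,x)^{1/2}\|_\phi\le1$, whereas the paper's rewriting ``for all $t\in\rz$'' implicitly leans on the preceding theorem, which establishes \eqref{exp2} for all self-adjoint elements. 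What the paper's moment-based argument buys in exchange is robustness: it runs verbatim for any Young function whose unit ball is characterized by a prescribed growth of $L_p$-norms, while your argument exploits the specific form $\phi(t)=e^{t^2}-1$.
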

\begin{proof}
  Assume $x$ is self-adjoint and $E_{\Fix}x=0$. We can rewrite \eqref{exp2} as $\tau(e^{tx})\le \tau(e^{ct^2\Ga_A(x,x)})$ for all $t\in \rz$. Using \eqref{pbd} and under the assumption $\|\Ga_A(x,x)^{1/2}\|_\phi\le 1$, we have
 \[
  \tau(e^{ct^2\Ga_A(x,x)})= \sum_{k=0}^\8 \frac{(ct^2)^k \tau(\Ga_A(x,x)^k)}{k!}\le 1+\sum_{k=1}^\8 \frac{(ct^2)^k (C_2\sqrt{2k})^{2k}}{k!}=:1+g(t).
 \]
 Let $C=2cC_2^2$. Then the series on the right-hand converges if $t<\frac1{\sqrt{Ce}}$. To get an explicit bound, note that for fixed $0<\eps<1$, there exists $C_\eps>0$ such that the series $g(t)$ converges uniformly and is bounded by $C_\eps t^2$ for $t\in[0,\frac{1-\eps}{\sqrt{Ce}}]$. We define
  \[
  f(t)= \begin{cases}
    C_\eps t^2, \mbox{ if } 0\le t\le \frac{1-\eps}{\sqrt{Ce}},\\
    \8, \mbox{ if } t> \frac{1-\eps}{\sqrt{Ce}}.
  \end{cases}
  \]
  Clearly $f(t)$ is a Young function and $g(t)\le f(t)$. The Legendre transform of $f$ is given by
  \[
  f^*(s) = \begin{cases}
    \frac{s^2}{4C_\eps},\quad 0\le s\le \frac{2C_\eps(1-\eps)}{\sqrt{Ce}},\\
    \frac{1-\eps}{\sqrt{Ce}}s-\frac{C_\eps(1-\eps)^2}{Ce}, \quad s> \frac{2C_\eps(1-\eps)}{\sqrt{Ce}}.
  \end{cases}
  \]
  We find the inverse function
  \[
  (f^*)^{-1}(z) = \begin{cases}
    2\sqrt{C_\eps z},\quad 0\le z < \frac{C_\eps (1-\eps)^2}{Ce},\\
    \frac{\sqrt{Ce}}{1-\eps}z+\frac{C_\eps (1-\eps)}{\sqrt{Ce}},\quad z\ge \frac{C_\eps (1-\eps)^2}{\sqrt{Ce}}.
    \end{cases}
  \]
  Since $\Ga_A(x,x)=\Ga_A(x-E_{\Fix}x, x-E_{\Fix}x)$, it suffices to take sup over all self-adjoint $x$ with $E_{\Fix}x=0$ in the definition of $Q_{\phi}$. The proof is complete by Lemma \ref{ent1}.
\end{proof}

\begin{exam}
  Consider the Gaussian space $(\rz,\ga)$ where $\ga$ is the standard Gaussian measure. Let $\mu$ be a probability measure absolutely continuous with respect to $\ga$.  In order to compute $Q_1(\ga,\mu)$, one takes supremum over essentially linear functions. On the other hand, one needs to take supremum over quadratic functions to compute $Q_{\phi}(\ga,\mu)$.

  Given $a\in\rz$, let us consider $\mu(B)=\ga(a+B)$ for any Borel set $B\subset \rz$ as suggested in \cite{Ta}. Let $f(x)=d\mu/d\ga=e^{ax-a^2/2}$. Then $\Ent(f)=\int f\ln f d\ga=a^2/2$. Let $g(x)=\frac{x^2-1}2$. Then $\int gd\ga=0$, $\|g'\|_\phi=2\sqrt{2}/\sqrt{3}$ and
  \[
  Q_\phi(\ga, \mu)=\sup_{\|g'\|_\phi\le 1} \int gf d\ga-\int g d\ga\ge \frac{\sqrt{3}a^2}{4\sqrt{2}}.
  \]
  This example shows that the estimate in \eqref{teqn} is sharp up to a constant, at least for large entropy.
\end{exam}

In fact, we have the following immediate application in the Gaussian setting. Let $H$ be a real separable Hilbert space. There exists a centered Gaussian family $W=\{W(h): h\in H\}$ defined on a probability space $(\Om,\fx,\pz)$ with $\ez(W(h)W(k))=\lge h, k\rge$. Let $\hx_n$ denote the Wiener chaos of order $n$, spanned by $\{H_n(W(h)): h\in H,\|h\|=1\}$ in $L_2(\Om,\fx,\pz)$, where $H_n(x)=\frac{(-1)^n e^{x^2/2}}{\sqrt{n!}}\frac{d^n}{dx^n}e^{-x^2/2}$ is the Hermite polynomial of order $n$. In particular, $H_1(x)=x$, $H_2(x)=\frac1{\sqrt{2}}(x^2-1)$. Let $\gx$ be the $\si$-algebra generated by $\{W(h):h\in H\}$. It is well known that $L_2(\Om,\gx,\pz)=\oplus_{n=0}^\8 \hx_n$. See \cite{Nua} for more details.
\begin{cor}\label{cha1}
Let $P_{\hx_n}: L_2(\Om,\gx,\pz)\to \hx_n$ be the orthogonal projection. Then for any positive function $f\in L_2(\Om,\gx,\pz)$ with $\|f\|_1=1$, we have
\[
\|P_{\hx_1}f\|_2\le C\sqrt{\Ent(f)},  \quad \|P_{\hx_2}(f)\|_2\le C'\max\{\sqrt{\Ent(f)},~\Ent(f)\}.
\]
\end{cor}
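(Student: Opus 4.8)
\emph{Setup.} In the Gaussian case the relevant noncommutative probability space is the commutative one $\nx=L_\8(\Om,\gx,\pz)$ with $\tau=\ez$, the semigroup is the Ornstein--Uhlenbeck semigroup with generator $L$, the fixed point algebra is the constants (so $E_{\Fix}\rho=\ez\rho$), and the gradient form is $\Ga_L(x,x)=|\nabla x|^2$, where $\nabla$ is the Malliavin gradient and $|\nabla x|^2=\lge\nabla x,\nabla x\rge_H$. Since $\eqref{poin}$ holds here by Proposition \ref{clpo}, the exponential inequality $\eqref{exp2}$ holds, and Theorem \ref{tran} applies, giving $Q_\phi(f,E_{\Fix}f)\le C'\max\{\sqrt{\Ent(f)},\Ent(f)\}$ for any positive $f$ with $\ez f=1$; here $E_{\Fix}f=\ez f=\|f\|_1=1$. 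The plan is to bound each $\|P_{\hx_n}f\|_2$ from above by using a well-chosen element of $\hx_n$ as a test function $x$ in the dual formulations $Q_\phi(f,1)$ or in the Bobkov--G\"otze duality of Lemma \ref{ent1}.

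\emph{First chaos.} The point is that linear functions have \emph{exactly} Gaussian exponential integrability, with no truncation, so they produce the clean $\sqrt{\Ent}$ bound. First I would discard the trivial case $P_{\hx_1}f=0$, and otherwise set $h=\sum_i\ez(fW(e_i))e_i/\|P_{\hx_1}f\|_2$, a unit vector in $H$, so that $x:=W(h)=P_{\hx_1}f/\|P_{\hx_1}f\|_2$ and $\ez(fW(h))=\|P_{\hx_1}f\|_2$. Since $\ez(e^{tW(h)})=e^{t^2\|h\|^2/2}=e^{t^2/2}$, the hypothesis $\eqref{exp}$ holds for this $x$ with $\varphi(t)=t^2/2$, whence $\varphi^*(s)=s^2/2$ and $(\varphi^*)^{-1}(z)=\sqrt{2z}$. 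Lemma \ref{ent1} then gives $\|P_{\hx_1}f\|_2=\tau(xf)\le\sqrt{2\Ent(f)}$, which is the asserted bound.

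\emph{Second chaos.} Here I would instead bound $\|P_{\hx_2}f\|_2$ through $Q_\phi$. The main step is the uniform estimate
\[
\sup\{\,\||\nabla g|\|_\phi:\ g\in\hx_2,\ \|g\|_2=1\,\}\le C_0
\]
for an absolute constant $C_0$. To prove it, diagonalize: any unit $g\in\hx_2$ can be written as $g=\sum_k c_k(W(f_k)^2-1)$ with $(f_k)$ orthonormal in $H$ and $\sum_k c_k^2=\tfrac12$, using the spectral decomposition of the symmetric kernel representing $g$. Then $\nabla g=2\sum_k c_k W(f_k)f_k$, so $|\nabla g|^2=4\sum_k c_k^2\,W(f_k)^2$. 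By the triangle inequality in $L_{p/2}$ (valid since $p/2\ge1$) and the fact that each $W(f_k)$ is a standard Gaussian,
\[
\big\||\nabla g|^2\big\|_{p/2}\le 4\sum_k c_k^2\,\|W(f_k)^2\|_{p/2}=2\,\|W^2\|_{p/2}=2\,\|W\|_p^2\le Cp,
\]
so $\||\nabla g|\|_p\le C'\sqrt p$ for all $p$, and $\||\nabla g|\|_\phi\le C_0$ by the equivalence $\eqref{pbd}$. Taking $g=P_{\hx_2}f/\|P_{\hx_2}f\|_2$ and $x=g/C_0$ makes $\|\Ga_L(x,x)^{1/2}\|_\phi\le1$, and since $\tau(g)=0$ and $\tau(gf)=\|P_{\hx_2}f\|_2$ we get $\|P_{\hx_2}f\|_2\le C_0\,Q_\phi(f,1)\le C_0C'\max\{\sqrt{\Ent(f)},\Ent(f)\}$.

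\emph{Main obstacle.} The only nonroutine point is the uniform Orlicz bound on $|\nabla g|$ over the whole unit sphere of $\hx_2$; everything else is duality. I expect the diagonalization plus the Minkowski-type reduction above to handle it cleanly, the key feature being that $|\nabla g|^2$ is a convex combination (coefficients summing to a fixed constant) of i.i.d.\ squared Gaussians, so its $\psi_2$ behaviour is controlled \emph{independently} of $g$. The contrast with the first chaos—where the gradient is constant and the exponential moment is a genuine Gaussian for all $t$, versus the second chaos where the moment generating series only converges below a threshold—is exactly what forces the appearance of the linear term $\Ent(f)$ in the second estimate but not the first.
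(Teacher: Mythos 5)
Your proof is correct, and the overall architecture is the same as the paper's: reduce each chaos projection to a duality pairing $\lge f,h\rge$ over unit vectors $h$ in the chaos, establish a uniform $\psi_2$ bound on $|\nabla h|$, and invoke the transportation-type inequality (Proposition \ref{clpo} plus Theorem \ref{tran}). Where you differ is in the execution of the two key estimates, and both of your variants are legitimate. For the second chaos, the paper expands $h$ in a fixed orthonormal basis, keeps the cross terms $H_1(g_i)H_1(g_j)$, and bounds $\||\nabla h|\|_p$ by Minkowski plus Khintchine--Kahane; you instead diagonalize the symmetric Hilbert--Schmidt kernel so that $|\nabla g|^2=4\sum_k c_k^2 W(f_k)^2$ with $\sum_k c_k^2=\tfrac12$, after which the triangle inequality in $L_{p/2}$ gives $\||\nabla g|\|_p\le C\sqrt p$ with no Khintchine input. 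This is a genuinely cleaner route to the uniform Orlicz bound, at the mild cost of invoking the spectral theorem for the kernel (fine here, since $f$ real and positive makes the kernel real symmetric). For the first chaos, the paper cites the $Q_1$ inequality \eqref{wse2} (applicable because $\Ga(W(h),W(h))=\|h\|^2$ is constant), whereas you feed the exact Gaussian moment generating function $\ez e^{tW(h)}=e^{t^2/2}$ directly into Lemma \ref{ent1}; this is more self-contained and yields the explicit constant $\sqrt2$. One point to make explicit in a final write-up (the paper is equally silent on it): the test functions $h\in\hx_2$ are unbounded, so the application of Theorem \ref{tran}/Lemma \ref{ent1} to them should be justified by a truncation or density argument. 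Your closing remark correctly identifies why the linear term $\Ent(f)$ appears only for the second chaos.
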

\begin{proof}
We first consider the second inequality. Let $(e_i)$ be an orthonormal basis of $H$. Then $$\{H_2(W(e_i))\}_i\cup\{H_1(W(e_i))H_1(W(e_j))\}_{i\neq j}$$
gives an orthonormal basis of $\hx_2$; see \cite{Nua}*{Proposition 1.1.1}. Write $g_i=W(e_i)$.  It suffices to show that
\[
\lge f, h\rge \le C' \max\{\sqrt{\Ent(f)},~\Ent(f)\}
\]
for all $h=\sum_{i=1}^\8 a_{ii} H_2(g_i) +\sum_{1\le i<j} a_{ij} H_1(g_i) H_1(g_j)$ where $\sum_{j\ge i\ge 1} a_{ij}^2=1$. Note that $(g_i)_i$ are independent standard Gaussian random variables. We have $\int h d\pz=0$. Consider $h$ as a function in the Gaussian space $(\rz^\nz,\ga)$. A computation yields
\[
\||\nabla h|\|_{L_p(\ga)}=\|(\sqrt{2}a_{ii}g_i+\sum_{j> i}a_{ij}g_j)_{i=1}^\8\|_{L_p(\ell_2)}.
\]
Then by the Minkowski inequality and the Khintchine inequality (or directly, the Khintchine--Kahane inequality), we have
\begin{align*}
  \| |\nabla h|\|_{L_p(\ga)}&\le \|(\sum_{j=i+1}^\8 a_{ij}g_j+\sqrt{2}a_{ii} g_i)_{i=1}^\8\|_{\ell_2(L_p)}\le  c_1 \sqrt{p}\Big(\sum_{i=1}^\8\sum_{j=i}^\8 |a_{ij}|^2\Big)^{1/2} = c_1\sqrt{p}.
\end{align*}
Hence, $\|\nabla h\|_\phi \le c$ for some numerical constant $c$. It follows that
\begin{align*}
  \|P_{\hx_2}(f)\|_2& \le \sup_{\|\nabla h \|_\phi \le c, \int h d\ga=0 }\lge f, h\rge_{L_2(\Om,\gx,\pz)} \le \sup_{\|\nabla h\|_\phi \le c} \int hf d\ga-\int h d\ga.
\end{align*}
Since the fixed point algebra for the Ornstein--Uhlenbeck semigroup is trivial, Proposition \ref{clpo} and Theorem \ref{tran} yield the second inequality. The first one follows from the same argument with the help of \eqref{wse2}.
\end{proof}
It is well known that the Walsh system (or Rademacher sequence) has similar properties to the Gaussian system. Let $(\eps_i)_{i\ge 1}$ be a Rademacher sequence. It can be realized as the coordinate functions of the discrete cube $\Om=\{-1,1\}^\nz$ with product uniform probability $\pz$. The Walsh system is given by $\{\eps_B=\prod_{j\in B} \eps_j | B\subset \nz, |B|<\8 \}$. It was shown in \cite{ELP} that the $L_p$ Poincar\'e inequalities hold for the Walsh system with the number operator. The same proof as for Corollary \ref{cha1} gives an estimate of Rademacher chaos of order 1 and 2. Let $\kx_2={\rm span}\{\eps_i\eps_j:i\neq j\}$ and $\kx_1={\rm span}\{\eps_i:i\in \nz\}$.
\begin{cor}
Let $P_{\kx_2}: L_2(\Om, \pz)\to \kx_2$ be the orthogonal projection. Then for any positive function $f\in L_2(\Om,\pz)$ with $\|f\|_1=1$,
\[
\|P_{\kx_1}(f)\|_2\le C\sqrt{\Ent(f)}.\quad  \|P_{\kx_2}(f)\|_2\le C'\max\{\sqrt{\Ent(f)},~\Ent(f)\}.
\]
\end{cor}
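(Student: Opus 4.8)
The plan is to transcribe the proof of Corollary~\ref{cha1} to the Rademacher setting, with the Walsh system replacing the Wiener chaos and the discrete gradient replacing $\nabla$. A structural simplification occurs because $\eps_i^2=1$, so there is no analogue of the diagonal terms $H_2(g_i)$; consequently $\{\eps_i\eps_j:i<j\}$ is already an orthonormal basis of $\kx_2$. Accordingly I would first reduce both claimed bounds to estimates on $\lge f,h\rge$ as $h$ ranges over the unit spheres of $\kx_1$ and $\kx_2$, using $\|P_{\kx_n}f\|_2=\sup\{\lge f,h\rge:h\in\kx_n,\ \|h\|_2=1\}$ together with the facts that for the number operator $N$ the fixed point algebra is the constants (so $E_{\Fix}f=\int f\,d\pz=\|f\|_1=1$ and $\int h\,d\pz=0$ for $h\in\kx_1\cup\kx_2$) and that $\Ga_N(h,h)=|\nabla h|^2$.

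The only genuine computation is the bound $\|\Ga_N(h,h)^{1/2}\|_\phi\le c$ for $h=\sum_{i<j}a_{ij}\eps_i\eps_j$ in the unit sphere of $\kx_2$. A direct calculation shows that the discrete partial derivative $\partial_k h$ equals, up to multiplication by the independent sign $\eps_k$ (which changes no $L_p$ norm), the Rademacher sum $\sum_{j\neq k}b_{kj}\eps_j$, where $b_{kj}$ is the symmetrization of $(a_{ij})$. Since $p\ge2$, Minkowski's inequality passes from $L_p(\ell_2)$ to $\ell_2(L_p)$, and the Khintchine--Kahane inequality gives $\|\partial_k h\|_p\le c_1\sqrt{p}\,\|\partial_k h\|_2$; summing and using $\sum_k\|\partial_k h\|_2^2=2\sum_{i<j}a_{ij}^2=2$ yields $\||\nabla h|\|_p\le c\sqrt{p}$ for all $p\ge2$, hence $\|\nabla h\|_\phi\le c$ by \eqref{pbd}. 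This is precisely the analogue of the estimate $\||\nabla h|\|_{L_p(\ga)}\le c_1\sqrt{p}$ in the proof of Corollary~\ref{cha1}, and the constant $c$ is uniform over the unit sphere of $\kx_2$ since the bound depends only on $\|h\|_2=1$.

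With this in hand the conclusion follows mechanically. The Efraim--Lust-Piquard Poincar\'e inequality for the Walsh system is exactly \eqref{poin} for the number operator, and via the implication \eqref{poin}$\Rightarrow$\eqref{exp2} it makes Theorem~\ref{tran} applicable; rescaling $h$ by $c$ then gives $\lge f,h\rge=\int hf\,d\pz-\int h\,d\pz\le c\,Q_{\phi}(f,E_{\Fix}f)\le C'\max\{\sqrt{\Ent(f)},\Ent(f)\}$, which is the second inequality. For $\kx_1$ the gradient is even better behaved: if $h=\sum_i a_i\eps_i$ with $\sum_i a_i^2=1$ then $|\nabla h|\equiv1$, so $\|\Ga_N(h,h)^{1/2}\|_\8\le1$ and the sharper $L_\8$-type hypothesis of \eqref{wse2} applies directly, giving $\|P_{\kx_1}f\|_2\le Q_1(f,E_{\Fix}f)\le C\sqrt{\Ent(f)}$.

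I do not anticipate a serious obstacle, since the paper already records the Walsh-system Poincar\'e inequality and the abstract transportation machinery of Theorem~\ref{tran} and \eqref{wse2}. The one place demanding care is the verification that each $\partial_k h$ is, in $L_p$ norm, a genuine Rademacher sum, so that Khintchine--Kahane may be applied; once that is checked, the argument is a verbatim translation of Corollary~\ref{cha1}.
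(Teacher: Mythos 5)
Your proposal is correct and follows essentially the same route as the paper, which proves this corollary simply by invoking the Efraim--Lust-Piquard Poincar\'e inequality for the Walsh system and repeating the argument of Corollary~\ref{cha1} verbatim (Khintchine--Kahane bound $\||\nabla h|\|_p\le c\sqrt p$ on the unit sphere of the chaos, then Theorem~\ref{tran} for order $2$ and \eqref{wse2} for order $1$). Your added observations --- that $\eps_i^2=1$ removes the diagonal terms and that the discrete partial derivatives are, up to a unimodular factor, genuine Rademacher sums --- are exactly the routine verifications the paper leaves implicit.
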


\begin{rem} Talagrand showed in \cite{Ta} that for Gaussian measure $\ga$ on $\rz^{\nz}$,
\begin{equation}\label{w2in}
  W_2(\ga,\mu)\le \sqrt{2cD(\mu||\ga)}
\end{equation}
for any probability measure $\mu$ absolutely continuous with respect to $\ga$ with $c=1$. An alternative proof was given in \cite{BG}. Otto--Villani \cite{OV} proved \eqref{w2in} in general Riemannian setting under the assumption of logarithmic Sobolev inequality. A simplified proof was given by Bobkov--Gentil--Ledoux in \cite{BGL}, still using LSI. It would be interesting to compare $Q_1$ and $Q_\phi$ with $W_1$ and $W_2$ in this commutative setting. Obviously, $Q_\phi\ge Q_1$, and in the Gaussian setting $Q_\phi$ is no less than $W_2$, because Talagrand's transportation cost inequality is sharp in this case and $Q_\phi$ is of the same order as $\Ent$ if the entropy is large. But the relationship between $Q_\phi$ and $W_2$ is not clear to us in general.

The interesting fact in \eqref{teqn} is that a phase transition may happen, which is different from Bobkov--G\"otze and Talagrand's inequality. As observed in \cite{JZ}*{Example 3.11}, LSI may not hold for the non-diffusion semigroups. Especially for the more general noncommutative setting, an estimate like \eqref{teqn} seems desirable without knowing further information such as LSI. In summary, the family of $(p,p)$ Poincar\'e inequalities indeed gives more information in transportation type inequalities compared with the $(p,\8)$ Poincar\'e inequalities obtained in \cite{JZ}. It is also reasonably effective in deducing transportation type inequalities compared with LSI.
\end{rem}

\section{New quantum metric spaces}
Compact quantum metric spaces are first introduced by Rieffel \cites{Ri1,Ri2}. Let $\ax$ be a $C^*$-algebra and $\bx$ a unital dense subalgebra of $\ax$. Let  $\opnorm{\cdot}$ be a seminorm satisfying the Leibniz condition
\begin{equation}\label{leib}
  \opnorm{ab}\le \opnorm{a}~\|b\|_\ax+\|a\|_\ax \opnorm{b}
\end{equation}
and
\begin{equation}\label{vani}
  \opnorm{a}=0 \mbox{ if and only if } a=c 1_\ax \mbox{ for some scalar } c.
\end{equation}
Then the triple $(\ax,\bx,\opnorm{\cdot})$ is called a \emph{compact quantum metric space} if the distance $d(\phi,\psi)=\sup\{|\phi(a)-\psi(a)|:\opnorm{a}\le 1\}$ induces the weak* topology on the state space $S(\ax)$. Ozawa and Rieffel made the following crucial observation in \cite{OR}.
\begin{lemma}\label{cqms}
  Suppose
  \[
  \{a\in \bx: \opnorm{a}\le 1, \si(a)=0\}
  \]
  is relatively compact in $\ax$ for some state $\si$. Then $(\ax,\bx,\opnorm{\cdot})$ is a compact quantum metric space.
\end{lemma}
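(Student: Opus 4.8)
The plan is to verify that the metric $d(\phi,\psi)=\sup\{|\phi(a)-\psi(a)|:\opnorm{a}\le 1\}$ induces the weak$*$ topology on the state space $S(\ax)$. Since $S(\ax)$ is weak$*$ compact by Banach--Alaoglu, and since I will check that $d$ is a genuine metric (so that $(S(\ax),d)$ is Hausdorff), it suffices to show that the identity map from $S(\ax)$ with its weak$*$ topology to $S(\ax)$ with the $d$-topology is continuous: a continuous bijection from a compact space onto a Hausdorff space is a homeomorphism, which forces the two topologies to coincide. Thus the work reduces to two points, that $d$ is a finite metric and that weak$*$ convergence implies $d$-convergence.

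For the first point I would normalize using the state $\si$ from the hypothesis. Given any $a$ with $\opnorm{a}\le 1$, put $a'=a-\si(a)1_\ax$. By \eqref{vani} the seminorm vanishes on scalars, hence it is unchanged by adding a scalar multiple of $1_\ax$, so $\opnorm{a'}=\opnorm{a}\le 1$ and $\si(a')=0$, while $\phi(a)-\psi(a)=\phi(a')-\psi(a')$ because states are unital. Hence
\[
d(\phi,\psi)=\sup\{|\phi(a)-\psi(a)|:a\in D_\si\},\qquad D_\si=\{a\in\bx:\opnorm{a}\le 1,\ \si(a)=0\}.
\]
As $D_\si$ is relatively compact it is norm-bounded by some $R$, so $d(\phi,\psi)\le 2R<\8$. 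Symmetry and the triangle inequality are clear, and $d(\phi,\psi)=0$ forces $\phi=\psi$ on all of $\bx$ (scale by $\opnorm{a}$ when $\opnorm{a}>0$, and invoke \eqref{vani} together with unitality when $\opnorm{a}=0$), hence on $\ax$ by density; so $d$ is a metric. The reverse inclusion of topologies is automatic and not strictly needed, since $d(\phi_\lambda,\phi)\to 0$ gives $|\phi_\lambda(a)-\phi(a)|\le\opnorm{a}\,d(\phi_\lambda,\phi)\to 0$ for every $a\in\bx$ and hence weak$*$ convergence by density and uniform boundedness of states.

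The crux, and the only place the hypothesis is used in full, is the continuity of the identity from the weak$*$ to the $d$-topology: a weak$*$ convergent net $\phi_\lambda\to\phi$ must satisfy $d(\phi_\lambda,\phi)=\sup_{a\in D_\si}|\phi_\lambda(a)-\phi(a)|\to 0$. Here I would use total boundedness of $D_\si$. Fixing $\eps>0$, choose a finite $\eps/3$-net $a_1,\dots,a_N\in D_\si$ for the norm of $\ax$. For arbitrary $a\in D_\si$ take $a_i$ with $\|a-a_i\|<\eps/3$; since every state has norm one,
\[
|\phi_\lambda(a)-\phi(a)|\le |\phi_\lambda(a-a_i)|+|\phi_\lambda(a_i)-\phi(a_i)|+|\phi(a_i-a)|\le \tfrac{2\eps}{3}+|\phi_\lambda(a_i)-\phi(a_i)|.
\]
Weak$*$ convergence yields $\phi_\lambda(a_i)\to\phi(a_i)$ for each of the finitely many $i$, so there is $\lambda_0$ with $\max_i|\phi_\lambda(a_i)-\phi(a_i)|<\eps/3$ for $\lambda\ge\lambda_0$; then $|\phi_\lambda(a)-\phi(a)|<\eps$ uniformly over $a\in D_\si$, i.e.\ $d(\phi_\lambda,\phi)\le\eps$. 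This gives the continuity, and the compact-to-Hausdorff principle finishes the proof. The main obstacle is exactly this last step, namely upgrading pointwise weak$*$ control to control that is uniform over the infinite-dimensional ball $D_\si$, which is precisely what relative compactness provides.
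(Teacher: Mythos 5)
The paper does not actually prove this lemma: it is quoted as an observation of Ozawa--Rieffel \cite{OR}, so there is no internal proof to compare against. Your argument is correct and is essentially the standard one behind that citation: translating $a\mapsto a-\si(a)1_\ax$ (harmless by \eqref{vani}, the seminorm property, and unitality of states) reduces the supremum defining $d$ to the relatively compact set $D_\si$, whose total boundedness upgrades pointwise weak$^*$ convergence to convergence uniform over $D_\si$ via a finite net and $\|\phi_\lambda\|=1$, after which the compact-to-Hausdorff principle identifies the two topologies; the side checks (finiteness and separation of $d$, weak$^*$ compactness of $S(\ax)$) are all in place.
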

It turns out that the norm $\|\Ga_A(f,f)^{1/2}\|_\phi$ we used in the transportation type inequality gives rise to new quantum metric spaces. Here $A$ is the generator of a general standard semigroup acting on a noncommutative probability space. Define
$$\opnorm{a}=\max\{\|\Ga_A(a,a)^{1/2}\|_\phi, \|\Ga_A(a^*,a^*)^{1/2}\|_\phi\}$$
for $a\in \bx$. We remark that $\opnorm{\cdot}$ does not satisfy the condition \eqref{vani} in general. Let $C_r^*(G)$ denote the reduced group $C^*$-algebra of $G$ and $\cz(G)$ the subset of finitely supported elements. In our examples later, $\ax=C^*(G)$ and $\bx=\cz(G)$.
\begin{lemma}\label{lip}
  $\opnorm{\cdot}$ satisfies the Leibniz condition.
\end{lemma}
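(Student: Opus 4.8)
The plan is to realize the gradient form through a derivation and reduce the Leibniz estimate to the multiplicative behaviour of conditional column and row $L_\phi$-seminorms. In the cases at hand the form is represented as $\Ga_A(x,y)=E(\de(x)^*\de(y))$, where $\de:\nx\to\td\nx$ is the (closable) derivation associated with $A$ into a larger tracial von Neumann algebra $\td\nx\supset\nx$ and $E:\td\nx\to\nx$ is the trace-preserving conditional expectation; for $A$ the generator of the $\psi$-semigroup this is exactly the Gaussian derivation $\de_\psi$ together with $E=E_{\lx(G)}$, as already used in \eqref{khgp} (for a general standard semigroup one invokes the Cipriani--Sauvageot tangent bimodule, whose axioms give the same estimates below). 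The cocycle law yields $\de_\psi(x^*)=-\de_\psi(x)^*$, but the two signs cancel and one still gets $\Ga_A(x^*,x^*)=E(\de(x)\de(x)^*)$. Hence, writing $\|\xi\|_{c,\phi}:=\|E(\xi^*\xi)^{1/2}\|_\phi$ and $\|\xi\|_{r,\phi}:=\|E(\xi\xi^*)^{1/2}\|_\phi$, we have $\|\Ga_A(x,x)^{1/2}\|_\phi=\|\de(x)\|_{c,\phi}$ and $\|\Ga_A(x^*,x^*)^{1/2}\|_\phi=\|\de(x)\|_{r,\phi}$, so that $\opnorm{x}=\max\{\|\de(x)\|_{c,\phi},\|\de(x)\|_{r,\phi}\}$.

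First I would record the one-sided module bounds, all of which follow from Proposition \ref{orli}. For $a,b\in\nx$ and $\xi\in\td\nx$, bimodularity of $E$ gives $E((\xi b)^*(\xi b))=b^*E(\xi^*\xi)b$, whence $\|\xi b\|_{c,\phi}=\|(b^*E(\xi^*\xi)b)^{1/2}\|_\phi=\|E(\xi^*\xi)^{1/2}b\|_\phi\le\|\xi\|_{c,\phi}\|b\|_\8$, using parts (1) and the multiplier bound of Proposition \ref{orli}. For the left action, $\xi^*a^*a\xi\le\|a\|_\8^2\,\xi^*\xi$ and positivity of $E$ give $E(\xi^*a^*a\xi)\le\|a\|_\8^2E(\xi^*\xi)$; operator monotonicity of $t\mapsto\sqrt t$ together with Proposition \ref{orli}(2) then yield $\|a\xi\|_{c,\phi}\le\|a\|_\8\|\xi\|_{c,\phi}$. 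Applying these two estimates to $\xi^*$ and using Proposition \ref{orli}(1) gives the corresponding bounds for $\|\cdot\|_{r,\phi}$.

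Finally, the derivation identity $\de(ab)=\de(a)b+a\de(b)$ and the triangle inequality for $\|\cdot\|_{c,\phi}$ and $\|\cdot\|_{r,\phi}$ combine, for $\bullet\in\{c,r\}$, to give
\[
\|\de(ab)\|_{\bullet,\phi}\le\|\de(a)b\|_{\bullet,\phi}+\|a\de(b)\|_{\bullet,\phi}\le\|b\|_\8\|\de(a)\|_{\bullet,\phi}+\|a\|_\8\|\de(b)\|_{\bullet,\phi}\le\|b\|_\8\opnorm{a}+\|a\|_\8\opnorm{b}.
\]
Taking the maximum over $\bullet\in\{c,r\}$ produces $\opnorm{ab}\le\opnorm{a}\|b\|_\8+\|a\|_\8\opnorm{b}$, which is precisely \eqref{leib}. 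The main obstacle is the triangle inequality for the conditional column and row $L_\phi$-seminorms \emph{with the sharp constant} $1$: the crude operator bound $E((\xi+\eta)^*(\xi+\eta))\le 2E(\xi^*\xi)+2E(\eta^*\eta)$ only produces a constant larger than $1$, so one must instead use that $\|\cdot\|_{c,\phi}$ and $\|\cdot\|_{r,\phi}$ are genuine norms on the associated conditional noncommutative symmetric spaces, i.e. the Orlicz analogues of the spaces $L_p^c(E),L_p^r(E)$ of Section \ref{prel}. I would establish this by duality against the complementary Orlicz space $L_{\phi^*}$, or by citing the general theory of conditional symmetric operator spaces; once it is in hand, the module bounds above and the derivation rule make the rest routine.
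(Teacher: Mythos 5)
Your proposal follows the same skeleton as the paper's proof --- realize $\Ga_A$ through a derivation, extract the one-sided module bounds from Proposition \ref{orli}, and reduce \eqref{leib} to a triangle inequality for a column/row Orlicz seminorm --- and your module bounds, the sign computation giving $\Ga_A(x^*,x^*)=E(\de(x)\de(x)^*)$, and the final assembly are all correct. The difference lies in how the two key ingredients are produced. The paper does not presuppose a factorization $\Ga_A(x,y)=E(\de(x)^*\de(y))$ through a conditional expectation between von Neumann algebras; it builds a Hilbert $\lx(G)$-module $H_\Ga$ by the KSGNS construction directly from the form, takes $\de(a)=a\otimes 1-1\otimes a$ there, and settles the triangle inequality by Kasparov's absorption theorem: $H_\Ga$ embeds as a right module, preserving inner products, into the column space $C(\lx(G))\subset B(\ell_2)\overline\otimes\lx(G)$, where $\|\lge\xi,\xi\rge^{1/2}\|_\phi$ becomes the restriction of the genuine Banach norm of $L_\phi(B(\ell_2)\overline\otimes\lx(G))$, so the constant-$1$ triangle inequality comes for free. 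Your concrete route through $\de_\psi$ and $E_{\lx(G)}$ is arguably more elementary in the group/Gaussian case, but two points need care. First, for a general standard semigroup the Cipriani--Sauvageot tangent module is only a Hilbert bimodule, not a von Neumann algebra carrying a trace-preserving conditional expectation onto $\nx$, so in that generality you are pushed back to the Hilbert-module picture and essentially to the Kasparov argument anyway. Second, the step you defer is precisely the crux of the lemma, and the duality route has a normalization trap: the Luxemburg norm $\|\cdot\|_\phi$ is the supremum of pairings against the \emph{Amemiya (Orlicz)} unit ball of the complementary space, not its Luxemburg ball; using the wrong ball loses a factor of up to $2$, which cannot be rescaled away in a Leibniz inequality. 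With the correct dual ball the argument does close: writing $\td\phi(t)=\phi(\sqrt t)=e^t-1$, one has $\|E(\xi^*\xi)^{1/2}\|_\phi^2=\|E(\xi^*\xi)\|_{\td\phi}=\sup_b\|\xi b^{1/2}\|_2^2$ with $b\ge 0$ ranging over the appropriate dual ball in $\nx$, and Minkowski's inequality in $L_2$ then yields the triangle inequality with constant $1$. So the proposal is sound and salvageable, but as written its decisive step is only sketched, whereas the paper's embedding into $C(\lx(G))$ disposes of it in one stroke.
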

\begin{proof}
The norm $\|\Ga_A(a,a)^{1/2}\|_\8$ was considered in \cite{JMe} to give examples of compact quantum metric spaces. The proof here is similar to \cite{JMe}*{Lemma 1.2.1} but more technical. We will need the theory of Hilbert $C^*$-modules. Our general reference is \cite{Lan95}. Roughly speaking, the idea is to connect $\Ga_A$-form with a derivation taking values in a certain Hilbert $C^*$-module to get the ``product rule'', and relate the module to a column space of a von Neumann algebra to get the triangle inequality.

Let us first recall that the norm of an element $x$ in a Hilbert $C^*$-module $H$ is given by $\|x\|=\||x|\|$ and $|x|=\lge x,x\rge^{1/2}$ belongs to the underlying $C^*$-algebra. In particular, if the $C^*$-algebra is the group von Neumann algebra $\lx(G)$ with the canonical trace $\tau$, we can construct the Orlicz space $L_\phi(\lx(G))$ and introduce the notation $\|x\|_{L_\phi(H, \tau)}:=\||x|\|_{L_\phi(\lx(G),\tau)}$ for $x\in H$.

Consider the KSGNS construction of Hilbert $\lx(G)$-module. For $\sum_{i=1}^n a_i\otimes x_i, \sum_{j=1}^m b_j\otimes y_j$ in the algebraic tensor product $\cz(G)\otimes \lx(G)$, define
\begin{equation}\label{inpr}
  \lge \sum_{i=1}^n a_i\otimes x_i, \sum_{j=1}^m b_j\otimes y_j\rge_{\lx(G)}=\sum_{i=1}^n\sum_{j=1}^m x_i\Ga_A(a_i,b_j)y_j.
\end{equation}
Denote by $H_\Ga$ the (norm) completion of the quotient of $\cz(G)\otimes \lx(G)$ by the kernel of $\lge\cdot, \cdot\rge_{\lx(G)}$. Then $H_\Ga$ is  a Hilbert $\lx(G)$-module with a natural right action from $\lx(G)$. For $a\in \cz(G)$, we define the left action
\begin{equation}\label{lact}
  \pi(a)\Big(\sum_j^m b_j\dot\otimes y_j\Big)= \sum_{j=1}^m(ab_j)\dot\otimes y_j.
\end{equation}
Here $b\dot\otimes y$ denotes the equivalent class of $b\otimes y$ in $H_\Ga$. With the left action \eqref{lact}, $H_\Ga$ becomes a normed left $\cz(G)$-module with
\begin{equation*}
  \Big\|\pi(a)\Big(\sum_j^m b_j\dot\otimes y_j\Big)\Big\|\le \|a\|\Big\|\sum_j^m b_j\dot\otimes y_j\Big\|.
\end{equation*}
It follows that $\pi(a)$ extends to an element of $L(H_\Ga)$, where $L(H_\Ga)$ denotes the adjointable maps of $H_\Ga$. It is also clear from the definition that $\pi$ extends to a unital $*$-homomorphism $\pi: C_r^*(G)\to L(H_\Ga)$. In this way, $H_\Ga$ becomes a  $C^*_r\dash\lx(G)$ bimodule. In what follows, we do not write $\pi$ explicitly. Since $\cz(G)$ is countable, $H_\Ga$ is countably generated. We deduce from Kasparov's absorption theorem (\cite{Lan95}*{Theorem 6.2}) that there exists a unitary right module map $u: H_\Ga\oplus C(\lx(G))\to C(\lx(G))$. Here $C(\lx(G))=\ell_2\otimes \lx(G)$ is the Hilbert $\lx(G)$-module formed by the separable Hilbert space $\ell_2$ and $\lx(G)$. As observed in \cite{PX}, $C(\lx(G))$ is a column subspace of $B(\ell_2)\overline\otimes \lx(G)$, on which there is a natural semifinite trace $tr\otimes\tau$, where $tr$ is the usual trace on $B(\ell_2)$. Consider the Orlicz space $L_\phi(B(\ell_2)\overline\otimes \lx(G))$. Then the norm $\|\cdot\|_{L_\phi(tr\otimes\tau)}$ restricted to $C(\lx(G))$ gives a norm
\begin{equation}\label{bana}
  \|(a_k)\|_{L_\phi(tr\otimes \tau)}= \|(\sum_{k}|a_k|^2)^{1/2}\|_{L_\phi(\tau)} = \|\lge (a_k),(a_k)\rge^{1/2} \|_{L_\phi(\tau)}
\end{equation}
for $(a_k)\in C(\lx(G))$, where the inner product is from the Hilbert $\lx(G)$-module structure of $C(\lx(G))$. Define $v: H_\Ga\to C(\lx(G))$ to be the restriction of $u$ on the first coordinate of $u$. Then $v$ is an injective right module map which preserves the inner product of $H_\Ga$. Since by definition $\|\xi\|_{L_\phi(H_\Ga,\tau)}=\||\xi|\|_{L_\phi(\lx(G),\tau)}$,  \eqref{bana} implies that
\begin{equation}\label{isom}
\|\xi\|_{L_\phi(H_\Ga,\tau)}=\|\lge\xi,\xi\rge^{1/2}\|_{L_\phi(\lx(G),\tau)} =\|\lge v(\xi),v(\xi)\rge^{1/2}\|_{L_\phi(\lx(G),\tau)} =\|v(\xi)\|_{L_\phi(tr\otimes \tau)}
\end{equation}
for $\xi\in H_\Ga$.

Now we define $\de(a)=a\otimes 1-1\otimes a$ for $a\in\cz(G)$. $\de$ takes values in a Hilbert submodule $H_0$ of $H_\Ga$, which can be obtained by running the same KSGNS construction for $\{\sum_i a_i\otimes x_i:\sum_i a_ix_i=0, a_i\in \cz(G),x_i\in \lx(G)\}$ as in \eqref{inpr} and \eqref{lact}. In fact, $H_0$ was considered in \cite{Pet} with completion in a different topology, but $\de$ is still a derivation and by \eqref{inpr} $\lge \de(a),\de(a)\rge =\Ga_A(a,a)$, as checked in \cite{JMe}. For $a,b\in\cz(G)$ and $\xi\in H_\Ga$, we have by \eqref{lact} and \cite{Lan95}*{Proposition 1.2} that $\lge a\xi,a\xi\rge=\lge \pi(a)\xi, \pi(a)\xi\rge \le \|a\|_\8^2 |\xi|^2$ and thus
\begin{equation}\label{ine1}
  \|a\xi\|_{L_\phi(H_\Ga,\tau)}\le \|a\|_\8 \|\xi\|_{L_\phi(H_\Ga,\tau)}.
\end{equation}
Write the diagonal matrix with $b$ on the diagonal as $id \otimes b\in B(\ell_2)\overline\otimes \lx(G)$. Since $v$ is a right module map, using \eqref{isom} and Proposition \ref{orli}, we have
\begin{equation}\label{ine2}
  \begin{split}
  \|\xi b\|_{L_\phi(H_\Ga, \tau)}&=\|v(\xi)b\|_{L_\phi(tr\otimes \tau)}=\|v(\xi)(id \otimes b)\|_{L_\phi(tr\otimes \tau)} \\ &\le\|v(\xi)\|_{L_\phi(tr\otimes \tau)}\|id\otimes b\|_\8=\|\xi \|_{L_\phi(H_\Ga, \tau)}\|b\|_\8.
\end{split}
\end{equation}
By \eqref{isom}, \eqref{ine1}, \eqref{ine2} and the triangle inequality of $C(\lx(G))$, we have
\begin{align*}
  \|\Ga_A(ab,ab)^{1/2}\|_{L_\phi(\lx(G),\tau)}&=\|\lge \de(ab),\de(ab)\rge^{1/2}\|_{L_\phi(\lx(G),\tau)} =\|\de(ab)\|_{L_\phi(H_\Ga,\tau)}\\ &=\|v(\de(ab))\|_{L_\phi(tr\otimes\tau)} =\|v(a\de(b))+v(\de(a)b)\|_{L_\phi(tr\otimes\tau)}\\
  &\le \|v(a\de(b))\|_{L_\phi(tr\otimes\tau)} +\|v(\de(a)b)\|_{L_\phi(tr\otimes\tau)}\\
  &= \| a\de(b)\|_{L_\phi(H_\Ga,\tau)}+\|\de(a)b\|_{L_\phi(H_\Ga, \tau)}\\
  &\le \|a\|_\8\|\de(b)\|_{L_\phi(H_\Ga,\tau)}+\|\de(a)\|_{L_\phi(H_\Ga,\tau)} \|b\|_\8.
\end{align*}
We have shown that $\|\Ga_A(ab,ab)^{1/2}\|_\phi\le \|a\|_\8\|\Ga_A(b,b)^{1/2}\|_\phi + \|\Ga_A(a,a)^{1/2}\|_\phi \|b\|_\8$. Similarly, we have $\|\Ga_A(b^*a^*,b^*a^*)^{1/2}\|_\phi\le \|a^*\|_\8\|\Ga_A(b^*,b^*)^{1/2}\|_\phi + \|\Ga_A(a^*,a^*)^{1/2}\|_\phi \|b^*\|_\8$. Therefore, $\opnorm{\cdot}$ satisfies the Leibniz condition.
\end{proof}

Let us recall some notation. $b_\psi: G\to H_\psi$ is the $1$-cocycle associated to the cn-length function $\psi$. $T_t=e^{-t A}$ where $T_t \la(g)=e^{-t\psi(g)}\la(g)$. A finitely generated group $G$ is said to have rapid $s$-decay if $\|x\|_\8\le C_s (1+k)^s\|x\|_2$ for every $x=\sum_{|g|=k}a_g\la(g)$ and some $s<\8$ where $|g|$ is the length of the reduced word $g$ with respect to fixed generators. $L_p^0(\nx)$ denotes the complemented subspace of $L_p(\nx)$ of elements for which $\lim_{t\to \8} T_t x = 0$. For instance, $x-E_{\rm Fix} x\in L_p^0$ for $x\in L_p$.

Using the theory of noncommutative Riesz transform and completely bounded multipliers, it was shown in \cite{JMe}*{Theorem 1.2.3} and \cite{JMP}*{Corollary 5.12} that $(C_r^*(G),\cz(G), \|\cdot\|_\Ga)$ is a compact quantum metric space under certain conditions where
$$\|f\|_\Ga= \max\{\|\Ga_\psi(f,f)^{1/2}\|_\8, \|\Ga_\psi(f^*,f^*)^{1/2}\|_\8\}.$$
In fact, once we notice the fact that $\|\Ga_\psi(f,f)^{1/2}\|_p\le C_p\| \Ga_\psi(f,f)^{1/2} \|_\phi \le C_p'\|\Ga_\psi(f,f)^{1/2}\|_\8$, the same proofs also work if we replace $\|\cdot\|_\Ga$ by $\opnorm{\cdot}$ defined above. We add a sketch of proof for completeness. The interested readers are referred to \cite{JMe}*{Corollary 2} and \cite{JMP}*{Corollary 5.12} for details. We remark that by the proof of \cite{JMP}*{Corollary 5.12}, the condition \eqref{vani} in this context is equivalent to the condition $\ker{\psi}=\{e\}$, where $e$ is the identity element of $G$.
\begin{cor}\label{qms}
  $(C_r^*(G),\cz(G),\opnorm{\cdot})$ is a compact quantum metric space provided one of the following holds:
  \begin{enumerate}
    \item $G$ is finitely generated with rapid $s$-decay and $\inf_{|g|=k}\psi(g)\ge c_\al(1+k)^\al$ for some $\al>0$.
    \item ${\rm dim} H_\psi <\8$, the kernel of $\psi$ is $\{e\}$ and $\inf_{b_\psi(g)\neq 0}\psi(g)>0$.
  \end{enumerate}
\end{cor}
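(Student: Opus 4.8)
The plan is to invoke the Ozawa--Rieffel criterion, Lemma~\ref{cqms}, taking for $\si$ the canonical tracial state $\tau$ on $C_r^*(G)$. Lemma~\ref{lip} already furnishes the Leibniz condition, so the whole matter reduces to proving that
\[
B_\tau=\{a\in\cz(G):\opnorm{a}\le 1,\ \tau(a)=0\}
\]
is relatively compact in $C_r^*(G)$. The engine of the argument is that $\opnorm{\cdot}$, although defined through the weaker norm $\|\cdot\|_\phi$, still dominates every $L_p$ norm of the gradient form: by \eqref{pbd}, for $2\le p<\8$ one has $\|\Ga_\psi(f,f)^{1/2}\|_p\le C_p\|\Ga_\psi(f,f)^{1/2}\|_\phi\le C_p\opnorm{f}$, and similarly for $f^*$. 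The proofs that $(C_r^*(G),\cz(G),\|\cdot\|_\Ga)$ is a compact quantum metric space in \cite{JMe}*{Theorem 1.2.3} and \cite{JMP}*{Corollary 5.12} rely only on such $L_p$ bounds on $\Ga_\psi$ (entering through the boundedness of the noncommutative Riesz transform $\de_\psi A^{-1/2}$ and of completely bounded Fourier multipliers), never on a genuine operator-norm control. I would therefore rerun those arguments, replacing the hypothesis $\|a\|_\Ga\le 1$ by $\opnorm{a}\le 1$ and absorbing the factors $C_p$ into the constants.

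For case (1) it is instructive to see the mechanism already at the $L_2$ level, where the geometry is transparent. The integration-by-parts identity $\tau(\Ga_\psi(f,f))=\tau(f^*Af)$, together with $A\la(g)=\psi(g)\la(g)$, gives $\tau(\Ga_\psi(f,f))=\sum_g\psi(g)|\hat f(g)|^2$, so $\opnorm{f}\le 1$ forces $\sum_g\psi(g)|\hat f(g)|^2\le C_2^2$. The lower bound $\psi(g)\ge c_\al(1+|g|)^\al$ then makes the $L_2$ mass of $f$ supported on words of length $>N$ tend to $0$ uniformly over $B_\tau$ as $N\to\8$. Rapid $s$-decay, which bounds the $C_r^*$ norm of the length-$k$ layer by $C_s(1+k)^s$ times its $L_2$ norm, upgrades this to uniform smallness of the high-frequency tail in $C_r^*(G)$, exactly as in \cite{JMe}*{Corollary 2}. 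Hence $B_\tau$ is uniformly approximated by its finitely supported truncations, so it is totally bounded and relatively compact.

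For case (2) the finite dimension of the cocycle space does the work. With $\dim H_\psi=N<\8$, the $1$-cocycle $b_\psi:G\to H_\psi\cong\rz^N$, combined with $\ker\psi=\{e\}$ and $\inf_{b_\psi(g)\neq 0}\psi(g)>0$, transports the estimates to $\rz^N$, where the Riesz transform and H\"ormander--Mikhlin type multiplier bounds used in \cite{JMP}*{Corollary 5.12} are again purely $L_p$ statements. The same substitution of $\opnorm{\cdot}$ for $\|\cdot\|_\Ga$ then yields relative compactness of $B_\tau$.

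I expect the main obstacle to be precisely the bookkeeping behind the first paragraph: one must trace through the cited compactness proofs to confirm that no step secretly needs the operator norm of $\Ga_\psi$, so that the $L_\phi$-based seminorm genuinely suffices, and check that the $\sqrt p$ growth in $C_p$ is harmless (in each compactness estimate $p$ may be fixed to a finite value). Finally, I would note that the vanishing axiom \eqref{vani} requires no separate verification: relative compactness of $B_\tau$ already forces it, and under both hypotheses $\ker\psi=\{e\}$ holds---automatically in (1), since $|g|\ge 1$ for $g\neq e$ gives $\psi(g)\ge c_\al 2^\al>0$---which is exactly the condition equivalent to \eqref{vani} in this setting.
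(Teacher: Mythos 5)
Your overall plan is exactly the paper's proof: Ozawa--Rieffel (Lemma \ref{cqms}) with $\si=\tau$, the Leibniz property from Lemma \ref{lip}, and the single new observation that $\|\Ga_\psi(f,f)^{1/2}\|_p\le C_p\|\Ga_\psi(f,f)^{1/2}\|_\phi\le C_p\opnorm{f}$ for each fixed finite $p$, which lets the compactness arguments of \cite{JMe}*{Theorem 1.2.3} and \cite{JMP}*{Corollary 5.12} run with $\opnorm{\cdot}$ in place of $\|\cdot\|_\Ga$. One caveat: your ``instructive'' $L_2$-level mechanism for case (1) is not actually an adequate stand-in for those arguments. From $\sum_g\psi(g)|\hat f(g)|^2\le C^2$ and rapid $s$-decay alone, the operator-norm tail is controlled by $\sum_{k>N}(1+k)^{s-\al/2}b_k$ with $(b_k)$ only in $\ell_2$, and Cauchy--Schwarz then needs $\al>2s+1$ to be summable, whereas the hypothesis gives only $\al>0$. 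This is precisely why the paper does not argue at $p=2$: it uses \cite{JMe}*{Lemma 1.3.1} to get the decay $\|T_t:L_2^0\to\lx(G)\|\le C t^{-n/4}$, deduces that $A^{-1/2}:L_p^0\to L_\8^0$ is compact once $p>n(s,\al)/2$, and only then applies the $L_p$ Riesz transform bound $\|A^{1/2}x\|_p\le C(p)\max\{\|\Ga_\psi(x,x)^{1/2}\|_p,\|\Ga_\psi(x^*,x^*)^{1/2}\|_p\}\le C'(p)\opnorm{x}$. Since your first paragraph already commits to rerunning the cited $L_p$ arguments and to fixing $p$ at a finite value, the proposal is sound as a whole; just do not let the $L_2$ computation carry case (1) on its own. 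Your closing remarks on \eqref{vani} being forced by relative compactness and on $\ker\psi=\{e\}$ agree with the paper.
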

\begin{proof}[Sketch of proof]
The assumption in (1) implies that $\ker(\psi)=\{e\}$. By \cite{JMe}*{Lemma 1.3.1}, it also implies
  \[
  \|T_t: L_2^0(\lx(G))\to \lx(G)\|\le C(s,\al)t^{-n(s,\al)/4}
   \]
   for some $n>0$ and $A^{-1}$ is compact on $L_2^0(\lx(G))$. Then by \cite{JMe}*{Theorem 1.1.7}, $A^{-z}: L_p^0\to L_\8^0$ is compact for $z>n/(4p)$. In particular,
   \[
   \{x\in L_\8^0(\lx(G)):\|A^{z}x\|_p\le 1 \}\subset L_\8^0(\lx(G))
   \]
   is relatively compact in $\lx(G)$ for $z>n/(4p)$. Then by the boundedness of noncommutative Riesz transform \cite{JMe}*{Theorem 2.5.13},
   \begin{align*}
        \|A^{1/2}x\|_p &\le C(p)\max\{\|\Ga_\psi(x,x)^{1/2}\|_p,\|\Ga_\psi(x^*,x^*)^{1/2}\|_p\}\\
        &\le C'(p)\max\{\|\Ga_\psi(x,x)^{1/2}\|_\phi, \|\Ga_\psi(x^*,x^*)^{1/2}\|_\phi\}.
   \end{align*}
    for $p\ge 2$. Choosing $p>n/2$ (e.g., $p=n$ and $z=1/2$), Lemma \ref{cqms} and Lemma \ref{lip} yield (1).

  The argument for (2) is the same as that of \cite{JMP}*{Corollary 5.12}. The assumption implies $A^{-1}:L_2^0(\lx(G))\to L_2^0(\lx(G))$ is a compact operator. Combining the theory of completely bounded multipliers and some abstract semigroup theory, it can be shown that $A^{-1/2}: L_p^0(\lx(G))\to L_\8^0(\lx(G))$ is also compact for large $p$. Then by the same argument as for (1), $\{x\in L_\8(\lx(G)): \opnorm{x}\le 1, \tau(x)=0\}$ is relatively compact in $\lx(G)$ and the proof is complete, by Lemma \ref{cqms}.
\end{proof}
\begin{rem}
\begin{enumerate}
  \item The proof of Corollary \ref{qms} shows that any seminorm of the form $\|f\|_{\Ga,\al}=\max\{\|\Ga_A(f,f)^{1/2}\|_\al, \|\Ga_A(f^*,f^*)^{1/2}\|_\al\}$, which satisfies \eqref{leib}, \eqref{vani} and $\|x\|_p\le C(\al)\|x\|_\al$, will provide new examples of compact quantum metric spaces.
  \item From noncommutative geometric point of view, \eqref{wse2} and \eqref{teqn} may be regarded as transportation cost inequalities for quantum metric spaces.
\end{enumerate}
\end{rem}

\section*{Acknowledgements}
The author would like to thank Marius Junge for many inspiring conversations and suggestions, David Nualart for his comments on the results of this paper. This paper was completed while the author was visiting ICMAT in Spain. He sincerely appreciates the hospitality of the institute. Special thanks go to the anonymous referee for very careful reading and very helpful suggestions on improving both the mathematics and the presentation of the paper, whose opinion has been incorporated in the current version.
\bibliographystyle{plain}
\bibliography{pcr}
\end{document}